\newtheorem*{theorem*}{Theorem}
\newtheorem{theorem}{Theorem}
\newtheorem{proposition}[theorem]{Proposition}
\newtheorem{lemma}[theorem]{Lemma}
\newtheorem{claim}[theorem]{Claim}
\newtheorem{corollary}[theorem]{Corollary}
\newtheorem{conjecture}[theorem]{Conjecture}
\theoremstyle{definition}
\newtheorem{definition}[theorem]{Definition}
\newtheorem{observation}[theorem]{Observation}
\newtheorem{remark}[theorem]{Remark}
\newtheorem{remarks}[theorem]{Remarks}
\newtheorem*{strategy*}{Strategy}
\newcommand{\id}{\ensuremath{\text{id}}}
\newcommand{\R}{\ensuremath{\mathbb{R}}}
\newcommand{\embeds}{\ensuremath{\hookrightarrow}}
\newcommand{\Z}{\ensuremath{\mathbb{Z}}}
\newcommand{\boundary}{\ensuremath{\partial}}
\newcommand{\interior}[1]{\ensuremath{\mathring{#1}}}
\renewcommand{\epsilon}{\ensuremath{\varepsilon}}
\newcommand{\Matrix}[1]{\left(\vcenter{\xymatrix@=0pt{ #1} }\right)}
\newcommand{\ie}{\textit{i.e.},\ }
\newcommand{\Ie}{\textit{I.e.},\ }
\renewcommand{\phi}{\varphi}
\newcommand{\homeq}{\ensuremath{\simeq}}
\newcommand{\sym}{\ensuremath{\mathfrak{S}}}
\renewcommand{\t}[1]{\ensuremath{\widetilde{#1}}}
\newcommand{\incl}{\ensuremath{\text{incl}}}
\newcommand{\iso}{\ensuremath{\cong}}
\newcommand{\define}[1]{\textbf{#1}}
\DeclareMathOperator{\conn}{conn}
\newcommand{\delprod}[2]{\ensuremath{{#1}^{#2}_{\Delta}}}
\newcommand{\delprodthin}[2]{\ensuremath{{#1}^{*#2}_{\delta}}}
\newif\ifcmts
\newcounter{sideremark}
\begin{document}

\begin{center}

\renewcommand{\thefootnote}{\fnsymbol{footnote}}

{\Large Eliminating Higher-Multiplicity Intersections, II.}
\vskip 0.1in
{\Large 
The Deleted Product Criterion in the $r$-Metastable Range}\footnotemark[1]

\vskip 0.15in

\begin{minipage}{0.25\textwidth}
\centering
{\large \sc Isaac Mabillard}
\\
\texttt{\href{mailto:imabillard@ist.ac.at}{imabillard@ist.ac.at}}
\end{minipage}
\hskip 0.1in
and
\hskip 0.1in
\begin{minipage}{0.2\textwidth} 
\centering
{\large \sc Uli Wagner}
\\ \texttt{\href{mailto:uli@ist.ac.at}{uli@ist.ac.at}}
\end{minipage}
\begin{minipage}{0.05\textwidth} 
\mbox{ }
\end{minipage}
\footnotetext[1]{Research supported by the Swiss National Science Foundation (Project SNSF-PP00P2-138948). An extended abstract of this work was presented at the 32nd International 
Symposium on Computational Geometry \cite{MabillardWagner:Elim_II_SoCG-2016}.
We would like to thank Arkadiy Skopenkov as well as the anonymous referees of the extended abstract for many helpful comments.}

\vskip 0.15in

{\it IST Austria, Am Campus 1, 3400 Klosterneuburg, Austria \hskip 0.4in}

\vskip 0.1in

{\today}

\end{center}

\begin{abstract}
Motivated by Tverberg-type problems in topological combinatorics and by classical results about embeddings (maps without double points), 
we study the question whether a finite simplicial complex $K$ can be mapped into $\R^d$ without higher-multiplicity intersections. We focus on 
conditions for the existence of \emph{almost $r$-embeddings}, i.e., maps $f\colon K \to \R^d$ such that $f(\sigma_1)\cap \dots \cap f(\sigma_r) =\emptyset$ 
whenever $\sigma_1,\dots,\sigma_r$ are pairwise disjoint simplices of $K$.

Generalizing the classical Haefliger-Weber embeddability criterion, we show that a well-known necessary \emph{deleted product condition} 
for the existence of almost $r$-embeddings is sufficient in a suitable \emph{$r$-metastable range} of dimensions: If $r d \geq (r+1) \dim K +3,$ then there exists an almost $r$-embedding $K\to \R^d$ if and only if there exists an equivariant map $\delprod{K}{r} \to_{\sym_r} S^{d(r-1)-1}$, where $\delprod{K}{r}$ is the \emph{deleted $r$-fold product} of $K$ (the subcomplex of the $r$-fold cartesian product whose cells are products of pairwise disjoint simplices of $K$) and $\sym_r$ denotes the symmetric group. This significantly extends one of the main results of our previous paper (which treated the special case where $d=rk$ and $\dim K=(r-1)k$ for some $k\geq 3$), and settles one of the main open questions raised there.

As a corollary, our result together with recent work of Filakovsk\'{y} and Vok\v{r}\'{i}nek on the homotopy classification of equivariant maps under non-free actions imply that almost $r$-embeddability of simplicial complexes is algorithmically decidable in the $r$-metastable range (in polynomial time if $r$ and $d$ are fixed).  
\end{abstract}

\vfill
\tableofcontents
\vfill
\mbox{}\clearpage

\section{Introduction}

Let $K$ be a finite simplicial complex, and let $f\colon K\to \R^d$ be a continuous map.\footnote{For simplicity, throughout most of the paper we use the same notation for a simplicial complex $K$ and its \emph{underlying polyhedron}, relying on context to distinguish between the two when necessary.} Given an integer $r\geq 2$, we say that $y\in \R^d$ is an \define{$r$-fold point} or 
\define{$r$-intersection point} of $f$ if it has $r$ pairwise distinct preimages, i.e., if there exist $y_1,\ldots,y_r\in K$ such that $f(y_1)=\ldots=f(y_r)=y$ and $y_i\neq y_j$ for $1\leq i<j\leq r$. 
We will pay particular attention to $r$-fold points that are \define{global}\footnote{In our previous paper \cite{MabillardWagner:TverbergWhitney-2014}, we used the terminology ``\emph{$r$-Tverberg point}'' instead of ``\emph{global $r$-fold point}.''} in the sense that their preimages lie in $r$ \emph{pairwise disjoint simplices} of $K$, i.e., 
$y\in f(\sigma_1)\cap \ldots \cap f(\sigma_r)$, where $\sigma_i\cap \sigma_j=\emptyset$ for $1\leq i<j\leq r$. 

We say that a map $f\colon K\to \R^d$ is an \define{$r$-embedding} if it has no $r$-fold points, and we say that $f$ is an \define{almost $r$-embedding} if it has no \emph{global} $r$-fold points.\footnote{We emphasize that the definitions of global $r$-fold points and of almost $r$-embeddings depend on the actual simplicial complex $K$ (the specific triangulation), not just the underlying polyhedron.}

The most fundamental case $r=2$ is that of \define{embeddings} (=$2$-embeddings), i.e., injective continuous maps $f\colon K\to \R^d$.
Finding conditions for a simplicial complex $K$ to be embeddable into $\R^d$ --- a higher-dimensional generalization of graph planarity --- is a classical problem in topology (see   \cite{RepovsSkopenkov:NewResultsEmbeddingsManifoldsPolyhedra-1999,Skopenkov:EmbeddingKnottingManifoldsEuclideanSpaces-2008} for surveys) and has recently also become the subject of systematic study from a viewpoint of algorithms and computational complexity (see  \cite{MatousekTancerWagner:HardnessEmbeddings-2011,MatousekSedgwickTancerWagner:EmbeddabilityS3Decidable-2013,Cadek:Algorithmic-solvability-of-the-lifting-extension-problem-2013}).

Here, we are interested in necessary and sufficient conditions for the existence of almost $r$-embeddings.
One motivation are \emph{Tverberg-type problems} in topological combinatorics (see the corresponding subsection below). 
Another motivation is that, in the classical case $r=2$, embeddability is often proved in two steps: in the first step, the 
existence of an \define{almost embedding} (=almost $2$-embedding) is established; in the second step this almost embedding 
is transformed into an embedding, by removing \emph{local} self-intersections. Similarly, we expect the existence of 
an almost $r$-embedding to be not only an obvious necessary condition but a useful stepping stone towards the existence of 
$r$-embeddings and, in a further step, towards the existence of embeddings outside the so-called \emph{metastable range} (see Remark~\ref{remarks-main}~(c)).

\subsection{The Deleted Product Criterion for Almost 
\texorpdfstring{$r$}{r}-Embeddings}
There is a well-known \emph{necessary condition} for the existence of almost $r$-embeddings. Given a simplicial complex $K$ and $r\geq 2$, the (combinatorial) \define{deleted $r$-fold product}\footnote{
For more background on deleted products and the broader \emph{configuration space/test map} framework, see, e.g., \cite{Matousek:BorsukUlam-2003} or \cite{Zivaljevic:UserGuideEquivariantTopologyCombinatorics-96,Zivaljevic:UserGuideEquivariantTopologyCombinatorics2-98}.} 
of $K$ is defined as
$$
\delprod{K}{r} := \{(x_1,\ldots,x_r)\in \sigma_1\times \dots \times \sigma_r \mid \sigma_i \textrm{ a simplex of } K, \sigma_i\cap \sigma_j =\emptyset \textrm{ for } 1\leq i < j \leq r\}.
$$
The deleted product is a regular polytopal cell complex (a subcomplex of the $r$-fold cartesian product), whose cells are products of $r$-tuples of pairwise disjoint simplices of $K$.
\begin{lemma}[\textbf{Necessity of the Deleted Product Criterion}]
\label{lem:delprod-necessary} 
\label{lem:delprod-necessary-meta}
Let $K$ be a finite simplicial complex, and let $d ,r \geq 2$ be integers. If there exists an almost $r$-embedding $f:K \rightarrow \R^d$ 
then there exists an equivariant map\footnote{Here and in what follows, if $X$ and $Y$ are spaces on which a finite group $G$ acts (all group actions will be from the right) then we will use the notation $F\colon X\to_G Y$ for maps that are \define{equivariant}, i.e., that satisfy $F(x \cdot g)= F(x)\cdot g$ for all $x\in X$ and $g\in G$).} 
\begin{equation*}
\widetilde{f} \colon \delprod Kr \rightarrow_{\sym_r} S^{d(r-1)-1},
\end{equation*}
where $S^{d(r-1)-1}=\big\{(y_1,\ldots,y_r)\in (\R^d)^r\mid \textstyle \sum_{i=1}^r y_i=0, \sum_{i=1}^r \|y_i\|_2^2=1\big\}$,
and the symmetric group $\sym_r$ acts on both spaces by permuting components.
\end{lemma}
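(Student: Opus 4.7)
The plan is to exhibit $\widetilde{f}$ as a composition of three equivariant maps: first apply $f$ componentwise, then project away the thin diagonal in $(\R^d)^r$, and finally normalize onto the sphere. This is the standard configuration space/test map construction; the only substantive point is that the almost $r$-embedding hypothesis is exactly what is needed to keep the image away from the thin diagonal.

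First, I would define
\[
F \colon \delprod{K}{r} \longrightarrow (\R^d)^r, \qquad F(x_1,\ldots,x_r) := (f(x_1),\ldots,f(x_r)).
\]
This is continuous, and equivariant for the component-permuting action of $\sym_r$ on both sides. The key observation is that $F$ avoids the thin diagonal $\Delta := \{(y,\ldots,y) \in (\R^d)^r\}$: if $(x_1,\ldots,x_r) \in \delprod{K}{r}$, then by definition each $x_i$ lies in some simplex $\sigma_i$ and the $\sigma_i$ are pairwise disjoint; having $f(x_1)=\cdots=f(x_r)$ would then exhibit a global $r$-fold point of $f$, contradicting the almost $r$-embedding hypothesis.

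Next, I would use the standard $\sym_r$-equivariant deformation retraction of $(\R^d)^r \setminus \Delta$ onto $S^{d(r-1)-1}$. Concretely, let $W_r := \{(y_1,\ldots,y_r) \in (\R^d)^r : \sum_i y_i = 0\}$ and define
\[
g \colon (\R^d)^r \setminus \Delta \longrightarrow W_r \setminus \{0\}, \qquad g(y_1,\ldots,y_r) := \bigl(y_1 - \bar{y},\ldots, y_r - \bar{y}\bigr),
\]
where $\bar{y} = \frac{1}{r}\sum_i y_i$. Note that $g$ is well-defined into $W_r \setminus \{0\}$ precisely because $(y_1,\ldots,y_r) \notin \Delta$ means the coordinates are not all equal, so at least one $y_i - \bar{y}$ is nonzero. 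Then normalize via
\[
h \colon W_r \setminus \{0\} \longrightarrow S^{d(r-1)-1}, \qquad h(z) := z/\|z\|_2.
\]
Both $g$ and $h$ are manifestly $\sym_r$-equivariant under the permutation action.

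Finally, I would set $\widetilde{f} := h \circ g \circ F$. As a composition of three continuous equivariant maps, $\widetilde{f} \colon \delprod{K}{r} \to_{\sym_r} S^{d(r-1)-1}$ is the required map. There is no real obstacle here: the argument is purely formal once one observes that ``almost $r$-embedding'' translates precisely to ``$F$ misses $\Delta$''; the rest is the standard identification of $S^{d(r-1)-1}$ with the unit sphere in the $\sym_r$-representation $W_r$ orthogonal to the diagonal.
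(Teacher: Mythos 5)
Your proposal is correct and follows essentially the same route as the paper: define the componentwise map $f^r$, observe that the almost $r$-embedding hypothesis is exactly the statement that its image on $\delprod{K}{r}$ misses the thin diagonal, and compose with the standard equivariant retraction onto $S^{d(r-1)-1}$. You merely spell out the retraction (subtract the mean, then normalize) that the paper leaves implicit.
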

\begin{proof}
Given $f \colon K \rightarrow \R^d$, define $f^r\colon \delprod Kr \to (\R^d)^{r}$ by $f^r(x_1, \ldots, x_r):= (f(x_1), \ldots f(x_r))$. Then $f$ is an almost $r$-embedding iff the image of $f^r$ avoids the \emph{thin diagonal}
$\delta_{r}(\R^d):=\{ (y, \ldots, y) \mid  y \in \R^d \} \subset (\R^d)^r$.
Moreover, $S^{d(r-1)-1}$ is the unit sphere in the orthogonal complement  $\delta_{r}(\R^d)^\bot \cong \R^{d(r-1)}$, and
there is a straightforward homotopy equivalence
$\rho\colon (\R^d)^r \setminus \delta_{r}(\R^d) \simeq S^{d(r-1)-1}.$ Both $f^r$ and $\rho$ are equivariant, hence so is their composition
$\widetilde{f}:=\rho\circ f^r\colon \delprod{K}{r}\to_{\sym_r} S^{d(r-1)-1}.$
\end{proof}
Our main result is that the converse of Lemma~\ref{lem:delprod-necessary} holds in a wide range of dimensions.
\begin{theorem}[\textbf{Sufficiency of the Deleted Product Criterion in the $r$-Metastable Range}] 
\label{thm_Hae_Web_an}
\label{thm_Hae_Web_an_restated} 
Let $m,d,r\geq 2$ be integers satisfying 
\begin{equation}
\label{eq:r-metastable}
\hfill r d \ge (r+1) m + 3.\hfill 
\end{equation}
Suppose that $K$ is a finite $m$-dimensional simplicial complex and that there exists an equivariant map $F:\delprod Kr \rightarrow_{\sym_r} S^{d(r-1)-1}$. Then
there exists an almost $r$-embedding $f\colon K\to \R^d$.  
\end{theorem}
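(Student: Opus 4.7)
The approach generalises the classical Haefliger--Weber proof (the case $r=2$) using the $r$-fold Whitney trick developed in Part I as the main geometric engine. Start with a PL map $f_0 \colon K \to \R^d$ in general position. The $r$-metastable inequality $rd \geq (r+1)m + 3$ controls the global $r$-fold intersection set $\Sigma(f_0) \subset \delprod{K}{r}$: it has dimension at most $rm - (r-1)d$, so the top-dimensional obstructions are isolated, signed intersection points occurring over $r$-tuples $(\sigma_1,\ldots,\sigma_r)$ of pairwise disjoint simplices of $K$ with $\sum_i \dim \sigma_i = (r-1)d$. The goal is to modify $f_0$ by an ambient isotopy to cancel all of these off.

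The first step is to interpret the hypothesis via $\sym_r$-equivariant obstruction theory on $\delprod{K}{r}$. The target $S^{d(r-1)-1}$ is $(d(r-1)-2)$-connected, and the inequality $rd \geq (r+1)m + 3$ is precisely what is needed to push the dimension of $\delprod{K}{r}$ below the next potential obstruction level, so the existence of $F$ is equivalent to the vanishing of the primary equivariant obstruction class in $H^{d(r-1)}_{\sym_r}(\delprod{K}{r}; \pi_{d(r-1)-1}(S^{d(r-1)-1}))$ (with the twisted sign action of $\sym_r$). On the other hand, this same class is represented geometrically by the equivariant cocycle that counts signed global $r$-fold points of the generic map $f_0^r \colon \delprod{K}{r} \to (\R^d)^r$ against the thin diagonal $\delta_r(\R^d)$. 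Thus the existence of $F$ is equivalent to saying that the signed count of global $r$-fold points of $f_0$, computed cellwise in the top cells of $\delprod{K}{r}$, is an equivariant coboundary; in particular the points can be paired off algebraically.

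The main step is to realise this cancellation geometrically by an $r$-fold Whitney trick: given two global $r$-fold points $p,q \in f_0(\sigma_1)\cap \ldots \cap f_0(\sigma_r)$ of opposite sign, one constructs, for each $i$, an embedded disk $D_i$ in $\R^d$ of dimension $\dim\sigma_i + 1$, whose boundary lies in $f_0(\sigma_i)$ and connects the two preimages of $p$ and $q$ there, and whose interiors meet only in a single common Whitney point. A local ambient isotopy supported in a neighbourhood of $D_1 \cup \ldots \cup D_r$ then eliminates the pair $\{p,q\}$ without creating new global $r$-fold points. Iterating this over all cancelling pairs produces an almost $r$-embedding. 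The main obstacle is establishing the $r$-fold Whitney trick in the present $r$-metastable range rather than the narrower codimension range $d = rk$, $m = (r-1)k$ of Part I. One must verify, by transversality, (i) that each Whitney disk $D_i$ can be embedded in $\R^d$ disjointly from $f_0(K)$ away from its attaching arc, and (ii) that the $r$ disks can be placed in mutually general position, meeting only at a single prescribed Whitney point. Both reductions amount to general position dimension counts in $\R^d$ that tighten to exactly the inequality $rd \geq (r+1)m+3$; this is where essentially all of the difficulty lies, and it is the step that genuinely generalises the techniques of Part I.
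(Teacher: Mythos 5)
There is a genuine gap, and it is located exactly at the point your proposal glosses over. You assert that in the $r$-metastable range the global $r$-fold intersection set consists (in top dimension) of isolated signed points over tuples with $\sum_i\dim\sigma_i=(r-1)d$, and that the existence of $F$ is equivalent to the vanishing of the single primary obstruction in $H^{d(r-1)}_{\sym_r}(\delprod{K}{r};\pi_{d(r-1)-1}(S^{d(r-1)-1}))$. Neither claim holds under \eqref{eq:r-metastable} alone. For an $r$-tuple of pairwise disjoint simplices with $\sum_i s_i=s$, general position gives an intersection set of dimension $s-(r-1)d$, and for $s=rm$ this is $rm-(r-1)d\le (m-3r+3)/r$, which is strictly positive for many admissible $(m,d,r)$ (e.g.\ $r=3$, $m=12$, $d=17$ gives a $2$-dimensional intersection). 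An almost $r$-embedding must eliminate \emph{all} global $r$-fold points, including these positive-dimensional strata, so a signed count of isolated points does not capture the problem. Correspondingly, since $\dim\delprod{K}{r}=rm$ can exceed $d(r-1)$, obstruction theory does not terminate at the primary class: higher obstructions in $\pi_{j}(S^{d(r-1)-1})$ for $j\ge d(r-1)$ genuinely enter, and the hypothesis ``$F$ exists'' carries more information than the vanishing of one cohomology class. Your cancellation scheme (pair points of opposite sign, join by Whitney disks, isotope) is essentially the argument of Part~I, which is valid precisely in the special case $(r-1)d=rm$ where the intersections are $0$-dimensional; it does not prove the theorem as stated.

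The paper's actual route is structured to handle the positive-dimensional case. First, a Reduction Lemma uses engulfing to confine, for a single $r$-tuple, the entire (possibly positive-dimensional) intersection set inside a ball $B^d$ meeting each $f(\sigma_i)$ in a properly embedded ball. Second, the obstruction to disjoining inside $B^d$ is not a sign but an \emph{intersection class} $\alpha\in\pi_{s-1}(S^{d(r-1)-1})$; the Local Disjunction Lemma shows that $\alpha=0$ suffices to disjoin, and its proof requires ambient surgery on $\sigma_i\cap\sigma_r$ to raise connectivity, a further engulfing step to make these intersections balls, and a suspension argument reducing $r$ balls in $B^d$ to $r-1$ balls in $\sigma_r$, inductively down to Weber's case $r=2$. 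Third, the global argument is not a one-shot cancellation but a double induction over the cells of $\delprod{K}{r}$ in which one maintains an equivariant homotopy between $\widetilde f$ and $F$ on an increasing subcomplex; this is why a second part of the Local Disjunction Lemma (realizing an arbitrary prescribed element of $\pi_{s}(S^{d(r-1)-1})$ by a ``linking'' isotopy) is indispensable, with no analogue in your outline. To repair your proposal you would need to replace the signed-point bookkeeping by this homotopy-class bookkeeping over all cells of the deleted product, and replace the Whitney-disk construction by machinery capable of eliminating intersection sets of positive dimension.
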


\begin{remarks}
\label{remarks-main}
\begin{enumerate}[(a)]
\item When studying almost $r$-embeddings, it suffices to consider maps $f\colon K\to \R^d$ that are \emph{piecewise-linear}\footnote{Recall that $f$ is PL if there is some subdivision $K'$ of $K$ such that $f|_\sigma$ is affine for each simplex $\sigma$ of $K'$.} (\emph{PL}) and \emph{in general position}.\footnote{Every continuous map 
$g\colon K\to \R^d$ can be approximated arbitrarily closely by PL maps in general position, and if $g$ is an almost $r$-embedding, then the same holds for any map sufficiently close to $g$.} 
\item Theorem~\ref{thm_Hae_Web_an} is trivial for \emph{codimension} $d-m\leq 2$. Indeed, if $r,d,m$ satisfy \eqref{eq:r-metastable} and, additionally, $d-m\leq 2$ then a straightforward calculation shows that $(r-1)d>rm$, so that a map $K\to \R^d$ in general position has no $r$-fold points.
\item The special case $r=2$ of Theorem~\ref{thm_Hae_Web_an} corresponds to the classical \emph{Haefliger--Weber Theorem} \cite{Haefliger:Plongements-differentiables-dans-le-domaine-stable-1962,Weber:Plongements-de-polyhedres-dans-le-domaine-metastable-1967}, which guarantees that for $2d\geq 3m+3$ the existence of an equivariant map $\delprod K 2 \rightarrow_{\sym_2} S^{d-1}$ guarantees the existence of an almost embedding $f\colon K\to \R^d$. The condition $2d\geq 3m+3$ is often referred to as the \define{metastable range}; correspondingly, we call Condition~\eqref{eq:r-metastable} the \define{$r$-metastable range}.\footnote{We remark that the terminology ``\emph{$k$-metastable range}''
also appears in a different setting, namely for links of a finite number of sheres $S^{p_1},S^{p_2},\ldots,S^{p_\ell}$ smoothly embedded in $S^m$, see \cite[Appendix~9.1]{haefliger1966enlacements}. The definition of the 
$r$-metastable range in the present paper and the $k$-metastable range in \cite{haefliger1966enlacements} are \emph{not} the same, but since the contexts are different, we hope that no confusion will arise.}

In the metastable range, an almost embedding can further be turned into an embedding by a delicate geometric construction~\cite{Skopenkov_deleted_product,Weber:Plongements-de-polyhedres-dans-le-domaine-metastable-1967}, which fails outside of the metastable range \cite{Segal:Quasi-embeddings-and-embeddings-of-polyhedra-in-mathbb-Rsp-m-1992}.

Turning almost $r$-embeddings into $r$-embeddings seems to be an even more subtle problem, which
we plan to pursue in a future paper.

\item Theorem~\ref{thm_Hae_Web_an} significantly extends one of the main results of our previous paper \cite[Thm.~7]{MabillardWagner15} (see also the extended abstract \cite[Thm.~3]{MabillardWagner:TverbergWhitney-2014}), which 
treated the special case $(r-1)d=rm$, $d-m\geq 3$, and settles one of the open questions raised there.
\end{enumerate}
\end{remarks}

\subsection{Algorithmic Consequences}
Very recently, Filakovsk\'{y} and Vok\v{r}\'{\i}nek~\cite{FilakovskyVokrinek} obtained the following algorithmic result regarding equivariant maps between $G$-spaces.\footnote{Their result builds on and significantly generalizes earlier results in the non-equivariant setting \cite{Cadek:Computing-all-maps-into-a-sphere-2014,Cadek:Polynomial-time-computation-of-homotopy-groups-and-Postnikov-systems-2014} and for \emph{free} actions \cite{Cadek:Algorithmic-solvability-of-the-lifting-extension-problem-2013}, respectively.}
If a group $G$ acts on a space and $H\leq G$ is a subgroup, we denote by $X^H=\{x\in X\colon xh=x , \, \forall h\in H\}$ the set of $H$-fixed points of $X$. Furthermore, we write $\conn(Y)$ for the \emph{connectivity} of a space $Y$ (i.e., the maximum $k$ such that every map $S^i \to Y$ can be extended to a map $B^{i+1}\to Y$).
\begin{theorem}[{\cite{FilakovskyVokrinek}}]
\label{thm:FV}
There is an algorithm with the following specifications: 

Let $X$ and $Y$ be finite simplicial complexes, and let $G$ be a finite group that acts (simplicially) on $X$ and $Y$. 
Suppose that $\dim (X^H) \leq 2 \conn(Y^H)+1$ for every subgroup $H\leq G$.\footnote{Here, we use the convention 
that $\dim \emptyset = -\infty = \conn(\emptyset)$. In other words the condition is trivially satisfied if $X^H=\emptyset$, 
and if $Y^H=\emptyset$, then we must have $X^H =\emptyset$ as well.}

Given $X$ and $Y$ (and the simplicial maps specifying the group action) as input, the algorithm decides whether there exists
a $G$-equivariant map $X\to_G Y$.

Moreover, if $G$ and the dimension of $X$ are fixed, then the algorithm runs in polynomial time.
\end{theorem}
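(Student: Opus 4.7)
The plan is to combine equivariant obstruction theory with an effective (algorithmic) equivariant Postnikov tower, following the non-equivariant template of \v{C}adek et al.\ cited in the footnote. Concretely, I would build the equivariant map $X \to_G Y$ inductively over an equivariant CW-structure on $X$, working up the skeleta of the orbit space and extending across one $G$-orbit of cells at a time.

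Fix equivariant subdivisions of $X$ and of $Y$ so that for every subgroup $H \le G$ the fixed-point sets $X^H$ and $Y^H$ are subcomplexes. Suppose $f \colon X^{(n)} \to_G Y$ has already been built. An orbit of $(n+1)$-cells is represented by $G \cdot \sigma$ with isotropy $H = G_\sigma$, and extending $f$ across this orbit is equivalent to extending $f|_{\partial \sigma} \colon \partial \sigma \to Y^H$ across $\sigma$. The primary obstruction is a class in $\pi_n(Y^H)$, and assembling these classes over all orbits gives a Bredon-cohomology obstruction in $H^{n+1}_G\bigl(X;\underline{\pi_n(Y^{(-)})}\bigr)$; the map extends (possibly after modifying $f$ on the previous skeleton) if and only if this class vanishes. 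The hypothesis $\dim(X^H) \le 2\conn(Y^H)+1$ says precisely that at every stage $n \le \dim(X^H)$ we are in the \emph{stable range} for $Y^H$, so $\pi_n(Y^H)$ is an abelian group, the obstruction theory is additive, and the higher (secondary, tertiary, \ldots) obstructions that normally block any decision algorithm do not appear.

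To turn this into an algorithm I would construct, for each subgroup $H \le G$, an effective Postnikov tower for $Y^H$ in the sense of \v{C}adek--Kr\v{c}\'al--Matou\v{s}ek--Sergeraert--Vokr\'inek, together with simplicial approximations of the inclusions $Y^{H'} \embed Y^H$ for $H \subseteq H'$. This makes the coefficient system $\underline{\pi_n(Y^{(-)})}$ algorithmically accessible as a functor on the orbit category of $G$. The Bredon cochain complex with these coefficients is a finite chain complex of finitely generated abelian groups, so computing its cohomology and deciding whether a given cocycle is a coboundary reduces to linear algebra over $\Z$ (Smith normal form). At each stage one must also decide whether a nonvanishing primary obstruction can be cancelled by modifying the lift on the previous skeleton; in the stable range this is a second, analogous linear-algebra problem. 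When $G$ and $\dim X$ are fixed, the number of stages is a constant, each effective Postnikov stage is computable in polynomial time by the cited results, and every Smith-normal-form computation is polynomial in the input size.

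The main obstacle is the equivariant bookkeeping between fixed-point sets of different subgroups. Unlike the free case treated in \cite{Cadek:Algorithmic-solvability-of-the-lifting-extension-problem-2013}, where a single effective Postnikov tower suffices, here one has to build the Postnikov towers for all $Y^H$ \emph{simultaneously and compatibly}, so that each inclusion $Y^{H'} \embed Y^H$ is realized by maps between the Postnikov approximations that intertwine the $k$-invariants. This is precisely what the hypothesis on $\dim(X^H)$ versus $\conn(Y^H)$ for \emph{every} $H$ controls: it ensures that the stable-range tools work uniformly across the isotropy poset, so that the cellwise obstructions assemble into a single Bredon-cohomology class that the algorithm can compute and (if possible) kill. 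Managing this compatibility across the orbit category, while preserving polynomial-time bounds, is the technical heart of the proof.
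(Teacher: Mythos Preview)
The paper does not prove this statement at all: Theorem~\ref{thm:FV} is quoted from \cite{FilakovskyVokrinek} and used as a black box to derive the algorithmic corollary about almost $r$-embeddings. There is therefore nothing in the paper to compare your proposal against.

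Your sketch is a plausible outline of the strategy one expects behind \cite{FilakovskyVokrinek} (Bredon obstruction theory combined with effective, compatible Postnikov towers for the fixed-point spaces $Y^H$), and you correctly identify the main difficulty, namely building the Postnikov stages for all $Y^H$ functorially over the orbit category while keeping everything computable. But since the present paper neither proves nor outlines a proof of this result, your write-up should simply cite \cite{FilakovskyVokrinek} for the theorem rather than attempt an independent argument.
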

As an immediate consequence of Lemma~\ref{lem:delprod-necessary} and Theorems~\ref{thm_Hae_Web_an} and \ref{thm:FV}, one obtains:
\begin{corollary} 
In the $r$-metastable range \eqref{eq:r-metastable}, there is an algorithm that, 
given as input a finite $m$-dimensional simplicial complex $K$ and parameters $r$ and $d$,
decides whether there is an almost $r$-embedding of $K$ to $\R^d$. If $r$ and $d$ are fixed, the algorithm runs in polynomial time.
\end{corollary}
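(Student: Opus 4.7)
The plan is to combine the equivalence between almost $r$-embeddings and equivariant maps (obtained from the preceding two results) with the Filakovsk\'y--Vok\v{r}\'inek algorithm. By Lemma~\ref{lem:delprod-necessary} and Theorem~\ref{thm_Hae_Web_an}, in the $r$-metastable range a finite $m$-dimensional simplicial complex $K$ admits an almost $r$-embedding into $\R^d$ if and only if there exists an equivariant map $\delprod{K}{r} \to_{\sym_r} S^{d(r-1)-1}$. This reduces the decision problem to the equivariant-map problem handled by Theorem~\ref{thm:FV}.

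I will apply Theorem~\ref{thm:FV} with $X := \delprod{K}{r}$, $Y := S^{d(r-1)-1}$, and $G := \sym_r$. First I will describe how to construct the algorithm's input: from $K$, enumerate all $r$-tuples of pairwise disjoint simplices and take their products, producing $\delprod{K}{r}$ as a regular polytopal $\sym_r$-complex with at most $O(|K|^r)$ cells; a routine further subdivision turns it into a simplicial $\sym_r$-complex. The sphere $Y$ with its permutation action is triangulated equivariantly, e.g., as an $\sym_r$-invariant fine subdivision of the boundary of a suitable polytope in the sum-zero subspace of $(\R^d)^r$; for fixed $r$ and $d$ this has constant size.

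Next I verify the hypothesis $\dim(X^H) \leq 2\conn(Y^H) + 1$ for every subgroup $H \leq \sym_r$. The key observation is that the $\sym_r$-action on $\delprod{K}{r}$ is \emph{free}: if a point $(x_1,\ldots,x_r)$ were fixed by some non-identity $\pi \in \sym_r$, then $x_i = x_{\pi(i)}$ for some $i \neq \pi(i)$, contradicting the fact that $x_i$ and $x_{\pi(i)}$ lie in disjoint simplices. Hence $X^H = \emptyset$ for every non-trivial $H$ and the condition is vacuous there. For $H = \{e\}$ one has $\dim X \leq rm$ and $\conn Y = d(r-1) - 2$, so the requirement reduces to $rm \leq 2d(r-1) - 3$; a short calculation shows this inequality follows from the $r$-metastable bound $rd \geq (r+1)m + 3$ for all $r \geq 2$ and $m \geq 0$.

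For the polynomial-time claim, note that the $r$-metastable inequality forces $m \leq (rd-3)/(r+1)$, so fixing $r$ and $d$ bounds both $m$ and $\dim X \leq rm$ by a constant, while $|X|$ remains polynomial in $|K|$. The polynomial-time clause of Theorem~\ref{thm:FV} then delivers an algorithm polynomial in $|K|$. The only substantive point is the freeness of the deleted-product action, which collapses all the fixed-point inequalities required by Theorem~\ref{thm:FV} to the single constraint for the trivial subgroup --- and this is exactly the constraint that the $r$-metastable range was tailored to guarantee.
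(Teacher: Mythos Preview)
Your proof is correct and follows essentially the same approach as the paper: reduce via Lemma~\ref{lem:delprod-necessary} and Theorem~\ref{thm_Hae_Web_an} to the existence of an equivariant map, then invoke Theorem~\ref{thm:FV}, using freeness of the $\sym_r$-action on $\delprod{K}{r}$ to make the fixed-point hypothesis vacuous for nontrivial subgroups and checking the single remaining inequality for $H=\{e\}$. Your version is in fact slightly more careful in two places: you correctly compute $2\conn(Y)+1 = 2d(r-1)-3$ (the paper writes $2d(r-1)-1$, a harmless arithmetic slip since the weaker bound still follows from the metastable hypothesis), and you make the polynomial-time argument explicit by observing that fixing $r$ and $d$ bounds $m$, hence $\dim X$.
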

\begin{proof} By Lemma~\ref{lem:delprod-necessary} and Theorem~\ref{thm_Hae_Web_an}, there exists an almost $r$-embedding of 
$K$ to $\R^d$ iff there exists an $\sym_{r}$-equivariant map from $X:=\delprod{K}{r}$ to $Y:=S^{d(r-1)-1}$. Thus, we only need to check that the assumptions of Theorem~\ref{thm:FV} are satisfied in this setting. Both the deleted product $X$ and the sphere $Y$ can be triangulated in such a way that $G=\sym_{r}$ acts by simplicial maps. Moreover, the action of $\sym_{r}$ on $X$ is free, hence $X^H=\emptyset$ if $H\leq  \sym_{r}$ is not the trivial group $\{e\}$ consisting only of the identity, and $X^{\{e\}}=X$. Thus, the condition $\dim X^H \leq 2\conn (Y^H)+1$ is trivially satisfied whenever $H\neq \{e\}$, and for $H=\{e\}$ it is satisfied since $\dim X \leq rm \leq 2\conn(Y)+1=2d(r-1)-1$ in the $r$-metastable range.
\end{proof}

\subsection{Background and Motivation: Tverberg-Type Problems and Beyond}

Methods from algebraic topology and the general framework of \emph{configuration spaces} and \emph{test maps} have been very successfully used in discrete mathematics and theoretical computer science, see, e.g., 
\cite{Zivaljevic:UserGuideEquivariantTopologyCombinatorics-96,Zivaljevic:UserGuideEquivariantTopologyCombinatorics2-98,Matousek:BorsukUlam-2003,Blagojevic:Using-equivariant-obstruction-theory-in-combinatorial-geometry-2007,Karasev:Topological-methods-in-combinatorial-geometry-2008} for surveys. In particular, \emph{equivariant obstruction theory} and, more generally \emph{equivariant homotopy theory}, provide powerful tools for deciding whether suitable test maps exist. However, in cases where the existence of a test map does not settle the problem, further geometric ideas are needed. The general philosophy and underlying idea here and in the two companion papers \cite{MabillardWagner15,Avvakumov:Eliminating-Higher-Multiplicity-Intersections-III.-2015} is to complement equivariant methods by methods from \emph{geometric topology}, in particular \emph{piecewise-linear topology}.
\medskip

The initial motivation and first application for the predecessor paper \cite{MabillardWagner15} was the long-standing 
\begin{conjecture}[Topological Tverberg Conjecture]\label{conj:Tvb}
Let $r\geq 2$, $d\geq 1$, and $N=(d+1)(r-1)$. Then there is no almost $r$-embedding of the $N$-simplex $\sigma^N$ to $\R^d$.
\end{conjecture}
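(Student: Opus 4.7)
The plan is to argue by contrapositive, using Lemma~\ref{lem:delprod-necessary}. If $f\colon \sigma^N\to\R^d$ were an almost $r$-embedding with $N=(d+1)(r-1)$, then it would induce an $\sym_r$-equivariant map
$\widetilde f\colon \delprod{(\sigma^N)}{r}\to_{\sym_r} S^{d(r-1)-1}$,
so the task reduces entirely to ruling out such equivariant maps. I would first treat the case when $r=p$ is prime, which is the classical framework of B\'ar\'any--Shlosman--Sz\H{u}cs.

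The central step is to restrict the $\sym_p$-action to its cyclic subgroup $\Z/p$, which acts \emph{freely} on both sides: on $S^{d(p-1)-1}$ the $\Z/p$-representation has no nonzero fixed vectors thanks to the constraint $\sum y_i = 0$, and on $\delprod{(\sigma^N)}{p}$ a non-trivial cyclic shift cannot fix an $r$-tuple of points lying in pairwise disjoint simplices. One then analyses the homotopy type of the domain via the deleted join: for $K=\sigma^N$, the deleted join $(\sigma^N)^{*p}_\Delta$ is the iterated join $[p]^{*(N+1)}$, a wedge of $N$-spheres and therefore $(N-1)$-connected, which via the standard comparison between deleted product and deleted join forces $\delprod{(\sigma^N)}{p}$ to have enough connectivity for the obstruction-theoretic comparison with $S^{d(p-1)-1}$.

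The contradiction then comes from Dold's theorem (equivalently, a computation of mod-$p$ Fadell--Husseini or Volovikov indices): since $N=(d+1)(r-1)>d(r-1)-1$, the domain is too highly connected (relative to its free $\Z/p$-action) to admit any equivariant map to the target sphere of strictly smaller dimension than its connectivity plus one. This contradicts the existence of $\widetilde f$, and hence of $f$, in the prime case, and Volovikov's extension to elementary abelian $p$-groups promotes the argument to all prime powers.

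The main obstacle, and the reason this cannot be a proof of the full conjecture, is that for $r$ not a prime power no elementary abelian $p$-subgroup of $\sym_r$ acts freely in the manner Dold or Volovikov require; indeed, {\"O}zaydin's unpublished 1987 computation instead \emph{constructs} an $\sym_r$-equivariant map $\delprod{(\sigma^N)}{r}\to_{\sym_r}S^{d(r-1)-1}$ in those cases, and combining that map with Theorem~\ref{thm_Hae_Web_an} of the present paper produces genuine almost $r$-embeddings of $\sigma^N$ into $\R^d$ in the $r$-metastable range. Thus Conjecture~\ref{conj:Tvb} as stated is in fact false for such $r$, and any complete proof must either restrict to prime powers or reformulate the statement.
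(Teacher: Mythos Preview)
The statement you were asked to prove is labelled a \emph{Conjecture} in the paper, and the paper does not attempt to prove it; on the contrary, the surrounding discussion explains that it is \emph{false} whenever $r$ is not a prime power. Your write-up correctly recognises this and gives a reasonable sketch of the B\'ar\'any--Shlosman--Sz\H{u}cs/\"Ozaydin/Volovikov argument for the prime and prime-power cases, together with the reason the equivariant obstruction vanishes otherwise.

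There is, however, one genuine gap in your final paragraph. You assert that combining \"Ozaydin's equivariant map with Theorem~\ref{thm_Hae_Web_an} of the present paper ``produces genuine almost $r$-embeddings of $\sigma^N$ into $\R^d$ in the $r$-metastable range.'' This is exactly the obstacle the paper highlights: for $K=\sigma^N$ with $N=(d+1)(r-1)$ we have $\dim K=N>d$, so the codimension hypothesis $d-m\geq 3$ (and hence the $r$-metastable inequality $rd\geq (r+1)m+3$) fails badly, and Theorem~\ref{thm_Hae_Web_an} does not apply directly to $\sigma^N$. The counterexamples require an additional reduction---either the Gromov/Blagojevi\'c--Frick--Ziegler constraint trick, which replaces the problem by an almost $r$-embedding question for a lower-dimensional skeleton to which the codimension-$3$ result of \cite{MabillardWagner15} does apply, or the alternative reduction in \cite{MabillardWagner15} itself. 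Without naming one of these, your last step does not go through.
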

This conjecture, proposed by Bajmoczy and B\'ar\'any~\cite{Bajmoczy:On-a-common-generalization-of-Borsuks-and-Radons-1979} and Tverberg~\cite[Problem~84]{Gruber:Problems-in-geometric-convexity-1979} as a topological generalization of a classical theorem of Tverberg in convex geometry~\cite{Tverberg:A-generalization-of-Radons-theorem-1966}, had been proved by Bajmoczy and B\'ar\'any~\cite{Bajmoczy:On-a-common-generalization-of-Borsuks-and-Radons-1979} for $r=2$, by B\'ar\'any, Shlosman, and Sz\H{u}cs~\cite{Barany:On-a-topological-generalization-of-a-theorem-of-Tverberg-1981} for all primes $r$, and by \"Ozaydin~\cite{Ozaydin:Equivariant-maps-for-the-symmetric-group-1987} for prime powers $r$, but the case of arbitrary $r$ had remained 
elusive. An important reason was that \"Ozaydin~\cite[Thm.~4.2]{Ozaydin:Equivariant-maps-for-the-symmetric-group-1987} had shown
that for $r$ is \emph{not} a prime power there does exist an equivariant map $F\colon \delprod {(\sigma^N)}{r} \to_{\sym_{r}} S^{d(r-1)-1}$, so that Lemma~\ref{lem:delprod-necessary} cannot be directly used to show that there is no almost $r$-embedding.

In \cite{MabillardWagner:TverbergWhitney-2014,MabillardWagner15}, we proposed a new approach to the conjecture, namely the idea of constructing counterexamples, i.e., almost $r$-embeddings $\sigma^{N}\to \R^d$, when $r$ is not a prime power, by combining \"Ozaydin's result with the sufficiency of the deleted product product in the special case $\dim K=\frac{r-1}{r}d$, $d-\dim K\geq 3$ of Theorem~\ref{thm_Hae_Web_an} (\cite[Thm~3]{MabillardWagner:TverbergWhitney-2014} and \cite[Thm~7]{MabillardWagner15}).
At the time the extended abstract  \cite{MabillardWagner:TverbergWhitney-2014} appeared, there remained what seemed to be a serious obstacle to completing this approach: the assumption of \emph{codimension $d-\dim K\geq 3$} is not satisfied for $K=\sigma^N$.
Frick \cite{Frick:Counterexamples-to-the-topological-Tverberg-conjecture-2015} 
observed that this ``codimension $3$ barrier'' can be overcome by a combinatorial trick discovered independently 
independently by Gromov~\cite[p.~445-446]{gromov2010singularities} 
and Blagojevi\'c-Frick-Ziegler
\cite{Blagojevic:Tverberg-plus-constraints-2014})
and that therefore the results of \cite{Ozaydin:Equivariant-maps-for-the-symmetric-group-1987},
\cite{gromov2010singularities,Blagojevic:Tverberg-plus-constraints-2014} and \cite{MabillardWagner:TverbergWhitney-2014,MabillardWagner15} combined yields counterexamples to the topological Tverberg conjecture for $d \ge 3r + 1$ whenever $r$ is not a prime power, see \cite{Blagojevic:Barycenters-of-Polytope-Skeleta-and-Counterexamples-2015}. In the full version \cite{MabillardWagner15}, another solution to
the codimension $3$ obstacle is given, leading to counterexamples for $d \geq 3r$. In joint work with Avvakumov and Skopenkov \cite{Avvakumov:Eliminating-Higher-Multiplicity-Intersections-III.-2015}, we recently improved this further and obtained counterexamples for $d\geq 2r$, using an extension (for $r\geq 3$) of \cite[Thm.~7]{MabillardWagner15} to \emph{codimension 2}.
\smallskip

For a more detailed history of the topological Tverberg conjecture and the construction of counterexamples, we refer to the surveys and discussions in 
\cite{Barany:Tverbergs-theorem-at-50:-extensions-and-counterexamples-2016,skopenkov2016UserGuide,Blagojevic:Beyond-the-Borsuk-Ulam-theorem:-The-topological-2016,simon2015average,jojic2016topology}.
\smallskip

There are numerous close relatives and other variants of (topological) Tverberg-type problems and results  
\cite{Barany:On-the-number-of-halving-planes-1990,Barany:A-colored-version-of-Tverbergs-theorem-1992,Zivaljevic:The-colored-Tverbergs-problem-and-complexes-of-injective-functions-1992,Zivaljevic:UserGuideEquivariantTopologyCombinatorics2-98,Blagojevic:Optimal-bounds-for-the-colored-Tverberg-problem-2009,
Sarkaria:A-generalized-van-Kampen-Flores-theorem-1991,Volovikov:On-the-van-Kampen-Flores-theorem-1996,
Blagojevic:Tverberg-plus-constraints-2014,Blagojevic:Beyond-the-Borsuk-Ulam-theorem:-The-topological-2016,Barany:Tverbergs-theorem-at-50:-extensions-and-counterexamples-2016}.
These can be seen as \emph{generalized nonembeddability results} or \emph{problems} and typically state that a particular complex $K$ (or family of complexes) does not admit an almost $r$-embedding to $\R^d$. Theorem~\ref{thm_Hae_Web_an} provides a general 
necessary and sufficient condition for such topological Tverberg-type results in the $r$-metastable range.

As an application of a different flavor, Frick~\cite{Frick:Intersection-patterns-of-finite-sets-2016} recently found a connection between almost $r$-embeddings and chromatic numbers of certain Kneser hypergraphs and used Theorem~\ref{thm_Hae_Web_an} to prove lower bounds for these chromatic numbers in certain cases.  

\subsection*{Further Questions and Future Research}
\begin{enumerate}[(a)]
\item \emph{Beyond the $r$-Metastable Range.} Is condition~\eqref{eq:r-metastable} needed in Theorem~\ref{thm_Hae_Web_an}? In the case $r=2$, it is known that for $d\geq 3$, the Haefliger--Weber Theorem fails outside the metastable range:  for every pair $(m,d)$ with $2d < 3m+3$ and $d\geq 3$, there are examples   \cite{Mardesic:varepsilon-Mappings-and-generalized-manifolds-1967,Segal:Quasi-embeddings-and-embeddings-of-polyhedra-in-mathbb-Rsp-m-1992,Freedman:van-Kampens-embedding-obstruction-is-incomplete-for-2-complexes-in-bf-R4-1994,Segal:Embeddings-of-polyhedra-in-mathbb-Rm-and-the-deleted-product-obstruction-1998,GoncalvesSkopenkov:EmbeddingsHomologyEquivalentManifolds-2006} of $m$-dimensional complexes $K$ such that $\delprod{K}{2}\to_{\sym_2} S^{d-1}$ but $K$ does not embed into $\R^d$. Moreover, in the case $r=2$, $m=2$ and $d=4$, the examples do not even admit an almost embedding into $\R^4$, see \cite{Avvakumov:Eliminating-Higher-Multiplicity-Intersections-III.-2015}. 

On the other hand, as remarked above, in \cite{Avvakumov:Eliminating-Higher-Multiplicity-Intersections-III.-2015} the following extension of \cite[Thm.~7]{MabillardWagner15} is proved: if $r\geq 3$ $d=2r$, and $m=2(r-1)$, then a finite $m$-dimensional complex $K$ admits an almost $r$-embedding if and only if there exists an equivariant map $\delprod{K}{r}\to_{\sym_r} S^{d(r-1)-1}$. 

It would be interesting to know whether there is analogous extension (for $r\geq 3$) of Theorem~\ref{thm_Hae_Web_an} that is nontrivial in codimension $d-m=2$.

\item \emph{The Planar Case and Hanani--Tutte}. In the classical setting ($r=2$) of embeddings, the case $d=2, m=1$ of \emph{graph planarity} is somewhat exceptional: the parameters lie outside the ($2$-fold) metastable range, but the existence of an equivariant map $F\colon \delprod{K}{2}\to_{\sym_2} S^1$ is sufficient for a graph $K$ to be planar, by the \emph{Hanani--Tutte Theorem}\footnote{The existence of an equivariant map implies, via standard equivariant obstruction theory, that there exists a map from the graph $K$ into $\R^2$ such that the images of any two disjoint (\emph{independent}) edges intersect an even number of times, which is the hypothesis of the Hanani--Tutte Theorem.} \cite{Hanani:UnplattbareKurven-1934,Tutte:TowardATheoryOfCrossingNumbers-1970}. The classical proofs of that theorem rely on \emph{Kuratowski's Theorem}, but recently \cite{Pelsmajer:Removing-even-crossings-2007,Pelsmajer:Removing-independently-even-crossings-2010}, more direct proofs have been found that do not use forbidden minors. It would be interesting to know whether there is an analogue of the Hanani--Tutte theorem for almost $r$-embeddings 
of $2$-dimensional complexes in $\R^2$, as an approach to constructing counterexamples to the topological Tverberg conjecture in dimension $d=2$.
We plan to investigate this in a future paper.
\end{enumerate}

\subsection*{Structure of the Paper}
The remainder of the paper is devoted to the proof of Theorem~\ref{thm_Hae_Web_an}. By Lemma~\ref{lem:delprod-necessary}, we only need to show that the existence of an equivariant map $\delprod{K}{r}\to_{\sym_r} S^{d(r-1)-1}$ implies the existence of an almost $r$-embedding $K\to \R^d$. Moreover, by Remarks~\ref{remarks-main}~(b) and (d), we may assume, in addition to the parameters being in the $r$-fold metastable range, that the \emph{codimension} $d-m$ of the image of $K$ in $\R^d$ is at least $3$, and that the intersection multiplicity $r$ is also at least $3$. Thus, we will work under the following hypothesis:
\begin{equation} 
\label{eq_metastable}
\hfill 
r d \ge (r+1) m + 3, \quad d-m \ge 3, \quad \text{and} \quad r \ge 3.
\hfill
\end{equation}

The proof of Theorem~\ref{thm_Hae_Web_an} is based on two main lemmas: Lemma~\ref{lem_reduction} (\emph{Reduction Lemma}) reduces the situation to a single $r$-tuple of pairwise disjoint simplices of $K$, and Lemma~\ref{lem_whitney_trick} (\emph{Local Disjunction Lemma}) solves that reduced situation (this is a generalization of the \emph{Weber--Whitney Trick} to multiplicity $r$). In Section~\ref{s:two-lemmas}, we give the precise (and somewhat technical) statements of these lemmas, along with some background, and prove the Reduction Lemma~\ref{lem_reduction}. In Section~\ref{s:proof-thm}, we show how to prove Theorem~\ref{s:proof-thm} using these lemmas, before proving the Local Disjunction Lemma~\ref{lem_whitney_trick} (the core of the paper)
in Section~\ref{s:proof-second-lemma}.
\newcommand{\footnotehidden}[1]{\footnote{#1}}

\section{Main Lemmas: Reduction \& Local Disjunction} 
\label{s:two-lemmas}
In this section, we formulate the two main lemmas on which the proof of Theorem~\ref{thm_Hae_Web_an_restated} rests.


We work in the \emph{piecewise-linear} (\emph{PL}) category (standard references are \cite{Zeeman:Seminar-on-combinatorial-topology-1966,Rourke:Introduction-to-piecewise-linear-topology-1982}).
All manifolds (possibly with boundary) are PL-manifolds (can be triangulated as locally finite simplicial complexes such that the link of every nonempty face is either a PL-sphere or a PL-ball), and all maps between \define{polyhedra} (geometric realizations of simplicial complexes) are PL-maps (\ie  simplicial on sufficiently fine subdivisions).\footnote{The PL assumption is no loss of generality: if  
$K$ is a finite simplicial complex and $f\colon K\to \R^d$ is an almost $r$-embedding then $f$ can be slightly perturbed to a PL map with the same property.
} In particular, all balls are PL-ball and all spheres are PL-spheres (PL-homeomorphic to a simplex and the boundary of a simplex, respectively).
A submanifold $P$ of a manifold $Q$ is \define{properly embedded} if $\partial P=P\cap \partial Q$.
The \define{singular set} of a PL-map $f$ defined on a polyhedron $K$ is the closure in $K$ of the set of points at which $f$ is not injective.

One basic fact that we will use for the proofs of both Lemmas~\ref{lem_reduction} and~\ref{lem_whitney_trick} is the following version of \textit{engulfing} \cite[Ch.~VII]{Zeeman:Seminar-on-combinatorial-topology-1966}:
\begin{theorem}[Engulfing, {\cite[Ch.~VII, Thm.~20]{Zeeman:Seminar-on-combinatorial-topology-1966}}]
\label{thm_Zeeman}
Let $M$ be an $m$-dimensional $k$-connected manifold with $k \le m-3$. Let $X$ a compact $x$-dimensional subpolyhedron 
in the interior of $M$. If $x \le k$, then there exists a collapsible subpolyhedron $C$ in the interior of $M$ with $X \subseteq C$ and $\dim (C ) \le x+1$.
\end{theorem}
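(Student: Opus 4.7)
The plan is to prove the engulfing theorem by induction on $x = \dim X$, using the $k$-connectedness of $M$ to produce null-homotopies and the codimension-three hypothesis $k \le m-3$ to resolve the resulting singularities through PL general position. In the base case $x = 0$, the set $X$ is a finite collection of points in $\interior{M}$. Since $M$ is path-connected and $m \ge 3$, one can pick a spanning tree of PL paths joining these points and placed in general position so as to be pairwise disjoint away from their endpoints; their union is a $1$-dimensional tree, hence collapsible, and contains $X$.

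For the inductive step, assume the theorem for subpolyhedra of dimension less than $x$, and let $\dim X = x$ with $1 \le x \le k$. Since $M$ is $x$-connected, the inclusion $X \embeds M$ is null-homotopic, so it extends to a PL map $g \colon CX \to M$ from the cone $CX$, which is a compact polyhedron of dimension $x+1$. Putting $g$ into PL general position relative to $X$, the image $g(CX) \subset \interior{M}$ has dimension at most $x+1$, while its singular set $\Sigma$ has dimension at most $2(x+1) - m \le x - 1$ by the hypothesis $m \ge k+3 \ge x+3$. By the inductive hypothesis applied to $\Sigma$, there is a collapsible polyhedron $C_\Sigma \subset \interior{M}$ of dimension $\le x$ containing $\Sigma$. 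Zeeman's piping construction now permits one to push the two local sheets of $g(CX)$ meeting along $\Sigma$ off each other along disjoint PL tubes inside a thin regular neighborhood of $C_\Sigma$; the codimension-three condition is exactly what guarantees that these tubes can be taken mutually disjoint and disjoint from the regular part of $g(CX)$. The resulting embedded $(x+1)$-dimensional polyhedron $C \supset X$ is arranged so that it PL-collapses onto $C_\Sigma$, which in turn is collapsible by construction, so $C$ itself is collapsible.

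The main obstacle is the bookkeeping in the piping step: one must ensure that the pipes used to eliminate each stratum of $\Sigma$ can be chosen simultaneously and embedded disjointly from one another and from the non-singular part of $g(CX)$, and then exhibit an explicit PL collapse of the modified polyhedron onto $C_\Sigma$. Both are standard consequences of general position in codimension $\ge 3$, but the combinatorial details are intricate, and I would follow Zeeman's systematic treatment in \cite[Ch.~VII]{Zeeman:Seminar-on-combinatorial-topology-1966}, where the construction is carried out in full.
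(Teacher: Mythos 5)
The paper offers no proof of this statement: Theorem~\ref{thm_Zeeman} is imported verbatim from Zeeman's notes \cite[Ch.~VII, Thm.~20]{Zeeman:Seminar-on-combinatorial-topology-1966} and used as a black box, so there is no internal argument to compare yours against. Judged on its own terms, your sketch follows the standard outline of Zeeman's engulfing proof --- cone off $X$ using $k$-connectedness, put the cone in general position, engulf the singular set by induction, and pipe away the singularities inside a regular neighbourhood of the smaller collapsible set --- and the dimension counts you give ($\dim \Sigma \le 2(x+1)-m \le x-1$ from $m \ge x+3$, and the inductive hypothesis applying since $\dim\Sigma < x \le k$) are correct.

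However, as a self-contained proof it has a genuine gap exactly where the theorem's content lies. First, ``the two local sheets of $g(CX)$ meeting along $\Sigma$'' understates the situation: a general-position map of an $(x+1)$-polyhedron into $M^m$ has branch points and triple and higher-multiplicity points (of dimension up to $3(x+1)-2m$, etc.), and Zeeman's treatment requires a separate, more delicate induction (the piping lemma / sunny collapsing) precisely to organize these strata; a single ``push the two sheets apart'' move does not dispose of them. Second, the assertion that the modified polyhedron $C$ \emph{collapses onto} $C_\Sigma$ is the crux and is stated without argument; producing an explicit collapse of the piped cone is the hard combinatorial step, not a routine consequence of codimension $\ge 3$. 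Third, one must check that the final polyhedron still \emph{contains} $X$ and stays in $\interior{M}$: the general position homotopy must be taken rel $X$ and pushed off $\partial M$ via a collar, and the singular set can a priori meet $X$, which needs to be addressed before applying the inductive hypothesis. Since your closing sentence defers all of this to Zeeman's Chapter VII, the proposal in effect reduces to the same citation the paper makes rather than an independent proof.
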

The collapsible polyhedron $C$ can be thought of as an analogue of a ``cone'' over $X$.


\begin{lemma}[Reduction Lemma] 
\label{lem_reduction}
Let $m,d,r$ be three positive integers satisfying \eqref{eq_metastable}.
Suppose $f\colon K \rightarrow \R^d$ is a map in general position, and $\sigma_1, \ldots , \sigma_r$ be pairwise disjoint simplices of $K$  of dimension $s_1, \ldots, s_r \leq m$ such that 
\[
f|_{\sigma_i}^{-1} (f (\sigma_1) \cap \cdots \cap f (\sigma_r) )
\]
is contained in the \emph{interior} of each simplex $\sigma_i$.
Then there exists a ball $B^d$ in $\R^d$ such that 
\begin{enumerate}
\item $B^d$ intersects each $f(\sigma_i)$ in a ball that is properly embedded in $B^d$, and that avoids the image of the singular set of $f|_{\sigma_i}$, 
as well as $f(\partial \sigma_i)$;
\item $B^d$ contains $f(\sigma_1) \cap \cdots \cap f (\sigma_r)$ in its interior; and 
\item $B^d$ does not intersect any other parts of the image $f(K)$.
\end{enumerate}
\end{lemma}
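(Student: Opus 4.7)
Set $T := f(\sigma_1) \cap \cdots \cap f(\sigma_r)$ and $X_i := (f|_{\sigma_i})^{-1}(T) \subset \interior{\sigma_i}$, and let $S_i$ denote the singular set of $f|_{\sigma_i}$. My plan is to build a collapsible polyhedron $C \subset \R^d$ with $T \subset C$ such that $C \cap f(\sigma_i)$ is also a collapsible subpolyhedron of $f(\sigma_i)$, and then let $B^d$ be a regular neighborhood of $C$. The construction consists of two applications of engulfing (Theorem~\ref{thm_Zeeman}) interleaved with PL general-position perturbations; all of the dimension counts that appear are tight consequences of the $r$-metastable inequality~\eqref{eq_metastable}. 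A preliminary general-position count, using $\dim T \leq \sum_i s_i - (r-1)d$, shows at once that $T$ is disjoint from $f(S_i)$, from $f(\partial \sigma_i)$, and from $f(\tau)$ for every simplex $\tau$ of $K$ not among the $\sigma_i$.

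\emph{First engulfing, inside each $\sigma_i$.} Since $\sigma_i$ is contractible and $\dim X_i \leq s_i - 3$ (the latter following from $(r-1)(d-m) \geq 3$), Theorem~\ref{thm_Zeeman} yields a collapsible $C_i \subset \interior{\sigma_i}$ with $X_i \subseteq C_i$ and $\dim C_i \leq \dim T + 1$. A PL perturbation of $C_i$ rel $X_i$ (justified by $\dim C_i + \dim S_i < s_i$) makes $C_i \cap S_i = \emptyset$, so $f|_{C_i}$ is an embedding. A further perturbation of each $C_j \setminus X_j$ rel $X_j$ puts $f(C_j)$ in general position with every $f(\sigma_i)$, $i \neq j$: the inequality $\dim C_j + s_i \leq \dim T + s_i + 1 < d$, again from~\eqref{eq_metastable}, forces $f(C_j \setminus X_j) \cap f(\sigma_i) = \emptyset$ and hence $f(C_j) \cap f(\sigma_i) = f(X_j) = T \subseteq f(C_i)$. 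Setting $D := \bigcup_i f(C_i)$, we obtain $D \cap f(\sigma_i) = f(C_i)$ for every $i$.

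\emph{Second engulfing, inside $\R^d$.} Since $\dim D \leq \dim T + 1 \leq d - 3$, Theorem~\ref{thm_Zeeman} applied to $D$ in $\R^d$ yields a collapsible $C \supseteq D$ of dimension at most $\dim T + 2$. A final general-position perturbation of $C \setminus D$ pushes it off every $f(\sigma_i)$, $f(S_i)$, $f(\partial \sigma_i)$, and $f(\tau)$ for $\tau \notin \{\sigma_1, \ldots, \sigma_r\}$; each of the underlying inequalities (e.g., $\dim C + s_i \leq \dim T + s_i + 2 < d$) is exactly the $r$-metastable condition $(r+1)m + 3 \leq rd$. Consequently $C \cap f(\sigma_i) = D \cap f(\sigma_i) = f(C_i)$ is a collapsible subpolyhedron of $f(\sigma_i)$, while $C$ avoids $f(K) \setminus \bigcup_i f(\sigma_i)$, $f(S_i)$, and $f(\partial \sigma_i)$.

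Take $B^d$ to be a regular neighborhood of the collapsible polyhedron $C$ in $\R^d$; it is automatically a PL $d$-ball containing $T \subset C$ in its interior. Shrinking $B^d$ if necessary, it remains disjoint from $f(K) \setminus \bigcup_i f(\sigma_i)$, from $f(\partial \sigma_i)$, and from $f(S_i)$. Triangulating so that $C$, the $f(C_i)$, and $f(K)$ are all subcomplexes, a standard relative regular-neighborhood construction identifies $B^d \cap f(\sigma_i)$ with a regular neighborhood of $C \cap f(\sigma_i) = f(C_i)$ inside the PL ball $f(\sigma_i)$; since $f(C_i)$ is collapsible, this intersection is itself a PL $s_i$-ball, properly embedded in $B^d$, delivering conditions (1)--(3). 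The main obstacle I expect is the bookkeeping of the general-position perturbations — in particular pushing each $C_j \setminus X_j$ off every other $f(\sigma_i)$ while keeping $X_j$ fixed — together with citing the correct relative version of regular neighborhood theory to ensure that $B^d$ really intersects each $f(\sigma_i)$ in a properly embedded ball.
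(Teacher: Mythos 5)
Your proposal is correct and follows essentially the same route as the paper: engulf the $r$-fold preimage set inside each $\sigma_i$ by a collapsible $C_i$, engulf $\bigcup_i f(C_i)$ by a collapsible polyhedron in $\R^d$, verify via the same general-position dimension counts (all reducing to $(r+1)m+\text{const}<rd$) that nothing else is hit, and take a regular neighborhood as $B^d$. The only difference is presentational — you make explicit some perturbations (e.g., pushing $C_j\setminus X_j$ off the other $f(\sigma_i)$) that the paper subsumes under ``by general position.''
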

\begin{proof}
Let us consider $S_i := f^{-1} (f (\sigma_1) \cap \cdots \cap f (\sigma_r)) \cap \sigma_i$. By general position \cite[Thm~5.4]{Rourke:Introduction-to-piecewise-linear-topology-1982} this is a polyhedron of dimension at most
$
s_1 + \cdots + s_r - (r-1) d \le r m - (r-1) d. 
$ 
By Theorem~\ref{thm_Zeeman}, we find $C_i \subseteq \sigma_i$ collapsible, containing $S_i$, and of dimension at most $r m - (r-1) d + 1$. Figure~\ref{fig_C_1} illustrates the case $r=3$.

\begin{figure}
\begin{center}
\includegraphics[scale=1]{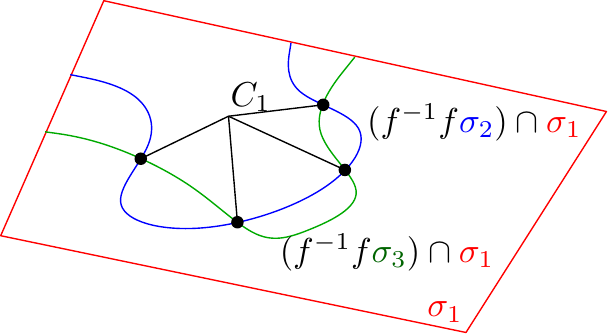}
\caption{For $r=3$, the construction of $C_1$ inside of $\sigma_1$. The collapsible polyhedron $C_1$ is a ``cone'' over the triple intersection set $S_1$ (which consists of four isolated points in the picture).}
\label{fig_C_1}
\end{center}
\end{figure}

The dimension of the singular set of $f|_{\sigma_i}$ is at most $2s_i -d$. Hence, $C_i$ is disjoint from it since
$
(r m - (r-1) d + 1) + (2s_i - d) -s_i \leq (r+1)m - r d + 1,
$ 
which is negative in the metastable range. Thus,  $f$ is injective in a neighbourhood of $C_i$.

Again by Theorem~\ref{thm_Zeeman}, we find in $\R^d$ a collapsible polyhedron $C_{\R^d}$ of dimension at most $r m - (r-1) d + 2$ and containing $f (C_1) \cup \cdots \cup f (C_r)$. Figure~\ref{fig_C_1_2_3} illustrates the construction for $r=3$.
By general position we have the following properties:
\begin{enumerate}
\item $C_{\R^d}$ intersects $ f(\sigma_i)$ exactly in $f(C_i)$. Indeed, in the metastable range,
$rm - (r-1) d + 2 + s_i - d \le (r+1) m - r d + 2 < 0.$
\item $C_{\R^d}$ does not intersect any other part of $f(K)$ (by a similar computation). 
\end{enumerate}

\begin{figure}
\begin{center}
\includegraphics[scale=1]{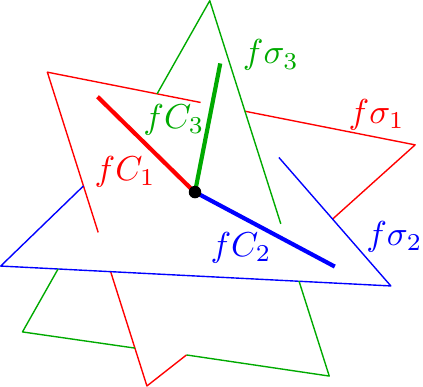}
\caption{For $r=3$, the polyhedron $C_{\R^d}$ is a ``cone'' over $fC_1 \cup f C_2 \cup f C_3$.}
\label{fig_C_1_2_3}
\end{center}
\end{figure}

We take a small \define{regular neighbourhood} \cite[Ch.~3]{Rourke:Introduction-to-piecewise-linear-topology-1982} $B$ of $C_{\R^d}$, which still avoids the singular set of each $f|_{\sigma_i}$ as well as other parts of $f(K)$. This regular neighbourhood is a ball, since $C_{\R^d}$ is collapsible. The intersection $B\cap f(\sigma_i)$ is a regular neighbourhood of $f(C_i)$ which is also a collapsible space, hence $B\cap f(\sigma_i)$ is a ball (properly contained in $B$).
\end{proof}

An \define{ambient isotopy} $H$ of a PL-manifold $X$ is a collection of homeomorphisms $H_t:  X \rightarrow X$  for $t \in [0,1]$, which vary continuously with $t$, and with $H_0 = \id$. We say that an ambient isotopy $H$ \define{throws} a subspace $Y \subseteq X$ onto $Z$ if $H_1(Y)=Z$, see \cite[Ch.~V]{Zeeman:Seminar-on-combinatorial-topology-1966}. 

We say that an ambient isotopy $H$ of $X$ is \define{proper} if  $H_t |_{\boundary X} = \id_{\boundary X}$  for all~$t$.

\begin{definition}\label{def_intersection_class}
Let $d,r \ge 2$ be integers, let $\sigma_1, \ldots , \sigma_r$ be balls of dimensions $s_1, \ldots, s_r$. We define
\[
s:=s_1+\ldots +s_r.
\]
Let $f$ be a continuous map, mapping the disjoint union of the $\sigma_i$ to a $d$-dimensional ball $B^d$, i.e., 
\[
f : \sigma_1 \sqcup \cdots \sqcup \sigma_r \to B^d.
\]

We define the \define{Gauss map $\t f$} associated to $f$
\[
\t f : \sigma_1\times \cdots \times \sigma_r \to B^d \times \cdots \times B^d, \quad \mbox{by} \quad (x_1,...,x_r)\mapsto (fx_1,...,fx_r),
\]

If, for each $i=1,...,r$,
\[
f\sigma_1 \cap \cdots \cap f \boundary \sigma_i \cap \cdots \cap  f\sigma_r = \emptyset.
\]
then  $\t f \boundary (\sigma_1 \times  \cdots \times \sigma_r) \subset B^d \times \cdots \times B^d$, avoids the \define{thin diagonal} $\delta_r (B^d) = \{ (x,\ldots , x ) \; | \; x \in B^d\}$ of $B^d$. Thus,
\begin{equation}\label{eq_gauss_map}
\boundary (\sigma_1 \times  \cdots \times \sigma_r)   \rightarrow
(B^d \times \cdots \times B^d) \setminus \delta_r (B^d).
\end{equation}

Observe that $\boundary (\sigma_1 \times  \cdots \times \sigma_r)\cong S^{s-1}$, where $s:=\sum_is_i$, and $(B^d \times \cdots \times B^d) \setminus \delta_r (B^d)$ is homotopy equivalent to $S^{d(r-1)-1}$. Therefore, the map \eqref{eq_gauss_map} defines an element 
\[
\alpha(f) \in \pi_{s-1} ( S^{d(r-1)-1} ),
\]
which we call \define{intersection class of $f$}.
\end{definition}

\begin{lemma}[Local Disjunction Lemma] \label{lem_whitney_trick} 
Let $m,d,r$ be three positive integers satisfying \eqref{eq_metastable}.

Let $\sigma_1, \ldots , \sigma_r$ be balls of dimensions $s_1, \ldots, s_r \leq m$ properly contained in a $d$-dimensional ball $B$ and with
$\sigma_1 \cap \cdots \cap \sigma_r $ in the \emph{interior} of $B$.
\begin{enumerate}
\item  \label{lem_whitney_part_I}
Let us denote by $\alpha$ the intersection class of the map $\sigma_1\sqcup\cdots\sqcup \sigma_r \to B^d$.

If $\alpha = 0 $, then there exists $(r-1)$ proper ambient isotopies of $B$ 
that we can apply to $\sigma_1, \ldots , \sigma_{r-1}$, respectively, to remove the $r$-intersection set; \ie  there exist $(r-1)$ proper isotopies $H^1_t, \ldots, H^{r-1}_t$ of $B$ throwing 
$\sigma_i$ onto $\sigma_i' := H^i_1 \sigma_i$ and such that
\[
\hfill
\sigma_1'  \cap \cdots \cap \sigma_{r-1}' \cap \sigma_r = \emptyset.
\hfill
\] 

\item \label{lem_whitney_part_II}
Let us assume that $\sigma_1 \cap \cdots \cap \sigma_r = \emptyset$ and $\sigma_2\cap \cdots \cap \sigma_r \neq \emptyset$, and let $z \in \pi_{s} ( S^{d(r-1)-1} )$.

There exists $J_t$ a proper ambient isotopy of $B$ such that 
\begin{itemize}
\item 
$
J_1 \sigma_1 \cap \sigma_2 \cap \cdots \cap \sigma_{r-1} \cap \sigma_r = \emptyset,
$
\item 
The intersection class of $f$ is $z$, where
\[
f: (\sigma_1 \times I)  \sqcup \sigma_2 \sqcup \cdots \sqcup \sigma_r 
 \to 
 B^d
\]
is defined as the inclusion on $\sigma_i$ for $i\ge 2$, and for $(x,t) \in \sigma_1 \times I$, $f(x,t) = J_t(x)$.
\end{itemize}
\end{enumerate}
\end{lemma}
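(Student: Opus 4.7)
The plan is to translate both parts into homotopy-theoretic statements about the Gauss map of Definition~\ref{def_intersection_class} and then to realize the resulting homotopies geometrically via engulfing (Theorem~\ref{thm_Zeeman}) and PL regular-neighborhood theory. In spirit this is the $r$-fold analogue of the Weber--Whitney trick, with the ordinary algebraic intersection number replaced by the homotopy class $\alpha\in\pi_{s-1}(S^{d(r-1)-1})$. The hypothesis \eqref{eq_metastable} will enter in two different ways: it kills obstruction groups so that null-homotopies exist, and it forces general-position dimension estimates of the form $(r+1)m-rd+c<0$ to be negative, hence empty.

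For Part~\ref{lem_whitney_part_I}, since $\alpha=0$ the boundary Gauss map $\t f|_{\partial(\sigma_1\times\cdots\times\sigma_r)}$ extends to a map $G\colon \sigma_1\times\cdots\times\sigma_r\to(B^d)^r\setminus\delta_r(B^d)$. I will then use $G$ to build PL homotopies $h^i\colon \sigma_i\times I\to B$ rel $\partial\sigma_i$, for $i=1,\ldots,r-1$, satisfying $h^1_1(\sigma_1)\cap\cdots\cap h^{r-1}_1(\sigma_{r-1})\cap\sigma_r=\emptyset$; this is carried out by an inductive obstruction-theoretic reading of $G$, invoking \eqref{eq_metastable} to kill the obstruction at each stage. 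The previous paper \cite{MabillardWagner15} handled only the case $(r-1)d=rm$, in which the obstruction is concentrated in a single dimension and can be removed combinatorially; the novelty here is covering the whole metastable regime. Next, I convert each homotopy $h^i$ into a proper ambient isotopy $H^i_t$ of $B$: its track $T_i\subseteq B\times I$ is a compact polyhedron of dimension $s_i+1\leq m+1$, and by engulfing in $B\times I$ it sits inside a collapsible polyhedron whose regular neighborhood is a PL-ball $D_i$ meeting $B\times\{0\}$ and $B\times\{1\}$ in balls. Uniqueness of regular neighborhoods then promotes $h^i$ to an ambient isotopy $H^i$ supported in $D_i\subset \mathring B\times I$, automatically proper. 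A further general-position argument, using the small dimension of each $T_i$, lets one take $D_1,\ldots,D_{r-1}$ pairwise disjoint so that the $H^i$ may be applied simultaneously.

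For Part~\ref{lem_whitney_part_II}, the construction is a ``$z$-graded finger move.'' Let $T:=\sigma_2\cap\cdots\cap\sigma_r$, a nonempty polyhedron in $\mathring B$, disjoint from $\sigma_1$ by hypothesis. I select a PL arc $\gamma\subset\mathring B$ from a point of $\sigma_1$ to $T$ in general position and use engulfing to enclose $\gamma\cup T$ in a PL-ball. Representing $z$ by a PL map $\phi\colon S^s\to S^{d(r-1)-1}$, I then construct, inside this ball and a local coordinate ball of $\sigma_1$, a proper isotopy $J_t$ of $B$ that pushes a finger of $\sigma_1$ along $\gamma$ through a neighborhood of $T$ and retracts it, the track of the finger being a cone on $\phi$. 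By construction $J_1$ is the identity on $\sigma_1$ in a neighborhood of $T$, so $J_1\sigma_1\cap T=\emptyset$; a direct Gauss-map computation shows that the intersection class of the trace map $f$ equals the prescribed $z$. The range \eqref{eq_metastable} again ensures that the finger can be inserted without introducing extraneous $r$-fold intersections elsewhere.

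The hardest step is producing the $h^i$ from $G$ in Part~\ref{lem_whitney_part_I}. In the regime $(r-1)d=rm$ of \cite{MabillardWagner15} the obstruction lives in a single group and one can write down an explicit combinatorial move; in the full $r$-metastable range a genuine cellular/equivariant obstruction-theoretic induction is required, with the hypothesis $rd\geq(r+1)m+3$ used repeatedly to guarantee both that the intermediate obstruction groups vanish and that the track intersections appearing at each stage have negative virtual dimension, so that they can be removed by general position. This is the technical heart of the paper.
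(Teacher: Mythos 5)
Your proposal has a genuine gap at its central step, and the gap is precisely the content of the lemma. In Part~\ref{lem_whitney_part_I} you extend the Gauss map over $\sigma_1\times\cdots\times\sigma_r$ (legitimate, since $\alpha=0$) and then assert that an ``inductive obstruction-theoretic reading of $G$'' produces homotopies $h^i$ of the $\sigma_i$ with $h^1_1(\sigma_1)\cap\cdots\cap h^{r-1}_1(\sigma_{r-1})\cap\sigma_r=\emptyset$. But there is no off-the-shelf obstruction theory that converts a null-homotopy of the Gauss map into a geometric disjunction of the balls themselves; the passage from ``intersection class $=0$'' to ``intersection $=\emptyset$'' \emph{is} the theorem, and the condition \eqref{eq_metastable} cannot simply be ``invoked to kill the obstruction at each stage'' because the relevant obstruction is not identified. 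The paper's proof supplies exactly the missing mechanism, and it is geometric rather than obstruction-theoretic: (i) by \emph{ambient surgery} (attaching handles with trivialized normal block bundles, plus complementary handles, using transversality, Hudson/Irwin unknotting and the block-bundle calculus of the Appendix) one first makes each pairwise intersection $\sigma_i\cap\sigma_r$ $\dim(\sigma_1\cap\cdots\cap\sigma_r)$-connected; (ii) engulfing (Theorem~\ref{thm_Zeeman}) and a regular-neighborhood argument then reduce to the case where each $\sigma_i\cap\sigma_r$ is a ball properly embedded in $\sigma_r$; (iii) a suspension argument (Lemma~\ref{lem_commuting_square}, proved via deleted joins) shows that $\alpha$ is the $\bigl(d(r-1)-s_r(r-2)\bigr)$-fold suspension of the intersection class $\beta$ of the $(r-1)$ balls $\sigma_i\cap\sigma_r$ inside $\sigma_r$; since the parameters lie in the stable range, $\alpha=0$ forces $\beta=0$, and one inducts on $r$ down to Weber's two-ball case, extending isotopies of $\sigma_r$ to $B^d$ via the join structure $B^d=\sigma_r * S^{d-s_r-1}$. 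None of steps (i)--(iii) appears in your proposal, and without them the claimed homotopies $h^i$ are not constructed.

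Part~\ref{lem_whitney_part_II} has the same problem in a different guise: the assertion that a finger move can be built ``the track of the finger being a cone on $\phi$'' for an arbitrary $\phi$ representing $z\in\pi_s(S^{d(r-1)-1})$ is not a construction. The paper instead realizes $z$ by desuspending it to a class in $\pi_{s+(r-2)s_r-(r-1)d+1}(S^{(r-2)s_r-1})$, applying the inductive hypothesis to the balls $\sigma_i\cap\sigma_r$ inside $\sigma_r$ (after a path/regular-neighborhood reduction making these intersections balls, which is where your arc $\gamma$ reappears), and then verifying via the same commuting-square machinery that the extended isotopy of $B^d$ realizes the suspension of that class, i.e.\ $z$ itself. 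A secondary issue in your Part~\ref{lem_whitney_part_I}: promoting the homotopies $h^i$ to proper ambient isotopies is not a consequence of uniqueness of regular neighborhoods alone; one needs an unknotting/concordance-implies-isotopy statement in the appropriate codimension, which the paper obtains from Hudson's and Irwin's theorems where needed.
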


\begin{remark}
\begin{itemize}
\item
The proof of Lemma~\ref{lem_whitney_trick} is the technical core of the paper and will be given in Section~\ref{s:proof-second-lemma}.
For $r=2$, Lemma~\ref{lem_whitney_trick} already appears in Section 4 of Weber's thesis \cite{Weber:Plongements-de-polyhedres-dans-le-domaine-metastable-1967}. Our contribution in the present paper is to show that the result holds for any $r\ge 3$.
\item  Roughly speaking, 
Part~\ref{lem_whitney_part_I} of Lemma~\ref{lem_whitney_trick} means that if the intersection class vanishes, then one can resolve the $r$-intersection set.

Part~\ref{lem_whitney_part_II} means that each element of $\pi_{s} ( S^{d(r-1)-1} )$ can be obtained by moving from a fixed solution to a new solution.
\item 
In our previous paper \cite{MabillardWagner15}, we consider the special case when all the global $r$-intersection points are isolated (i.e., the $r$-intersections are $0$-dimensional).  The ``elimination'' of these isolated $r$-intersections is achieved in two steps:
\begin{enumerate}[(1)]
\item \label{elim_isolated_I}
First, we obtain the algebraic cancellation of the $r$-intersection points by ``finger moves'':  we modify a given map $f :K^m\to \R^d$ such that for each $r$-tuples of pairwise disjoint cells $\sigma_1,\ldots, \sigma_r$ of $K$, the intersection $f\sigma_1\cap \cdots\cap f \sigma_r$ consists of \emph{pairs of points of opposite intersection signs} (hence, algebraically, they ``cancel'').
\item \label{elim_isolated_II}
In a second step, we geometrically cancel each pair of $r$-intersection points of opposite sign, and for this, we use an $r$-fold version of the Whitney Trick (a special case of the Local Disjunction Lemma). Hence, we obtain $f \sigma_1 \cap \cdots \cap f \sigma_r = \emptyset$.
\end{enumerate}
In other words, for the special case consider in our previous paper, the proof decomposes naturally into two steps: (1) first a ``linking step'' when we link cell together to introduce new $r$-intersection points (and therefore obtain the ``algebraic cancellation'' of the $r$-intersection points), (2) secondly, in an ``unlinking step'' we translate that algebraic cancellation into geometry (i.e., from $\mathrm{intersection} =0$, we obtain $\mathrm{intersection}= \emptyset$).

In the present paper, these two steps are not so disjoint anymore: multiple cases of global $r$-intersection points can occur, resulting in singular set of various dimension (no only isolated points). Therefore, we will have to merge the two steps (1) and (2): In our construction, we will first ``unlink'' the $r$-intersection points of a given  $r$-tuple of cells (i.e., remove their $r$-intersection points), and immediately after we will ``link'' this $r$-tuple in order to permit the unlinking of $r$-tuples of higher dimension. (See both parts of Lemma~\ref{lem_whitney_trick}: Part~\ref{lem_whitney_part_I} corresponds to the ``unlinking'', and Part~\ref{lem_whitney_part_II} corresponds to the ``linking'').

\end{itemize}
\end{remark}
\section{Combining Reduction and Local Disjunction} 
\label{s:proof-thm}

Here, we show how to use Lemmas~\ref{lem_reduction} and \ref{lem_whitney_trick} to prove the main theorem. The inductive argument used in the proof mirrors that of Section 5 in Weber's thesis~\cite{Weber:Plongements-de-polyhedres-dans-le-domaine-metastable-1967}, where Theorem~\ref{thm_Hae_Web_an_restated} is proven for $r=2$.

Let us recall that given a map $f \colon K^m \to \R^d$, we can induce 
\[
\t f \colon \delprod K r \to_{\sym_r} \R^{r \times d} \quad \mbox{by} \quad (x_1, \ldots,x_r) \mapsto (fx_1, \ldots, fx_r),
\]
whose image avoids the diagonal $\{(x,\ldots,x) \; | \; x \in \R^d\}$ if and only if $f$ is an $r$-almost embedding.

We condense the two parts of Lemma~\ref{lem_whitney_trick} into the following technical statement:
\begin{lemma}[Inductive step]\label{lem_inductive_step}
Let $f \colon K^m \to \R^d$ be a general position map, and let $F \colon \delprod K r \to_{\sym_r} S^{(r-1)d}$. 

Let $X \subset \delprod K r $ be a $\sym_r$-stable subcomplex such that 
\begin{itemize}
\item $\t f |_X$ avoids the diagonal $\{(x,\ldots,x) \; | \; x \in \R^d\} \subset \R^{r \times d}$,
\item $\t f |_X$ is $\sym_r$-homotopic to $F |_{X}$.
\end{itemize}
Let $\sigma_1 \times \cdots \times \sigma_r$ be a cell of $\delprod K r \setminus X$ whose boundary is contained in $X$, and let us denote by $Y$ the smallest $\sym_r$-stable subcomplex of $\delprod K r$ containing $X$ and $\sigma_1 \times \cdots \times \sigma_r$

Then there exists a map $g \colon K^m \to \R^d$ such that
\begin{itemize}
\item $\t g |_Y$ avoids the diagonal $\{(x,\ldots,x) \; | \; x \in \R^d\} \subset \R^{r \times d}$,
\item $\t g |_Y$ is $\sym_r$-homotopic to $F |_{Y}$.
\end{itemize}
\end{lemma}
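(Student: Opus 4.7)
My plan is to localize the problem via the Reduction Lemma and then handle it inside the resulting ball by two successive applications of the Local Disjunction Lemma.

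First I check that the hypothesis of Lemma~\ref{lem_reduction} holds for the tuple $(\sigma_1,\ldots,\sigma_r)$: any $r$-fold preimage $(x_1,\ldots,x_r)$ of $f$ with some $x_i\in\partial\sigma_i$ would lie in $\partial(\sigma_1\times\cdots\times\sigma_r)\subset X$, contradicting the fact that $\widetilde f|_X$ avoids the thin diagonal. The Reduction Lemma therefore produces a $d$-ball $B\subset\R^d$ in which the global $r$-fold intersection $f(\sigma_1)\cap\cdots\cap f(\sigma_r)$ is entirely concentrated and which meets each $f(\sigma_i)$ in a properly embedded sub-ball disjoint from the singular set of $f|_{\sigma_i}$ and from every other part of $f(K)$.

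Next I establish that the intersection class $\alpha$ of the local configuration inside $B$ (in the sense of Definition~\ref{def_intersection_class}) vanishes. The $\sym_r$-homotopy from $\widetilde f|_X$ to $F|_X$, restricted to $\partial(\sigma_1\times\cdots\times\sigma_r)\cong S^{s-1}\subset X$, shows that $\widetilde f|_{\partial(\sigma_1\times\cdots\times\sigma_r)}$ is homotopic in $(\R^d)^r\setminus\delta_r(\R^d)$ to $F|_{\partial(\sigma_1\times\cdots\times\sigma_r)}$; the latter, being the restriction of $F$ to the boundary of a contractible cell, is null-homotopic. Under the homotopy equivalence $(\R^d)^r\setminus\delta_r(\R^d)\simeq S^{d(r-1)-1}$ this reads $\alpha=0$. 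Now Part~\ref{lem_whitney_part_I} of Lemma~\ref{lem_whitney_trick} delivers proper ambient isotopies of $B$ moving $f(\sigma_1),\ldots,f(\sigma_{r-1})$ to positions whose $r$-fold intersection with $f(\sigma_r)$ is empty. Since these isotopies are supported in $B$, extending them by the identity and composing with $f$ yields a new PL general-position map $g\colon K\to\R^d$ that coincides with $f$ outside $B$. In particular $\widetilde g|_X=\widetilde f|_X$ remains $\sym_r$-homotopic to $F|_X$, and by the freeness of the $\sym_r$-action on $\delprod Kr$ together with the automatic equivariance of $\widetilde g=g^r$, this single modification simultaneously clears the $r$-fold intersection on the entire $\sym_r$-orbit of $\sigma_1\times\cdots\times\sigma_r$.

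What remains is to align the $\sym_r$-homotopy class of $\widetilde g$ with $F$ on $\sigma_1\times\cdots\times\sigma_r$. By construction $\widetilde g$ now maps this $s$-ball into the complement of the diagonal, and on the boundary it already matches $F$ up to the homotopy inherited from $X$. The obstruction to extending this to an equivariant homotopy over all of $Y$ is therefore a single element $z\in\pi_s(S^{d(r-1)-1})$, which I will realize (with the opposite sign) by applying Part~\ref{lem_whitney_part_II} of Lemma~\ref{lem_whitney_trick}: a proper ambient isotopy of $B$ that leaves the empty $r$-fold intersection undisturbed and whose associated intersection class equals $-z$. Equivariantly propagating this adjustment gives the desired $g$. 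The main obstacle in the plan is precisely this last step: Part~\ref{lem_whitney_part_II} requires a non-empty anchor intersection $\sigma_2\cap\cdots\cap\sigma_r\neq\emptyset$ inside $B$, and arranging such an anchor — either from the inductive order in which cells of $\delprod Kr\setminus X$ are processed, or from a preparatory ``linking'' operation performed while handling earlier cells — will require the bulk of the technical care, and mirrors the interleaved unlink/link strategy described in the remark following Lemma~\ref{lem_whitney_trick}.
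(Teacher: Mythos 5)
Your proposal follows the paper's proof of Lemma~\ref{lem_inductive_step} essentially step for step: verify the interiority hypothesis from $\partial(\sigma_1\times\cdots\times\sigma_r)\subset X$, localize with Lemma~\ref{lem_reduction}, deduce $\alpha=0$ from the fact that $F$ extends over the contractible cell, clear the intersection with Part~\ref{lem_whitney_part_I} of Lemma~\ref{lem_whitney_trick}, and then cancel the resulting obstruction class in $\pi_s(S^{d(r-1)-1})$ with Part~\ref{lem_whitney_part_II}. Two remarks on the points you leave open. First, the anchor condition $\sigma_2\cap\cdots\cap\sigma_r\neq\emptyset$ is \emph{not} obtained in the paper from the order in which cells are processed; it is arranged locally inside $B$ by a finger-move/piping construction (pipe each $\sigma_i'$, $i\geq 2$, to a sphere $S^{s_i}$ chosen in general position in the interior of $B$ so that $S^{s_2}\cap\cdots\cap S^{s_r}$ is a sphere $S$), with the piping absorbed into a slight modification of the isotopies $H^i_t$; moreover the degenerate case $\dim S<0$ is harmless because it forces $\pi_s(S^{d(r-1)-1})=0$, hence $\theta=0$ automatically. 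So this step is a short local argument, not the technical bulk you anticipate. Second, to make the obstruction $\theta$ well defined you need to compare $\widetilde f$ on $\partial(\sigma_1\times\cdots\times\sigma_r)$ with the local configuration of the sub-balls $\sigma_i'\subset B$; the paper does this bookkeeping explicitly via the Combinatorial Annulus Theorem, retracting each $\sigma_i$ onto $\sigma_i'$ by isotopies $G^i_t$ and assembling the map on $\partial(I\times\sigma_1\times\cdots\times\sigma_r)$ from five pieces. Your sketch treats this identification as immediate, which is a minor but genuine omission in how the class $z$ is actually defined.
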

\begin{proof}
Since the boundary of $\sigma_1 \times \cdots \times \sigma_r$ is contained in $X$ and $\t f|_X$ avoids the diagonal, we have,  for each $i = 1, \ldots, r$, 
\[
f^{-1} ( f\sigma_1 \cap \cdots \cap f\sigma_r ) \cap \sigma_i \subset \interior \sigma_i.
\]
Furthermore, the map
$
\t f : \boundary (\sigma_1 \times \cdots \times \sigma_r) \rightarrow S^{d(r-1)-1} 
$ is homotopic to $F$.

We are in position to apply Lemma~\ref{lem_reduction}: we find a ball $B^d$ in $\R^d$ with the three properties listed in the Lemma. Let us call $\sigma'_i$ the sub-ball in $\sigma_i$ properly embedded into $B^d$, \ie
$
\sigma'_i \overset{f}{\hookrightarrow} B^d$ , and $f \boundary \sigma_i'  = \boundary B^d \cap f \sigma'_i.
$ 

By the Combinatorial Annulus Theorem \cite[3.10]{Bryant:Piecewise-linear-topology-2002}, there exists an isotopy of $\sigma_i$ in itself that progressively retracts $\sigma_i$ to $\sigma_i'$. \Ie there exists $G^i_t : \sigma_i \rightarrow \sigma_i$ with $G^i_0$ being the identity and $G^i_1$ being a homeomorphism between $\sigma_i$ and $\sigma_i'$. We define a homotopy by
\begin{equation} \label{eq_retraction_sigma} 
\hfill
\begin{array}{rccc}
G:& \boundary ( I \times \sigma_1 \times \cdots \times \sigma_r) & \xrightarrow{fG^1 \times \cdots \times fG^r} & \R^d \times \cdots \times \R^d \setminus \delta_r \R^d\\
&(t,x_1, \ldots, x_r) & \longmapsto & (fG^1_t x_1 , \ldots fG^r_tx_r).
\end{array}
\hfill
\end{equation}
Since
\begin{equation} \label{eq_hom_class_sigma}
\hfill
\boundary ( \sigma_1 \times \cdots \times \sigma_r) \xrightarrow{f \times \cdots \times f} \R^d \times \cdots \times \R^d \setminus \delta_r \R^d
\hfill
\end{equation}
is homotopic to $F$, and $F$ is defined over $\sigma_1 \times \cdots \times \sigma_r$, therefore the homotopy class of
\[
\hfill
\boundary ( \sigma_1' \times \cdots \times \sigma_r')  \xrightarrow{f \times \cdots \times f} B^d \times \cdots \times B^d \setminus \delta_r B^d
\hfill
\]
is trivial.  Hence, we can use the first part of the Lemma~\ref{lem_whitney_trick} to find $(r-1)$ proper ambient isotopies of $B$, say $H^1_t, \ldots, H^{r-1}_t$, such that
$
H^1_1 (f \sigma_1') \cap \cdots \cap H^{r-1}_1(f\sigma_{r-1}') \cap f \sigma_r' = \emptyset.$ 
This removes the $r$-intersection set.

To finish the proof, we also need to extend the equivariant homotopy between $\t f$ and $F$ on the cell $\sigma_1 \times \cdots \times \sigma_r$, as the homotopy is already defined on $\boundary (\sigma_1 \times \cdots \times \sigma_r)$. This is when the second part of Lemma~\ref{lem_whitney_trick} becomes useful.

We define a map on 
$
\boundary ( I \times \sigma_1 \times \cdots \times \sigma_r )  \rightarrow  \R^d \times \cdots \times \R^d \setminus \delta_r \R^d
$
in the following way:
\begin{enumerate}
\item on $\{0\} \times \sigma_1 \times \cdots \times \sigma_r $, we use $F$,
\item on $[0,\frac 13] \times \boundary ( \sigma_1 \times \cdots \times \sigma_r) $, we use the homotopy from $F$ to \eqref{eq_hom_class_sigma},
\item on $[\frac 13,\frac 23] \times \boundary ( \sigma_1 \times \cdots \times \sigma_r)$, we use $G$,
\item on $[\frac 23,1] \times \boundary (\sigma_1 \times \cdots \times \sigma_r)$, we use 
$
 ( H^1_t \times \cdots \times H^{r-1}_t \times \id ) \circ ( f G^1_1 \times \cdots \times f G^r_1),
$
\item $\{1\} \times \sigma_1 \times \cdots \times \sigma_r $, we use
$
 ( H^1_1 \times \cdots \times H^{r-1}_1 \times \id ) \circ ( f G^1_1 \times \cdots \times f G^r_1).
$
\end{enumerate}
This defines a class $\theta  \in \pi_{\sum \dim\sigma_i} (S^{d(r-1)-1})$. To conclude, we need to have $\theta = 0$ (this is the condition to be able to extend to homotopy between $\t f$ and $F$).

By the second part of Lemma~\ref{lem_whitney_trick}, we can\footnote{
We can always obtain the assumption $\sigma_2\cap \cdots \cap \sigma_r \neq \emptyset$ by modifying the map $f$ as follows \cite[``Finger moves'' in the proof of Lemma~43]{MabillardWagner15}:
we pick $r-1$ spheres $S^{s_2},..., S^{s_r}$ in the interior of $B^d$ of dimension $s_2,...,s_r$ in general position and such that $S^{s_2} \cap \cdots \cap S^{s_r}$ is a sphere $S$. Then, for $i=2,...,r$, we pipe $\sigma_i'$ to $S^{s_i}$. The resulting map has the desired property.

This ``piping'' change can be absorbed by a slight modification (and renumbering) of the $H_t^i$. The support of these modifications is a collection of regular neighborhoods of $1$-polyhedra ($=$ paths used for piping).  

Also, note that the cases when, by general position, $\dim S < 0$ corresponds the trivial cases $\theta =0 $. Indeed, $\dim S <0$ corresponds to $(d-s_2) + \cdots + (d-s_r) > d$, i.e., $ (r-1)d + s_1 -d > \sum s_i $, and since $s_1-d \leq - 3$, we have $(r-1)d -1 > \sum s_i$, and so $\pi_{\sum \dim\sigma_i} (S^{d(r-1)-1}) =0$.
} perform a ``second move'' on $\sigma_1$ with an ambient isotopy $J_t$ of $B$ such that
\[
\boundary (I \times \sigma_1 \times \cdots \times \sigma_r) \xrightarrow{(J_t \times \id  \times \cdots \times  \id)\circ( H^1_1 \times \cdots \times H^{r-1}_1 \times \id ) \circ ( f G^1_1 \times \cdots \times f G^r_1)} \R^d \times \cdots \times \R^d \setminus \delta_r \R^d
\]
represents exactly $- \theta$. Therefore, by using this last move, we can assume that $\theta = 0$, \ie we can extend the equivariant homotopy between $\t f$ and $F$, as needed for the induction. 
\end{proof}

\begin{proof}[Proof of Theorem~\ref{thm_Hae_Web_an_restated}]
We are given $F: \delprod K r \rightarrow_{\sym_r} S^{d(r-1)-1}$, and we want to construct $f : K \rightarrow \R^d$ without global $r$-intersection points.

We start with a map $f : K \rightarrow \R^d$ in general position. We are going to inductively use Lemma~\ref{lem_inductive_step} to gradually remove all the global $r$-fold intersection of $f$. 

There are two levels in the induction. To describe these, let us fix a total ordering of the simplices of $K$ that extends the partial ordering by dimension, \ie
 \[
\hfill
K = \{ \tau_1 , \ldots , \tau_N\}, \qquad \dim \tau_i \leq \dim \tau_{i+1} \text{ for }1\leq i\leq N-1.
\hfill
\]

First, we give an informal plan of the ``double induction'' that we are going to use: 
we go over the list of simplices $\tau_1,...,\tau_N$, and for each simplex $\tau_i$ we consider all the global $r$-intersection of $\tau_i$ with all the simplices \emph{before} $\tau_i$ in the list.  More precisely, we consider the list $l_i$ of all $r$-tuples of pairwise disjoint simplices containing $\tau_i$ and simplices \emph{before} $\tau_i$ in the list $\tau_1,...,\tau_N$. For each $r$-tuple in $l_i$, we eliminate its global $r$-intersection points, by Lemma~\ref{lem_inductive_step}.

Therefore, once $\tau_i$ is fixed, we have a \emph{new list} $l_i$. We are going to \emph{order} $l_i$ (by a notion of dimension), and then inductively scan over it and remove the global $r$-intersections points for each $r$-tuple in $l_i$.

More formally, for the first level of the inductive argument, it suffices to prove the following: Suppose we are given a map $f: K \rightarrow \R^d$ in general position with the following two properties:
\begin{enumerate}
\item Restricted to the subcomplex
$
L = \{ \tau_1 , \ldots , \tau_{N-1}\}
$
the map $f|_L$ does not have any $r$-intersections between disjoint $r$-tuples of simplices;
\item $\t f$ restricted to $\delprod L r$ is $\sym_r$-equivariantly homotopic to $F$, where $\t f$ is the map defined in
Lemma~\ref{lem:delprod-necessary-meta}.
\end{enumerate}

Then we can redefine $f$ as to have these two properties on the whole of $K$. This is the first level of induction.

For the second level of the induction, let us define the \define{dimension} of a finite set 
of simplices as the sum of their individual dimensions. For the the purposes of this proof, we use the terminology  
\define{$k$-collection} for a set of cardinality $k$. 
Consider those $(r-1)$-collections $t$ of simplices of $L$ that, together with $\tau_N$, form an $r$-collection of pairwise disjoint simplices.
We fix a total ordering of these $(r-1)$-collections that extends the partial ordering given by dimension, \ie we list them as
$$t_1 , \ldots , t_M,$$ with 
$\dim t_i \leq \dim t_{i+1} \text{ for }1\leq i<M.$
(Thus, each $t_i$ is an $(r-1)$-collection of simplices of $L$, and $t_i$ joined with $\tau_N$ is a $r$-collection of pairwise disjoint simplices.)
Once again, inductively, it suffices to prove the following: Assuming that $f$ has the two properties
\begin{enumerate}
\item For each $(r-1)$-collection $t_i$ in the list
$
t_1 , \ldots , t_{M-1},
$
the map $f$ does not have any $r$-intersection with preimages in the $r$-collection formed by adjoining $\tau_N$ to $t_i$.

\item the map $\t f$ is $\sym_r$-equivariantly homotopic to $F$ on the complex
\[
\hfill
\delprod L r \cup \bigcup_{i \leq M-1} [ t_i \cup \{ \tau_N  \}] \subseteq \delprod K r,
\hfill
\]
where the operator $[-]$ converts an unordered $r$-collection of pairwise disjoint simplices of $K$ into the set of its corresponding cells\footnote{
E.g., 
$
[\{ \alpha , \beta ,\gamma\}] = \{ \alpha \times \beta \times \gamma , \alpha \times \gamma \times \beta, \beta \times \alpha \times \gamma , \beta \times \gamma \times \alpha, \gamma \times \alpha \times \beta, \gamma \times \beta \times \alpha  \}
$.
}
in $\delprod K r$.
\end{enumerate}
Then we can modify $f$ as to have these two properties on the lists $t_1 , \ldots , t_M$.

This inductive step is directly implied by Lemma~\ref{lem_inductive_step}.
\end{proof}
\section{The Proof of the Local Disjunction Lemma} 
\label{s:proof-second-lemma}

Our goal in this section is to prove Lemma~\ref{lem_whitney_trick}, which was used in the previous section to prove the sufficiency of the deleted product criterion in the $r$-metastable range.

Throughout this section, we assume that $m,d,r$ are positive integers satisfying \eqref{eq_metastable}. Furthermore, we will denote the sum of the dimensions $s_i$ of the balls $\sigma_i$ by
$$s:=s_1+\ldots +s_r.$$

The proof of Lemma~\ref{lem_whitney_trick} is essentially inductive: we reduce from $r$ balls to $(r-1)$ balls. The trick is to consider the intersection pattern of the first $(r-1)$ balls $\sigma_1 , \ldots , \sigma_{r-1}$ on $\sigma_r$. If each of the intersections $\sigma_i \cap \sigma_r$, $1\leq i\leq r-1$, were a ball properly embedded in $\sigma_r$, then we could solve the situation first at the level of $\sigma_r$ (\ie  remove the $(r-1)$-intersections between the $\sigma_i \cap \sigma_r$), and then extend the solution to $B$, thus completing the induction.

However, the intersections $\sigma_i \cap \sigma_r$ need not be balls, so our first task is to move $\sigma_1 , \ldots , \sigma_{r-1}$ inside $B$ as to modify their intersection with $\sigma_r$. As it will turn out, if we manage to increase sufficiently the connectedness of the intersections $\sigma_i \cap \sigma_r$, then Theorem~\ref{thm_Zeeman} becomes useful to reduce the situation (as in the proof of Lemma~\ref{lem_reduction}) in such a way that the intersections  
$\sigma_i \cap \sigma_r$ do become balls. For this to work, $\sigma_i \cap \sigma_r$ needs to be $ \dim(\sigma_1 \cap \cdots \cap \sigma_r) $-connected.

\subsection{Increasing the connectivity of the intersections}
\label{sec_increasing_the_connectivity}

\begin{proposition} \label{prop_surg}
With the same notations as in Lemma~\ref{lem_whitney_trick}, for each $i=1,\ldots,{r-1}$, there exists a proper ambient isotopy $H_t$ of $B$ such that 
$H_1(\sigma_i) \cap  \sigma_r$  is $\dim(\sigma_1 \cap \cdots \cap \sigma_r)$-connected,
and such that 
\begin{equation}\label{eq_surgery_homotopy_class_well_defined}
I \times \boundary (\sigma_1 \times \cdots \times \sigma_i \times \cdots \times \sigma_r) \xrightarrow{\incl \times \cdots \times H_t \times \cdots \times \incl} (B^d \times \cdots \times B^d) \setminus \delta_r(B^d)
\end{equation}
is well-defined, \ie  its image is disjoint from the diagonal $\delta_r(B^d)$.
\end{proposition}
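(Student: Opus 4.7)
The plan is to raise the connectivity of $\sigma_i \cap \sigma_r$ one degree at a time via iterated ``disk-exchange'' ambient isotopies of $B$, each supported in a small PL $d$-ball. Set $p := \dim(\sigma_1 \cap \cdots \cap \sigma_r)$, which by general position is at most $s - (r-1)d$; if $p < 0$ the statement is vacuous with $H_t = \id$, so assume $p \ge 0$. I would induct on $k \in \{0, 1, \ldots, p\}$ under the hypothesis that, after the isotopies constructed so far, $\sigma_i \cap \sigma_r$ is $(k{-}1)$-connected and the associated map in \eqref{eq_surgery_homotopy_class_well_defined} still avoids the thin diagonal $\delta_r(B^d)$.

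For the inductive step, pick a nontrivial class in $\pi_k(\sigma_i \cap \sigma_r)$. By standard PL general position in $\sigma_r$ and a dimension count powered by \eqref{eq_metastable}, this class can be realised by a PL embedding $S \iso S^k \embed \sigma_i \cap \sigma_r$. Since $\sigma_i$ and $\sigma_r$ are PL balls, $S$ bounds a $(k+1)$-disk $D_i \subset \interior{\sigma_i}$ and a $(k+1)$-disk $D_r \subset \interior{\sigma_r}$; a further generic perturbation of $D_i$ ensures $D_i \cap D_r = S$, so $\Sigma := D_i \cup_S D_r$ is a PL $(k+1)$-sphere in $\interior{B}$.

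The key move is then the ambient isotopy that swaps $D_i$ for $D_r$. Since $d - (k+1) \ge 3$ in the $r$-metastable range (because $k \le p$ and $d - m \ge 3$), Theorem~\ref{thm_Zeeman} applied to $\Sigma \subset \interior{B}$ yields a collapsible polyhedron $C \subset \interior{B}$ containing $\Sigma$ with $\dim C \le k + 2$. A regular neighborhood $N$ of $C \cup D_i \cup D_r$ in $B$ is a PL $d$-ball in $\interior{B}$ containing $D_i$ and $D_r$ as properly embedded $(k+1)$-disks with common boundary $S$; inside the ball $N$ they cobound a PL $(k+2)$-disk, and the Alexander trick produces a PL ambient isotopy of $N$ fixing $\partial N$ and throwing $D_i$ onto $D_r$. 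Extending it by the identity on $B \setminus N$ gives a proper ambient isotopy $H_t$ of $B$. After the swap, $H_1(\sigma_i)$ contains $D_r$ in a neighborhood of $S$, so $S$ becomes null-homotopic in $H_1(\sigma_i) \cap \sigma_r$ and the chosen class in $\pi_k$ is killed. A finite iteration exhausts $\pi_k$ and closes the induction.

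The main obstacle is the simultaneous \emph{general-position bookkeeping}: at every stage the polyhedra $S$, $D_i$, $D_r$, $C$, and $N$ must be chosen simultaneously so that (i) they are in sufficiently generic position with respect to each $\sigma_j$ and $\partial\sigma_j$ for $j \ne i, r$, (ii) they avoid $\partial\sigma_r$ (possible because $D_r \subset \interior{\sigma_r}$) and hence $\partial B$, and (iii) no new $r$-fold coincidence on $\partial(\sigma_1 \times \cdots \times \sigma_r)$ is created during the isotopy, so that \eqref{eq_surgery_homotopy_class_well_defined} continues to miss the thin diagonal. These are precisely the dimension counts that the $r$-metastable inequality \eqref{eq_metastable} is designed to supply: the potentially problematic intersection loci all have negative virtual dimension and hence are generically empty. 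Verifying them carefully, and checking that connectivity acquired in earlier inductive rounds is preserved by later surgeries, constitutes the technical heart of the argument.
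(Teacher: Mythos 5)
Your overall strategy---raise the connectivity of $\sigma_i\cap\sigma_r$ one degree at a time by ambient isotopies of $B$ supported in small balls around a capped-off sphere, with the diagonal-avoidance in \eqref{eq_surgery_homotopy_class_well_defined} secured because the support can be made disjoint from the $\sigma_j$, $j\neq i,r$---is the same as the paper's, and that part of your bookkeeping is sound. The genuine gap is in the core ``disk exchange'' step. An ambient isotopy of $N$ that merely throws $D_i$ onto $D_r$ gives you exactly one piece of information about the new intersection $H_1(\sigma_i)\cap\sigma_r$: that it contains $D_r$, so $[S]$ bounds there. It does \emph{not} follow that $\pi_j(H_1(\sigma_i)\cap\sigma_r)\cong\pi_j(\sigma_i\cap\sigma_r)$ for $j<k$, nor that the new $\pi_k$ is a quotient of the old one. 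The isotopy drags a full $s_i$-dimensional neighbourhood of $D_i$ in $\sigma_i$ into an uncontrolled position relative to $\sigma_r$ (indeed it places $D_i$ \emph{inside} $\sigma_r$, a non-generic position), and can create new components and new low-dimensional homotopy in the intersection. Without that control your induction does not close (connectivity gained earlier can be destroyed) and ``a finite iteration exhausts $\pi_k$'' is unsupported (new generators of $\pi_k$ may appear at every step). This is not a routine general-position check that the $r$-metastable inequality supplies; it genuinely fails for an arbitrary isotopy throwing one capping disk onto the other.

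What is missing is the framed-surgery structure that makes the move an honest surgery with computable effect on homotopy groups, and this is where the paper spends its effort. First $\sigma_r$ is unknotted and $\sigma_i$ is made block-transverse to it (Lemma~\ref{sublem_trivial_bundle_intersection}); then the class is represented by an embedded $S^k\subset\sigma_i\cap\sigma_r$ with \emph{trivial normal block bundle} in $\sigma_i\cap\sigma_r$, using $2k+1\le\dim(\sigma_i\cap\sigma_r)$ and stable triviality of the tangent block bundle (Lemma~\ref{sublem_trivial_bundle_S_k}); the framing is extended over a capping disk $D^{k+1}$ taken in $\sigma_r$ via Hudson's unknotting theorem (Lemma~\ref{sublem_unknot_ribbon}); and the resulting handle $T=D^{k+1}\times[-1,1]^{s_i+s_r-d-k}\times[-1,1]^{d-s_r}$ meets $\sigma_i$ and $\sigma_r$ in prescribed product pieces. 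The move is then an ambient surgery along $T$, so that $\sigma_i^*\cap\sigma_r$ is \emph{literally} the Milnor surgery of $\sigma_i\cap\sigma_r$ along the framed $S^k$, and Milnor's lemma gives the required statement that $\pi_j$ is preserved for $j<k$ and $\pi_k$ is quotiented by a subgroup containing $[S^k]$; a complementary handle plus Zeeman unknotting then realizes the modification as a proper ambient isotopy while keeping $\sigma_i$ a ball. If you wish to keep your picture, you must carry this framing and transversality data through your construction and verify that your isotopy, restricted to $\sigma_i$, effects precisely such a surgery near $S$; as it stands, the existence of the trivializations is itself a nontrivial claim (the paper needs block-bundle theory and the range $2k+1\le s_i+s_r-d$ for it), and your proposal neither states nor uses it.
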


\begin{proof} Proposition~\ref{prop_surg} follows directly by inductively using the Lemma~\ref{lem_surgey_step} (below), as in \cite[Lemma~2]{Milnor}.
\end{proof}

\begin{lemma}\label{lem_surgey_step}
\begin{enumerate}[(a)]
\item 
With the same notation as above, for all $
1 \leq k \leq \dim(\sigma_i \cap \cdots \cap \sigma_r)$
and $S^k \rightarrow \sigma_i \cap \sigma_r$ representing a homotopy class in $\pi_k(\sigma_i \cap \sigma_r)$, 
there exists a proper ambient isotopy $H_t$ of $B$ such that, for $j < k$,
\[
\pi_j ( H_1 ( \sigma_i) \cap \sigma_r) \iso \pi_j(\sigma_i \cap \sigma_r),
\]
and 
\[
\pi_k ( H_1 ( \sigma_i) \cap \sigma_r) \iso \pi_k (\sigma_i \cap \sigma_r) / \mbox{a subgroup containing $[S^k]$}. 
\]
\item 
An analoguous statement holds for $k=0$: If $\sigma_i\cap \sigma_r$ has more than one connected component, then there exists a proper ambient isotopy $H_t$ of $B$ such that $H_1(\sigma_i) \cap \sigma_r$ has one less connected component.
\end{enumerate}
In both cases (a) and (b) with have the following additional property of $H_t$:  the map~\eqref{eq_surgery_homotopy_class_well_defined} defined using $H_t$ avoids the diagonal.
\end{lemma}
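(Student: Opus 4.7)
The plan is to perform ambient surgery on the intersection $\sigma_i\cap \sigma_r$ to kill the class $[S^k]$, by pushing $\sigma_i$ along a suitable spanning disk for $S^k$ inside $\sigma_i$. This is a PL ambient-isotopy analogue of the standard surgery procedure \cite{Milnor}, with three ingredients: constructing a spanning disk in the right position, turning a regular neighbourhood of this disk into an ambient isotopy, and verifying the diagonal-avoidance condition~\eqref{eq_surgery_homotopy_class_well_defined} throughout.

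First, I would construct a PL $(k{+}1)$-disk $D\subset \sigma_i$ with $\partial D = S^k$ and $D\cap \sigma_r = S^k$. Since $\sigma_i$ is a ball, any cone on $S^k$ from an interior point of $\sigma_i$ gives a spanning disk; a general-position perturbation of $\interior{D}$ inside $\sigma_i$ additionally arranges that $\interior{D}$ is disjoint from every $\sigma_j$, $j\neq i$. The key dimension estimate uses $k\le \dim(\sigma_1\cap\cdots\cap \sigma_r)\le rm-(r-1)d$ together with \eqref{eq_metastable}: for any $j\ne i$,
\[
\dim(\interior{D} \cap \sigma_j)\;\le\;(k{+}1)+s_j-d\;\le\;(r{+}1)m-rd+1\;\le\;-2,
\]
so each such intersection is empty. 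One may also require $\interior{D} \subset \interior{\sigma_i}$ and disjoint from $\partial B$.

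Next, let $V$ be a small regular neighbourhood of $D$ in $B$. Since $D$ is collapsible, $V$ is a $d$-ball, $V\cap \sigma_i$ is an $s_i$-ball regular neighbourhood of $D$ in $\sigma_i$, and $V\cap \sigma_r$ is a regular neighbourhood of $S^k$ in $\sigma_r$. Inside $V$, and constant on $\partial V$, I would use a standard local PL ``finger move'' across $D$ to isotope $V\cap \sigma_i$ so that a neighbourhood of $S^k$ is removed from its intersection with $V\cap \sigma_r$; this performs $k$-surgery on $\sigma_i\cap \sigma_r$. Extending by the identity outside $V$ produces a proper ambient isotopy $H_t$ of $B$, and the usual cell-attachment calculation gives $\pi_j(H_1\sigma_i\cap \sigma_r)\iso \pi_j(\sigma_i\cap \sigma_r)$ for $j<k$, while $[S^k]$ lies in the kernel of the induced map on $\pi_k$.

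The main obstacle is verifying~\eqref{eq_surgery_homotopy_class_well_defined}, namely that the map from $I\times \partial(\sigma_1\times\cdots\times \sigma_r)$ into $(B^d)^r\setminus \delta_r(B^d)$ stays off the diagonal along $H_t$. Since $H_t$ is supported in $V$ and $V$ is disjoint from $\sigma_j$ for every $j\ne i,r$, a hypothetical diagonal point must lie in $V\cap \sigma_r$ together with $\partial\sigma_\ell$ for some $\ell\ne i$; arranging $V\subset \interior{B}$ forces $\partial\sigma_\ell\subset \partial B$ to be avoided, so this cannot happen. Part~(b), the case $k=0$, is analogous, with $D$ replaced by a PL arc in $\sigma_i$ joining two distinct components of $\sigma_i\cap \sigma_r$; the arc-version of the dimension count reads $1+s_r-d\le 1+m-d\le -2$, thanks to the codimension hypothesis $d-m\ge 3$ in~\eqref{eq_metastable}.
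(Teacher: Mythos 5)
There are two genuine gaps here, and the first is that your surgery disk is on the wrong side. You span $S^k$ by a disk $D\subset\sigma_i$ with $\interior{D}$ disjoint from $\sigma_r$, and then propose to push $\sigma_i$ ``across'' $D$. But a disk contained in $\sigma_i$ cannot guide an isotopy of $\sigma_i$: to modify $\sigma_i\cap\sigma_r$ by moving $\sigma_i$, the guiding disk must stick \emph{out of} $\sigma_i$, and for the result to be the surgered intersection it must lie \emph{in} $\sigma_r$. This is exactly what the paper constructs (Lemma~\ref{sublem_unknot_ribbon}): a ball $D^{k+1}\subset\sigma_r$ with $D^{k+1}\cap\sigma_i=\partial D^{k+1}=S^k$, thickened to a handle $T=D^{k+1}\times[-1,1]^{s_i+s_r-d-k}\times[-1,1]^{d-s_r}$ (Lemma~\ref{sublem_surgery_tube}); the surgery replaces $S^k\times D^{s_i+s_r-d-k}$ in the intersection by $D^{k+1}\times S^{s_i+s_r-d-k-1}$, and both pieces must live in $\sigma_r$. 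With your disk, ``removing a neighbourhood of $S^k$ from the intersection'' is not a well-defined operation --- whatever replaces the removed tube must have the same frontier, and you never say what it is. Moreover, the handle construction requires a trivialized normal tube of $S^k$ in $\sigma_i\cap\sigma_r$, extended over $D^{k+1}$, and requires $\sigma_i$ to be transverse to $\sigma_r$; the paper spends Lemmas~\ref{sublem_trivial_bundle_intersection} and~\ref{sublem_trivial_bundle_S_k} (block bundles, stable triviality of the tangent bundle) and Hudson's unknotting theorem (Theorem~\ref{thm_Hudson_unknotting_moving_boundary}) on this, and your proposal does not address framing at all.

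The second gap is that a single surgery cannot be realized by an ambient isotopy of $B$: an ambient isotopy preserves the homeomorphism type of $\sigma_i$, whereas the surgered manifold $\sigma_i^*=(\sigma_i\setminus(S^k\times[-1,1]^{s_i-k}))\cup(D^{k+1}\times\partial[-1,1]^{s_i-k})$ of Definition~\ref{def_amb_surg} is in general not a ball. So either your move is an honest isotopy supported in $V$ (and then it does not perform the surgery), or it performs the surgery (and then it is not an isotopy). The paper resolves this by attaching a \emph{complementary} handle $T^c$ whose attaching sphere is kept at positive distance from $\sigma_r$: the two surgeries cancel, $\sigma_i^{**}$ is again an $s_i$-ball (Lemma~\ref{sub_complementary_tube}), and only then does Zeeman's unknotting of balls convert the construction into the required proper ambient isotopy $H_t$, while the intersection with $\sigma_r$ retains the effect of the first surgery. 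This step has no counterpart in your argument. (Your final diagonal-avoidance observation --- that the support of the move misses every $\sigma_j$ with $j\neq i,r$ by general position, so no new $r$-fold points are created --- is essentially right and matches the paper's use of general position, but it does not rescue the two issues above.)
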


\begin{figure}
\begin{center}
\noindent  \begin{minipage}{0.45\textwidth}
\includegraphics[scale=1]{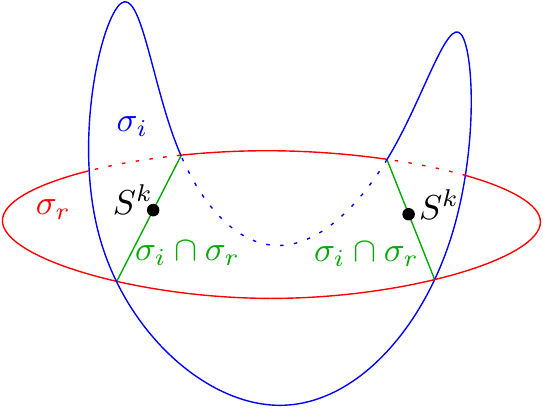}
\caption{$S^k$ represents a non-zero element of the homotopy group $\pi_k(\sigma_i \cap \sigma_r)$.}
\label{fig_surgery_beginning}
\end{minipage}
\hfill
\begin{minipage}{0.45\textwidth}
\includegraphics[scale=1]{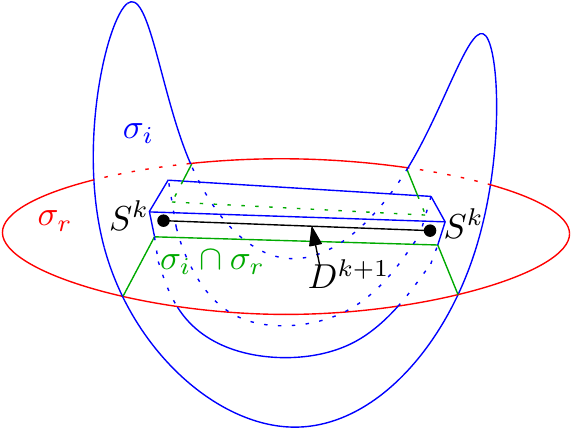}
\caption{By moving a sub-ball of $\sigma_i$ inside of $B$, we modify the intersection of $\sigma_i$ and $\sigma_r$ as to ``kill'' by surgery the homotopy class represented by $S^r \subseteq \sigma_i \cap \sigma_r$.}
\label{fig_surgery_end}
\end{minipage}
\end{center}
\end{figure}

\begin{figure}
\begin{center}
\includegraphics[scale=1]{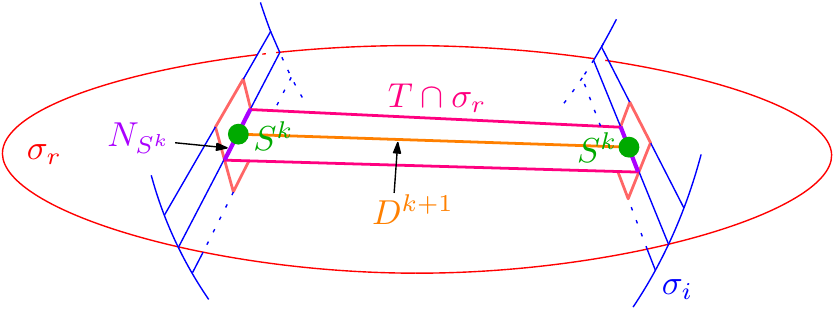}
\caption{The different steps in the construction of the handle used for the ambient surgery.}
\label{fig_surgery_tube}
\end{center}
\end{figure}

Here, we only present the proof of the part (a), i.e., for $k\ge 1$. For $k=0$, the construction is similar, and is aready presented in \cite{MabillardWagner15} as \emph{piping and unpiping}.

Our main technique in the proof is to use \emph{surgery} (as presented by Milnor \cite{Milnor}) to increase the connectivity of $\sigma_i \cap \sigma_r$. The precise definition of surgery used in our situation is given later (Definition~\ref{def_amb_surg}).

Figure~\ref{fig_surgery_beginning} illustrates the situation, and Figure~\ref{fig_surgery_end} tries to illustrate how we intend to `kill' a homotopy class of $S^k \in \pi_l(\sigma_i \cap \sigma_r)$ by surgery.

\begin{remark}
We decompose the proof of Lemma~\ref{lem_surgey_step} into a series of Lemmas. 

For the first two Lemmas, we need to use a PL analogous of vector bundles for smooth manifold. In the PL category, this analogous notion is called \emph{block bundles}. 
We review the part of this theory that we need in Appendix~\ref{chap_block_bundles}. (See also \cite{Bryant:Piecewise-linear-topology-2002} for a rapid introduction, or the original \cite{BBI}). We will need results from \cite{BBI,BBII}.

Since we only work in the PL category, we sometimes only say \emph{bundle} instead of \emph{block bundle}.
\end{remark}

First we render, once and for all, the intersections transverse: 

\begin{lemma}\label{sublem_trivial_bundle_intersection}
With the same notations as in Lemma~\ref{lem_whitney_trick},
we can assume that $\sigma_r$ is unknotted in $B^d$, i.e.,
\[
B^d = \sigma_r \times [-1, 1]^{d-s_r},
\]
and we can also assume that $\sigma_i$ intersects $\sigma_r$ transversely (Definition~\ref{def_transversality}), i.e., for $\epsilon>0$ small enough,  $\sigma_r\times \epsilon [-1,1]^{d-s_r}$ is a normal block bundle to $\sigma_r$ in $B^d$, and we have
\[
\sigma_i \cap  ( \sigma_r\times \epsilon [-1,1]^{d-s_r} )  = (\sigma_i \cap \sigma_r) \times \epsilon [-1,1]^{d-s_r}.
\] 
\end{lemma}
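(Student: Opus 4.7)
The plan is to handle the two assertions in sequence, both via standard PL topology tools applied inside $B^d$.

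For the unknotting: $\sigma_r$ is a PL $s_r$-ball properly embedded in the PL $d$-ball $B^d$, and the codimension satisfies $d - s_r \geq d - m \geq 3$ by hypothesis~\eqref{eq_metastable}. Zeeman's unknotting theorem for PL balls in codimension at least $3$ (see, e.g., \cite[Ch.~7]{Rourke:Introduction-to-piecewise-linear-topology-1982}) supplies a proper ambient isotopy of $B^d$ carrying $\sigma_r$ onto the standard slice $\sigma_r \times \{0\}$ of a product decomposition $B^d \cong \sigma_r \times [-1,1]^{d-s_r}$. We carry $\sigma_1, \ldots, \sigma_{r-1}$ along with this ambient isotopy; being ambient, it preserves all mutual intersection patterns and the properness of each $\sigma_i$, and in particular it does not change the intersection class $\alpha$ of Definition~\ref{def_intersection_class} (the induced change on~\eqref{eq_gauss_map} is an equivariant homotopy through maps avoiding $\delta_r(B^d)$).

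For the transversality: the slab $\sigma_r \times \epsilon[-1,1]^{d-s_r}$ is, for every $\epsilon > 0$, a canonical normal block bundle of $\sigma_r$ in $B^d$ (see Appendix~\ref{chap_block_bundles}). By the PL block-transversality theorem of Rourke--Sanderson \cite{BBI,BBII}, applied to the disjoint union $\sigma_1 \sqcup \cdots \sqcup \sigma_{r-1}$ mapping into $B^d$, there exists an arbitrarily small proper ambient isotopy of $B^d$, supported in a neighborhood of $\sigma_r$, after which every $\sigma_i$ is simultaneously block-transverse to $\sigma_r$. This means that $\sigma_i \cap (\sigma_r \times \epsilon[-1,1]^{d-s_r})$ is the total space of a block sub-bundle over the base $\sigma_i \cap \sigma_r$, for $\epsilon$ sufficiently small. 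Since block bundles over a ball are trivial (and $\sigma_i \cap \sigma_r$, sitting inside the ball $\sigma_i$, is collapsible so that its block bundles are products on a regular neighborhood), the sub-bundle is a product, giving the desired equality $\sigma_i \cap (\sigma_r \times \epsilon[-1,1]^{d-s_r}) = (\sigma_i \cap \sigma_r) \times \epsilon[-1,1]^{d-s_r}$.

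The point that requires care is the compatibility of these two isotopies with the data inherited from Lemma~\ref{lem_whitney_trick}: the intersection class $\alpha$ must be preserved, the boundary condition ($\sigma_i$ properly embedded) maintained, and no spurious $r$-fold incidences introduced \emph{during} the isotopy. Properness is built into both steps. The other two conditions are ensured by taking the transversality isotopy $C^0$-small enough that only the intersection pattern in a controlled neighborhood of $\sigma_r$ is altered, and by invoking general position along the isotopy trajectory to keep the associated Gauss map away from $\delta_r(B^d)$; thus the map~\eqref{eq_gauss_map} is modified only by an equivariant homotopy through maps avoiding the thin diagonal, and $\alpha$ is unchanged. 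I expect the verification of transversality in the PL (as opposed to smooth) setting to be the main technical hurdle, which is precisely why block bundles rather than tubular neighborhoods are invoked.
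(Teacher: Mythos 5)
Your proposal is correct and follows essentially the same route as the paper: Zeeman's unknotting of balls in codimension $\geq 3$ for the first claim, and the Rourke--Sanderson block-bundle transversality theorem (an $\epsilon$-isotopy making $\sigma_i$ locally transverse to $\sigma_r$) for the second. Two small remarks: the paper makes the transversality isotopy fixed on $\partial B$ via an explicit collar argument (properness is not automatic from the transversality theorem, nor from support ``near $\sigma_r$,'' since $\partial\sigma_r\subset\partial B$), and your separate argument for triviality of the restricted sub-bundle is unnecessary, because the ambient normal bundle is already the global product $\sigma_r\times\epsilon[-1,1]^{d-s_r}$, so its restriction to $\sigma_i\cap\sigma_r$ is a product by inspection.
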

\begin{proof}
The first statement follows from Zeeman's Unknotting of balls. The second statement follows by Theorem~\ref{thm_transversality}: there exists an $\epsilon$-isotopy of $B$ carrying $\sigma_i$ locally transverse to $\sigma_r$. Using a collar on $\boundary B$, we can furthermore assume that this isotopy is fixed on  $\boundary B$.
\end{proof}

\begin{remark}
In the sequence of lemmas that follows, $S^k \rightarrow \sigma_i \cap \sigma_r$ represents an homotopy class in $\pi_k(\sigma_i \cap \sigma_r) $, which we want to ``kill''.
\end{remark}

\begin{observation} We have
\[
2 \dim (S^k) + 1 \leq \dim (\sigma_i \cap \sigma_r).
\] 
Indeed, this is the case if
\[
2 k + 1 \leq s_i + s_r -d 
\]
So we have to show
\[
k < \frac{s_i + s_r-d}2
\]
Since $k \leq s - (r-1) d $, it is sufficient to show 
\[
s - (r-1) d
< \frac{s_i + s_r -d } 2
\]
and, after rearrangement and using $s_i \leq m$, we get the sufficient condition
\[
2(r-1)m < (2r -3)d,
\quad
\text{
which is the case since}
\quad
\frac{2(r-1)}{2r-3} m \leq \frac{r+1}r m < d,
\]
where the first inequality is true for $r \ge 3$, and the second follows from the metastable range.
\end{observation}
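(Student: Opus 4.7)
This is a pure dimension count, so the plan is to turn both sides of the inequality into explicit arithmetic expressions in $s_1,\ldots,s_r$ and $d$, reduce to a numerical bound involving only $m$, $d$, and $r$, and then verify that the $r$-metastable hypothesis~\eqref{eq_metastable} together with $r \geq 3$ suffices.

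First I would evaluate the two dimensions. Since $S^k$ was chosen with $k \leq \dim(\sigma_1 \cap \cdots \cap \sigma_r)$, general position gives $\dim(S^k) = k \leq s - (r-1)d$. Lemma~\ref{sublem_trivial_bundle_intersection} makes $\sigma_i$ meet $\sigma_r$ transversely in $B$, so $\dim(\sigma_i \cap \sigma_r) = s_i + s_r - d$. The desired inequality therefore reduces to
\[
2\bigl(s - (r-1)d\bigr) + 1 \;\leq\; s_i + s_r - d,
\]
or, after rearrangement, $2s - s_i - s_r + 1 \leq (2r-3)d$.

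Next I would bound the left-hand side. Writing $2s - s_i - s_r = (s - s_i) + (s - s_r)$ expresses it as a sum of $2(r-1)$ terms, each some $s_j \leq m$, so $2s - s_i - s_r \leq 2(r-1)m$. It is thus enough to establish $2(r-1)m + 1 \leq (2r-3)d$, or equivalently the strict inequality $\tfrac{2(r-1)}{2r-3}\,m < d$ (which is equivalent for integer parameters).

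Finally I would check this last inequality via an intermediate comparison with $\tfrac{r+1}{r}\,m$. For $r \geq 3$, cross-multiplying shows $\tfrac{2(r-1)}{2r-3} \leq \tfrac{r+1}{r}$, as the comparison $2r(r-1) \leq (r+1)(2r-3)$ simplifies to $r \geq 3$; on the other hand, the $r$-metastable range $rd \geq (r+1)m + 3$ upgrades to $d > \tfrac{r+1}{r}\,m$ thanks to the ``$+3$'' slack. Chaining these closes the argument. The main thing to flag is that both $r \geq 3$ and the slack of $3$ in~\eqref{eq_metastable} are genuinely used here: the boundary cases $r = 2$ and codimension~$2$ are exactly the situations where this dimension count (and the whole surgery approach for Lemma~\ref{lem_surgey_step}) breaks down, consistent with the restrictions flagged in Remarks~\ref{remarks-main}.
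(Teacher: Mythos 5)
Your proposal is correct and follows essentially the same chain of reductions as the paper: bound $k$ by $s-(r-1)d$, reduce to $2(r-1)m < (2r-3)d$ via $s_j \leq m$, and close by comparing $\tfrac{2(r-1)}{2r-3}$ with $\tfrac{r+1}{r}$ using $r \geq 3$ and the metastable range. The only additions are minor bookkeeping (explicitly invoking transversality for $\dim(\sigma_i\cap\sigma_r)=s_i+s_r-d$ and the integrality step), which do not change the argument.
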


\begin{lemma}\label{sublem_trivial_bundle_S_k}
In the situation given by Lemma~\ref{sublem_trivial_bundle_intersection},
let $a: S^k \rightarrow \sigma_i \cap \sigma_r$ represents an homotopy class in $\pi_k(\sigma_i \cap \sigma_r) $. Then there exists an embedded copy of $S^k \subset \sigma_i \cap \sigma_r$ such that its inclusion map is homotopic to $a$, and with  the two additional properties: 
\begin{enumerate}[(1)]
\item  the normal block bundle of $S^k \subset \sigma_i \cap \sigma_r$ is trivial.
\item 
Let $N_{S^k}$ be a regular neighborhood of $S^k$ inside $\sigma_i \cap \sigma_r$.  Then 
\[
N_{S^k} \iso S^k \times [-1,1]^{ s_i + s_r - d -k  },
\]
containing $S^k$ as $S^k \times 0 $.
\end{enumerate}
\end{lemma}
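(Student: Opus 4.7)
The plan is to first realize $a$ by an embedded PL sphere using general position, then show the normal block bundle of this sphere in $\sigma_i \cap \sigma_r$ is trivial by filling a disk in the ambient ball $\sigma_i$, and finally extract the product neighborhood structure.

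\medskip

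\textbf{Step 1 (Embedding).} The observation preceding the statement gives $2k+1 \le s_i + s_r - d = \dim(\sigma_i \cap \sigma_r)$, which puts us in the PL stable embedding range. Standard PL general position (e.g.\ the Penrose--Whitehead--Zeeman embedding theorem) then lets me homotope the given map $a \colon S^k \to \sigma_i \cap \sigma_r$ to a PL embedding, which by abuse of notation I continue to call $S^k \subset \sigma_i \cap \sigma_r$.

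\medskip

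\textbf{Step 2 (Triviality of the normal block bundle).} The key idea is to use the fact that $\sigma_i$ is a ball in order to force triviality. From the metastable observation combined with $s_r \le m$ and $d-m \ge 3$, I get $s_i \ge 2k+1 + (d-s_r) \ge 2k+4$, so $S^k$ sits in $\sigma_i$ in codimension $\ge k+4 \ge 3$. By Zeeman's unknotting of spheres in a ball, the embedded $S^k$ bounds a PL $(k{+}1)$-ball $D^{k+1} \subset \sigma_i$. The normal block bundle of $D^{k+1}$ in $\sigma_i$ is a block bundle over a contractible base, hence trivial, and restricts to a trivialization of $\nu(S^k, \sigma_i)$. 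On the other hand, Lemma~\ref{sublem_trivial_bundle_intersection} arranged that $\sigma_r$ is unknotted in $B = \sigma_r \times [-1,1]^{d-s_r}$ and that $\sigma_i$ meets $\sigma_r$ transversally, so the normal block bundle of $\sigma_i \cap \sigma_r$ inside $\sigma_i$ is pulled back from the trivial normal bundle of $\sigma_r$ in $B$, and is in particular trivial. Restricting the block bundle decomposition
\[
\nu(S^k, \sigma_i) \;\cong\; \nu(S^k, \sigma_i \cap \sigma_r) \;\oplus\; \nu(\sigma_i \cap \sigma_r, \sigma_i)\big|_{S^k}
\]
and using that both $\nu(S^k, \sigma_i)$ and the second summand are trivial, I conclude that $\nu(S^k, \sigma_i \cap \sigma_r)$ is stably trivial. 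Since its rank equals $s_i + s_r - d - k \ge k+1 > k$, the PL stability theorem for block bundles over $S^k$ (the analogue of $\pi_{k-1}(O(n)) \to \pi_{k-1}(O)$ being an isomorphism for $n > k$, stated e.g.\ in [BBII]) upgrades stable triviality to triviality.

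\medskip

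\textbf{Step 3 (Product regular neighborhood).} A choice of trivialization of $\nu(S^k, \sigma_i \cap \sigma_r)$ identifies a regular neighborhood $N_{S^k}$ in $\sigma_i \cap \sigma_r$ with the total space $S^k \times [-1,1]^{s_i + s_r - d - k}$, containing $S^k$ as $S^k \times \{0\}$; uniqueness of regular neighborhoods handles any dependence on the choice.

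\medskip

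The main obstacle is the block bundle manipulations of Step 2: the splitting of normal bundles, and especially the ``stably trivial implies trivial'' step, need to be carried out in the PL category using block bundles rather than vector bundles. These are precisely the facts the authors import from \cite{BBI, BBII}, and I would appeal to them rather than re-derive them. Getting the transversality and framing conventions on $\sigma_i \cap \sigma_r$ to line up cleanly with those already fixed in Lemma~\ref{sublem_trivial_bundle_intersection} is the other technical point that requires care.
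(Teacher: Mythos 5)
Your proposal is correct, and it follows the same overall strategy as the paper (general position embedding in the range $2k+1\le\dim(\sigma_i\cap\sigma_r)$, stable triviality of a block bundle coming from the fact that $\sigma_i$ is a ball together with the transversality arranged in Lemma~\ref{sublem_trivial_bundle_intersection}, destabilization via the stability theorem of \cite{BBII}, and the regular-neighborhood/normal-block-bundle correspondence). The one place where you organize the argument differently is the middle step: the paper shows that the \emph{tangent} bundle of $\sigma_i\cap\sigma_r$ is stably trivial, via $t(\sigma_i\cap\sigma_r)\oplus\epsilon^{d-s_r}=t(\sigma_i)|(\sigma_i\cap\sigma_r)=\epsilon^{s_i}$, and then invokes its packaged PL version of Milnor's Theorem~2 (Proposition~\ref{prop_s_k_in_M_pi_manifold}); you instead trivialize $\nu(S^k,\sigma_i)$ directly by filling a disk $D^{k+1}$ in $\sigma_i$ and then split off the trivial bundle $\nu(\sigma_i\cap\sigma_r,\sigma_i)|_{S^k}\cong\epsilon^{d-s_r}$. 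Both routes bottom out in the identical destabilization computation (rank $\ge k+1$ over $S^k$, Theorem~\ref{thm_stability_of_bb}), so nothing essential is gained or lost; your detour through Zeeman unknotting is in fact dispensable, since $t(\sigma_i)$ being trivial already gives stable triviality of $\nu(S^k,\sigma_i)$, which is all your argument uses. Two small points of care: the restriction of $\nu(D^{k+1},\sigma_i)$ to $S^k=\partial D^{k+1}$ has rank $s_i-k-1$, not $s_i-k$, so one must add the bicollar summand $\nu(S^k,D^{k+1})\cong\epsilon^1$ before calling it a trivialization of $\nu(S^k,\sigma_i)$; and the unstable splitting $\nu(S^k,\sigma_i)\cong\nu(S^k,\sigma_i\cap\sigma_r)\oplus\nu(\sigma_i\cap\sigma_r,\sigma_i)|_{S^k}$ is only needed (and only cleanly available from Theorem~\ref{thm_tangent_normal} plus cancellation) in its stable form, which is exactly how you use it.
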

\begin{proof}
The existence of the embedded copy of $S^k$ follows by general position (and the above observation).

The first property follows from Theorem~\ref{prop_s_k_in_M_pi_manifold} from Appendix~\ref{chap_block_bundles}: we only need to check that the tangent bundle of $\sigma_i \cap \sigma_r$ is stably trivial. To see it: let us consider the normal bundle of $\sigma_i \cap \sigma_r$ in $\sigma_i$ which is isomorphic to $\epsilon^{d-s_r}$ (by hypothesis), hence (using notations defined in Appendix~\ref{chap_block_bundles} and Theorem~\ref{thm_tangent_normal})
\[
t(\sigma_i \cap \sigma_r) \oplus \epsilon^{d-s_r} 
= t(\sigma_i) | (\sigma_i \cap \sigma_r ) 
= \epsilon^{i} /(\sigma_i \cap \sigma_r ).
\]
I.e., $t(\sigma_i \cap \sigma_r) / ( \sigma_i \cap \sigma_r)$ is stably trivial. 

The second property about $N_{S^k}$ follows by the correspondence between regular neighborhoods and normal block bundles (Theorem~\ref{thm_corr_regular_neighborhood_bb} in Appendix~\ref{chap_block_bundles}).
\end{proof}

\begin{lemma}\label{sublem_unknot_ribbon}
In the situation given by Lemma~\ref{sublem_trivial_bundle_S_k}, with $S^k \subset \sigma_i \cap \sigma_r$ and the two additional properties.
There exists a ball $D^{k+1}$ in $\sigma_r$ with 
\[
D^{k+1} \cap \sigma_i = \boundary D^{k+1} = S^k,
\]
and which avoids the other $\sigma_j$.

Furthermore, the trivialisation of $N_{S^k}$ can be extended to $D^{k+1}$, i.e., there exists in $\sigma_r$ 
\begin{equation} \label{eq_def_iso_D}
N_{D^{k+1}} \iso D^{k+1} \times [-1,1]^{ s_i + s_r - d -k  }
\end{equation}
containing $D^{k+1}$ as $D^{k+1} \times 0$ and with 
\[
N_{D^{k+1}} \cap \sigma_i = N_{S^k} \iso S^k \times [-1,1]^{ s_i + s_r - d -k  },
\]
and this last homeomorphism is the restriction of \eqref{eq_def_iso_D}.
\end{lemma}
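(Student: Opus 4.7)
The plan is to first construct the PL disk $D^{k+1} \subset \sigma_r$ with the prescribed intersection pattern, and then to extend the given trivialization of $N_{S^k}$ across $D^{k+1}$ in the block-bundle framework of Appendix~\ref{chap_block_bundles}.

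\textbf{Constructing the disk.} Since $\sigma_r$ is a contractible PL ball, $S^k$ is nullhomotopic in $\sigma_r$. The observation $2k + 1 \le s_i + s_r - d$ preceding Lemma~\ref{sublem_trivial_bundle_S_k}, combined with $d - s_i \ge d - m \ge 3$, gives $s_r - k \ge k + 4 \ge 5$, so Zeeman's unknotting theorem produces an embedded PL disk $D_0^{k+1} \subset \sigma_r$ with $\boundary D_0^{k+1} = S^k$. A general-position perturbation inside $\sigma_r$ fixing $S^k$ makes $D_0^{k+1}$ transverse to each $\sigma_j \cap \sigma_r$: the interior of $D_0^{k+1}$ meets $\sigma_i \cap \sigma_r$ in a submanifold of dimension $k + 1 + s_i - d$ and meets $\sigma_j \cap \sigma_r$ (for $j \ne i, r$) in dimension at most $k - 2$. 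I then successively remove each such interior component by a piping/Whitney-style move inside $\sigma_r$: the codimension $d - s_j \ge 3$ of $\sigma_j \cap \sigma_r$ in $\sigma_r$ provides enough connectivity in the complement to push these components off. The resulting $D^{k+1}$ satisfies $D^{k+1} \cap \sigma_i = S^k$ and $D^{k+1} \cap \sigma_j = \emptyset$ for $j \ne i, r$, and still meets $\sigma_i \cap \sigma_r$ transversely along $S^k$.

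\textbf{Extending the trivialization.} Let $\nu$ denote the normal block bundle of $D^{k+1}$ in $\sigma_r$. Over $S^k$ it splits as $\nu|_{S^k} = N_{S^k} \oplus \mu$, with $N_{S^k}$ tangent to $\sigma_i \cap \sigma_r$ (by the transversality arrangement) and $\mu$ of rank $d - s_i \ge 3$. Because $D^{k+1}$ is collapsible, every block bundle over it is trivial, and by the block-bundle obstruction theory of Appendix~\ref{chap_block_bundles} the trivial sub-block-bundle $N_{S^k}$ over $S^k$ extends to a trivial sub-block-bundle $N_{D^{k+1}} \iso D^{k+1} \times [-1,1]^{s_i + s_r - d - k}$ of $\nu$ restricting to $N_{S^k}$ on $S^k$. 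Choosing the fibers uniformly thin and combining the interior disjointness $\interior{D^{k+1}} \cap \sigma_i = \emptyset$ with the transverse splitting at $S^k$ then yields the equality $N_{D^{k+1}} \cap \sigma_i = N_{S^k}$.

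\textbf{The main obstacle} is the removal of interior intersections in the first step. By general position alone, $\interior{D_0^{k+1}} \cap (\sigma_i \cap \sigma_r)$ can have positive dimension, which happens precisely when $k \ge d - s_i - 1$; this is permitted in the $r$-metastable range. The high codimension $\ge 3$ of each $\sigma_j \cap \sigma_r$ in $\sigma_r$, together with the dimensional slack from~\eqref{eq_metastable}, is what makes the piping/Whitney move go through. Once the disk has been arranged with clean intersections, the bundle-theoretic extension is a direct obstruction-theoretic argument.
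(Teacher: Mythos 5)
Your construction of the disk reaches the right conclusion but misdiagnoses where the work lies. In the metastable range one has $k \le s-(r-1)d \le d-s_i-3$ (from $rd\ge(r+1)m+3$ and $s\le rm$, $s_i\le m$), so a generic $(k+1)$-disk in $\sigma_r$ bounding $S^k$ meets $\sigma_i\cap\sigma_r$ in dimension $k+1+s_i-d<0$ away from its boundary: general position alone already gives $\interior{D}^{k+1}\cap\sigma_i=\emptyset$, and likewise for the other $\sigma_j$. Your claim that positive-dimensional interior intersections ``can'' occur when $k\ge d-s_i-1$ and that this ``is permitted in the $r$-metastable range'' is false --- the range excludes it --- so the piping/Whitney step you present as the main obstacle is vacuous. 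This is harmless for correctness (the paper's own proof of the first statement is exactly ``general position and the metastable range hypothesis''), but it means the part of your argument you flag as essential is not doing anything, and the vague ``piping/Whitney-style move'' would itself need justification if it were ever invoked.

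The genuine gap is in the second step. You extend the trivialization of $N_{S^k}$ over $D^{k+1}$ by appealing to ``the block-bundle obstruction theory of Appendix~\ref{chap_block_bundles},'' but the appendix contains no such obstruction theory --- only the basic definitions, stability, transversality, and the triviality of normal bundles of spheres in stably parallelizable manifolds. The point is not that some bundle over the collapsible $D^{k+1}$ is trivial (that is automatic); it is that a \emph{given} trivialization over $S^k=\boundary D^{k+1}$ must extend compatibly, and the obstruction to this lives in $\pi_k$ of a PL Stiefel manifold. One must verify that this group vanishes in the relevant range, which you do not do; the paper explicitly mentions this route (citing \cite[p.~274]{BBII}) only to avoid it. Instead it takes a regular neighborhood $V$ of $D^{k+1}$ in $\sigma_r$ with $V\cap\sigma_i=N_{S^k}$ and applies Hudson's Unknotting Theorem in the form of Corollary~\ref{cor_hudson_unknotting_ribbons}: the proper embedding $S^k\times[-1,1]^{s_i+s_r-d-k}\embeds V\iso B^{s_r}$ is ambient isotopic to a standard one provided $s_r\ge(s_i+s_r-d)+k+3$, i.e.\ $k\le d-s_i-3$, which is exactly the inequality above; one then transports the standard extension $N_{D^{k+1}}$ back by the isotopy, which also yields $N_{D^{k+1}}\cap\sigma_i=N_{S^k}$. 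To repair your proof you must either carry out the Stiefel-manifold computation you are implicitly invoking, or replace that step by an unknotting argument of this kind; note also that your obstruction-theoretic phrasing does not by itself deliver the geometric condition $N_{D^{k+1}}\cap\sigma_i=N_{S^k}$, which requires first confining $\sigma_i$'s intersection with a neighborhood of $D^{k+1}$ to $N_{S^k}$.
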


\begin{remark}
For proving the second part of Lemma~\ref{sublem_unknot_ribbon}, we could use the PL-analogue of  Stiefel manifolds \cite[p.~274]{BBII}: the obstruction to extending the  trivialisation of $S^k$ is always trivial in the metastable range. But, to avoid entering more deeply into the theory of block bundles, we rather use the following unknotting theorem of Hudson\footnote{This step follows a proof of A. Skopenkov in \cite[p.~9]{skopenkov2005new}.}:

\begin{theorem}[{\cite[Unknotting Theorem Moving the Boundary, 10.2, p.~199]{Hudson}}] \label{thm_Hudson_unknotting_moving_boundary}
If $f,g :M^m \rightarrow Q^q$ are proper PL embeddings between manifolds $M$ and $Q$. 
Then $f,g$ homotopic as maps of pairs $(M, \boundary M) \rightarrow (Q, \boundary Q)$ implies that $f,g$ are ambient isotopic provided that 
\begin{itemize}
\item $M$ is compact
\item  $q-m \ge 3$
\item $(M,\boundary M)$ is $(2m-q+1)$-connected
\item $(Q,\boundary Q)$ is $(2m-q+2)$-connected
\end{itemize}
\end{theorem}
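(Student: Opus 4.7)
My plan is to split the theorem into two steps: (1) upgrade the given pair-homotopy to a proper PL \emph{concordance} $F\colon M\times I \hookrightarrow Q\times I$ with $F|_{M\times\{0\}}=f$, $F|_{M\times\{1\}}=g$, and $F(\partial M\times I)\subset\partial Q\times I$; and (2) apply the PL concordance-implies-isotopy theorem in codimension $\ge 3$ (a standard companion result in Hudson's book) to convert $F$ into the desired ambient isotopy. Step~(2) will be used as a black box; the actual content lies in step~(1).

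For step~(1), I start with the hypothesised homotopy of pairs $\{h_t\}_{t\in I}$ and form the level-preserving PL map
\[
H\colon M\times I \to Q\times I,\qquad H(x,t)=(h_t(x),\,t),
\]
which is proper, restricts to $f\sqcup g$ on $M\times\{0,1\}$, and sends $\partial M\times I$ into $\partial Q\times I$. After a preliminary adjustment on the boundary (using a collar and the lower-dimensional case of the theorem applied to $f|_{\partial M}\simeq g|_{\partial M}$), I may assume $H$ is already an embedding on all of $\partial(M\times I)$. Relative general position then places the double-point set $S(H)\subset \mathrm{int}(M\times I)$ in dimension at most $2(m+1)-(q+1)=2m-q+1$.

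To desingularise $H$, I would invoke engulfing. The standard product-of-pairs formula for connectivity yields that $(M\times I,\partial(M\times I))$ is $(2m-q+2)$-connected and $(Q\times I,\partial(Q\times I))$ is $(2m-q+3)$-connected, exactly under the stated hypotheses. By Zeeman's engulfing (Theorem~\ref{thm_Zeeman}, after pushing $S(H)$ away from the boundary using the collar), there is a collapsible subpolyhedron $C\subset \mathrm{int}(M\times I)$ of dimension at most $2m-q+2$ containing $S(H)$; engulfing once more in $Q\times I$ produces a collapsible $D\subset\mathrm{int}(Q\times I)$ of dimension at most $2m-q+3$ containing $H(C)$. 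Regular neighbourhoods $N(C)$ and $N(D)$ are then PL balls, $H(N(C))\subset N(D)$, the entire singular set of $H$ is confined to $N(C)$, and $H|_{\partial N(C)}\colon \partial N(C)\hookrightarrow\partial N(D)$ is an embedding in codimension $\ge 3$. By Zeeman's unknotting of balls in balls, $H|_{N(C)}$ can be replaced by a PL embedding agreeing with $H$ on $\partial N(C)$, yielding the desired embedded concordance $F$, to which step~(2) applies to finish the proof.

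The hard part is the desingularization in step~(1), and the connectivity hypotheses are tight in precisely the way the engulfing step demands: $(2m-q+1)$-connectedness of $(M,\partial M)$ is the exact threshold needed to engulf $S(H)$, and $(2m-q+2)$-connectedness of $(Q,\partial Q)$ is the exact threshold needed to engulf $H(C)$. The codimension assumption $q-m\ge 3$ enters in three independent places --- general position for $S(H)$, Zeeman's engulfing, and Zeeman's unknotting of balls --- and is furthermore essential for the black-box concordance-implies-isotopy step invoked in~(2); dropping any one of the four hypotheses breaks the argument at a specific identifiable point.
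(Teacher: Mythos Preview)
The paper does not prove this theorem: it is quoted verbatim from Hudson's book \cite[10.2, p.~199]{Hudson} and used as a black box, solely to derive Corollary~\ref{cor_hudson_unknotting_ribbons} (unknotting of ribbons) and thence Lemma~\ref{sublem_unknot_ribbon}. There is therefore no ``paper's own proof'' to compare your proposal against.

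For what it is worth, your plan is a reasonable outline of the standard approach and is broadly in the spirit of Hudson's own argument: reduce to the concordance-implies-isotopy theorem in codimension~$\ge 3$, and manufacture the concordance by desingularising the track of the given homotopy via engulfing. If you want to turn the plan into an actual proof, two places deserve care. First, your appeal to ``the lower-dimensional case of the theorem'' on $\partial M\hookrightarrow\partial Q$ to straighten the boundary is circular as written; you need either an explicit induction on $\dim M$ (checking that the boundary pair again satisfies the hypotheses) or a separate argument for the boundary. Second, the engulfing you invoke is the \emph{relative} version---engulfing a compactum from $\partial(M\times I)$ using the connectivity of the pair $(M\times I,\partial(M\times I))$---whereas Theorem~\ref{thm_Zeeman} in this paper is the absolute statement (engulf from a point in a $k$-connected manifold). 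The relative version is what is actually needed, and is also in Zeeman's notes, but it is not the statement cited here.
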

\end{remark}
A consequence of Hudson Theorem is
\begin{corollary}
\label{cor_hudson_unknotting_ribbons}
Let $f,g : S^{k} \times [-1,1]^{m-k} \embeds B^q$ be two proper embeddings, then $f$ and $g$ are ambient isotopic, provided
\[ 
q \ge m+k+3.
\]
\end{corollary}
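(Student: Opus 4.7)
The plan is to apply Hudson's Unknotting Theorem Moving the Boundary (Theorem~\ref{thm_Hudson_unknotting_moving_boundary}) directly with $M := S^k \times [-1,1]^{m-k}$ (of dimension $m$) and $Q := B^q$. This splits the work into two parts: (a) verifying the four numerical hypotheses of that theorem, and (b) producing a homotopy of pairs from $f$ to $g$.

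For (a), compactness of $M$ is clear, and the codimension condition $q - m \ge 3$ follows from $q - m \ge k + 3$. For the connectivity of $(M, \partial M) = (S^k \times D^{m-k}, S^k \times S^{m-k-1})$, I would start from the fact that $(D^{m-k}, S^{m-k-1})$ is $(m-k-1)$-connected and use that, on simply connected factors, the homotopy groups of a product split as a direct sum (or equivalently run a short Künneth-type argument in the long exact sequence of the pair). This shows that $(M, \partial M)$ inherits $(m-k-1)$-connectivity. Since $q \ge m + k + 3$ gives $2m - q + 1 \le m - k - 2$, the required $(2m-q+1)$-connectivity is comfortably satisfied. For the target, $(B^q, S^{q-1})$ is $(q-1)$-connected, and the bound $q - 1 \ge 2m - q + 2$ reduces again to $q \ge m + k + 3$, so this hypothesis is also met.

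For (b), since $\partial Q = S^{q-1}$ is $(q-2)$-connected with $q - 2 \ge m + k + 1 \ge m - 1 = \dim \partial M$, the restrictions $f|_{\partial M}$ and $g|_{\partial M}$ are homotopic as maps $\partial M \to \partial Q$. Contractibility of $Q = B^q$ then lets one extend this boundary homotopy, without obstruction, to a homotopy of pairs $(M, \partial M) \to (Q, \partial Q)$ between $f$ and $g$. Hudson's theorem then delivers the desired ambient isotopy carrying $f$ to $g$.

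The only mildly delicate step is the clean verification of the relative connectivity of $(M, \partial M)$; the remainder is a matter of tallying indices and exploiting the contractibility of $B^q$.
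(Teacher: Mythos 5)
Your proposal is correct and follows essentially the same route as the paper: reduce to Hudson's Unknotting Theorem (Theorem~\ref{thm_Hudson_unknotting_moving_boundary}) and verify the $(2m-q+1)$-connectivity of the pair $\bigl(S^k\times[-1,1]^{m-k},\,S^k\times S^{m-k-1}\bigr)$ via the long exact sequence (equivalently, the product formula for relative homotopy groups), with the arithmetic coming out to $q\ge m+k+3$. You additionally spell out why any two such proper embeddings are homotopic as maps of pairs (using the high connectivity of $S^{q-1}$ and the contractibility of $B^q$), a hypothesis of Hudson's theorem that the paper's proof leaves implicit; this is a welcome completion rather than a deviation.
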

\begin{proof}
Since $q-m \ge k+3 \ge 3$, and $(B^q, \boundary B^q)$ is clearly sufficently connected, we only need to analyse the connectivity of the pair
\[
(S^k \times [-1,1]^{m-k} , S^k \times S^{m-k-1})
\]
which we need to be $ ( 2m-q+1)$-connected. Let us consider the exact sequence in homotopy for this pair
\[
\cdots \rightarrow
\pi_i( \underbrace{ S^k \times S^{m-k-1}}_{:=\boundary X}) 
\rightarrow \pi_i( \underbrace{S^k \times [-1,1]^{m-k}}_{:=X}) 
\rightarrow \pi_i( X, \boundary X)
\rightarrow \cdots
\]
For $i<m-k-1$, the above sequence can be rewritten as
\[
\cdots \rightarrow
\pi_i( S^k ) 
\rightarrow \pi_i(S^k )
\rightarrow \pi_i( X ,\boundary X)
\rightarrow \cdots
\]
Since the $\pi_i( S^k ) 
\rightarrow \pi_i(S^k )$ is an isomorphism, we get $\pi_i(X, \boundary X) =0$ as long as $i < m-k-1$. So we are left with checking 
\[
2m -q + 1 < m-k -1,
\]
I.e., $m+k+2 < q$.
\end{proof}

We can now proceed to the proof of Lemma~\ref{sublem_unknot_ribbon}:
\begin{proof}[Proof of Lemma~\ref{sublem_unknot_ribbon}]
The first statement follows by general position and the metastable range hypothesis.

To prove the existence of $N_{D^{k+1}}$, let us first take a regular neighborhood $V$ of $D^{k+1}$ in $\sigma_r$. We can assume that 
\[
V \cap \sigma_i = N_{S^k} \iso S^k \times [-1,1]^{s_i+s_r-d-k}.
\]
If $N_{S^k}$ unknots in $V$ (in the sense of Theorem~\ref{thm_Hudson_unknotting_moving_boundary}), then the existence of $N_{D^{k+1}}$ is immediate: we use an ``standard'' version of $N_{S^k}$ to construct $N_{D^{k+1}}$, and move it to our situation by the isotopy given by the unknotting theorem.

So we only need to check the hypothesis of Corollary~\ref{cor_hudson_unknotting_ribbons}. For us here, $q:=s_r$ and $m:= s_i+s_r-d$, so we need
\[
s_r \ge (s_i+s_r-d) + k + 3.
\]
which reduces to $d-s_i -3 \ge k$, which is true if $d -s_i -3 \ge  s - (r-1)d$, and this is implied by $rd \ge (r+1) m +3$, i.e., the metastable range hypothesis.
%
\end{proof}

\begin{lemma}[Existence of the surgery-handle]\label{sublem_surgery_tube}
In the situation given by Lemma~\ref{sublem_unknot_ribbon}, there exists in $B$ a handle 
\[
T := D^{k+1} \times [-1,1]^{s_i + s_r -d-k} \times [-1,1]^{d-s_r}
\]
such that 
\begin{itemize} 
\item 
$T$ contains $D^{k+1}$ as $D^{k+1} \times 0$,
\item 
$T$ intersects $\sigma_r$ as $D^{k+1} \times [-1,1]^{s_i + s_r -d-k} \times 0$, 
\item 
$T$ intersects $\sigma_i$ as $S^{k}\times [-1,1]^{s_i + s_r -d-k} \times [-1,1]^{d-s_r}$.
\end{itemize}
Figure~\ref{fig_surgery_tube} illustrates the handle $T$.
\end{lemma}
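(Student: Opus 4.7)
The plan is to build $T$ simply as the product of the regular neighborhood $N_{D^{k+1}}\subseteq \sigma_r$ (provided by Lemma~\ref{sublem_unknot_ribbon}) with the normal factor to $\sigma_r$ inside $B$, using the product structures already established in Lemmas~\ref{sublem_trivial_bundle_intersection} and~\ref{sublem_unknot_ribbon}. Concretely, by Lemma~\ref{sublem_trivial_bundle_intersection} we may write $B=\sigma_r\times [-1,1]^{d-s_r}$ with $\sigma_r=\sigma_r\times 0$, and for some small $\epsilon>0$ we have the transversality identification
\[
\sigma_i \cap \bigl(\sigma_r\times \epsilon [-1,1]^{d-s_r}\bigr) = (\sigma_i\cap \sigma_r)\times \epsilon [-1,1]^{d-s_r}.
\]
By Lemma~\ref{sublem_unknot_ribbon}, inside $\sigma_r$ we have the product neighborhood $N_{D^{k+1}}\iso D^{k+1}\times [-1,1]^{s_i+s_r-d-k}$ and the compatible restriction $N_{D^{k+1}}\cap \sigma_i = N_{S^k}\iso S^k\times [-1,1]^{s_i+s_r-d-k}$.

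First, I would define
\[
T \;:=\; N_{D^{k+1}} \,\times\, \epsilon[-1,1]^{d-s_r} \;\subseteq\; B,
\]
and then verify the three listed intersection properties by unwinding these identifications:
the core $D^{k+1}$ sits inside $T$ as $D^{k+1}\times 0\times 0$; since $\sigma_r = \sigma_r\times 0$, we have $T\cap \sigma_r = N_{D^{k+1}}\times 0 = D^{k+1}\times [-1,1]^{s_i+s_r-d-k}\times 0$; and since $\sigma_i$ near $\sigma_r$ is $(\sigma_i\cap \sigma_r)\times \epsilon[-1,1]^{d-s_r}$, we get
\[
T\cap \sigma_i \;=\; (N_{D^{k+1}}\cap \sigma_i)\times \epsilon[-1,1]^{d-s_r} \;=\; N_{S^k}\times \epsilon[-1,1]^{d-s_r},
\]
which is exactly $S^k\times [-1,1]^{s_i+s_r-d-k}\times [-1,1]^{d-s_r}$ after rescaling the last factor. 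Finally, since $D^{k+1}$ itself avoids the other $\sigma_j$ (Lemma~\ref{sublem_unknot_ribbon}) and these are finitely many closed sets, one can shrink $\epsilon$ and the first transverse factor if necessary so that the tubular $T$ remains disjoint from every $\sigma_j$ with $j\neq i,r$.

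I do not expect a genuine obstacle here: all the real work, namely arranging that $S^k$ has trivial normal bundle in $\sigma_i\cap \sigma_r$, that the spanning disk $D^{k+1}\subseteq \sigma_r$ exists and avoids the other $\sigma_j$, and that the trivialization of $N_{S^k}$ extends across $D^{k+1}$ to give the product neighborhood $N_{D^{k+1}}$, has already been done in Lemmas~\ref{sublem_trivial_bundle_intersection}--\ref{sublem_unknot_ribbon} via Hudson's unknotting theorem and the block bundle machinery. The present step is essentially a bookkeeping argument that assembles these product structures. The only mild subtlety to keep track of is that the two $[-1,1]$-factors appearing in $T$ play distinct roles: the middle factor $[-1,1]^{s_i+s_r-d-k}$ is normal to $S^k$ \emph{inside} $\sigma_i\cap \sigma_r$ (and to $D^{k+1}$ inside $\sigma_r$), while the last factor $[-1,1]^{d-s_r}$ is normal to $\sigma_r$ inside $B$; their product gives the full codimension of the core $D^{k+1}$ inside $B$, which is the correct $d-(k+1)+ \text{(matching)}$ count making $T$ a PL-ball of the stated dimension $d$.
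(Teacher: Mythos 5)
Your construction is correct and is essentially identical to the paper's (whose proof is a one-line appeal to the same two ingredients: the product neighborhood $N_{D^{k+1}}$ from Lemma~\ref{sublem_unknot_ribbon} and the transversality of $\sigma_i$ and $\sigma_r$ from Lemma~\ref{sublem_trivial_bundle_intersection}); your version merely spells out the bookkeeping, and correctly observes that $T$ lies entirely inside the $\epsilon$-neighborhood of $\sigma_r$ where the transversality identification is valid. One small slip in your closing remark: $T$ is an $(s_i+1)$-ball, not a $d$-ball, since $(k+1)+(s_i+s_r-d-k)+(d-s_r)=s_i+1$, matching the handle $D^{k+1}\times[-1,1]^{s_i-k}$ of Definition~\ref{def_amb_surg}.
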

\begin{proof}
This follows from the construction of $D_{k+1}$ (Lemma~\ref{sublem_unknot_ribbon}) and the transversality of the intersection of $\sigma_i$ and $\sigma_r$ (Lemma~\ref{sublem_trivial_bundle_intersection}).
\end{proof}

\begin{definition}[Ambient surgery] \label{def_amb_surg}
Let $S^k$ be an embedded sphere in (the interior of) $\sigma_i$ with a trivialized regular neighborhood $ S^k \times [-1,1]^{s_i - k}$, and let $T$ be a \define{handle based on $ S^k$}, i.e.,
\[
 T =  D^{k+1} \times [-1,1]^{s_i-k} \subseteq B^d
\]
for a ball $ D^{k+1}$ with 
\[
 T \cap \sigma_i = \boundary D^{k+1} \times [-1,1]^{s_i-k}
=  S^{k+1} \times [-1,1]^{s_i-k}.
\]

Using $T$, we perform a \define{ambient surgery} on $\sigma_i \subset B^d$ by constructing the new manifold 
\begin{equation} \label{eq_surg_on_sigma_i}
\sigma_i^* :=  ( \sigma_i \setminus (S^k \times [-1,1]^{s_i - k} )) \cup ( D^{k+1} \times \boundary [-1,1]^{s_i -k}) \subset B^d
\end{equation}
\end{definition}

In order to attach the handle $T$ we made several choices: the choice of $S^{k}$, its regular neighborhood $S^{k} \times [-1,1]^{s_i -k}$, the `core' $D^{k+1}$, etc. In the next Lemma, we show that, up to isotopy, there is only one way to attach a handle $T$ to $\sigma_i$:
\begin{lemma}
If $ S^k$ and $\t S^k$ are embedded spheres in $\sigma_i$ with a trivialized regular neighborhoods and handles $T$ and $\t T$ as in Definition~\ref{def_amb_surg}.

Then performing a surgery on $\sigma_i$ using $T$ or $\t T$ produces two homeomorphic manifolds $\sigma_i^*$ and $\t \sigma_i^*$ that are connected by a proper ambient isotopy of $B^d$. 
\end{lemma}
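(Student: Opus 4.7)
The plan is to chase through the three pieces of data $(S^k, \mbox{trivialization}, D^{k+1})$ defining a handle and argue that, within the metastable range, each choice is unique up to a proper ambient isotopy of $B^d$. Since the surgered manifold $\sigma_i^*$ in \eqref{eq_surg_on_sigma_i} is built functorially from this data, such ambient isotopies will carry $\t \sigma_i^*$ onto $\sigma_i^*$.

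First I would align the core spheres. The two spheres $S^k$ and $\t S^k$ in $\sigma_i$ are homotopic (both are realizations, via Lemma~\ref{sublem_trivial_bundle_S_k}, of the same homotopy class being killed), and their codimension in $\sigma_i$ is $s_i - k \geq 3$ throughout the $r$-metastable range (essentially the observation preceding Lemma~\ref{sublem_trivial_bundle_S_k}). Applied to their trivialized regular neighborhoods $S^k \times [-1,1]^{s_i - k}$ and $\t S^k \times [-1,1]^{s_i - k}$, Corollary~\ref{cor_hudson_unknotting_ribbons} produces a proper ambient isotopy of $\sigma_i$ carrying the second onto the first (trivializations included). Using a collar on $\partial \sigma_i$ together with the transverse product structure of Lemma~\ref{sublem_trivial_bundle_intersection}, this isotopy extends to a proper ambient isotopy of $B^d$.

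Next I would align the cores of the handles. After the first step, $D^{k+1}$ and $\t D^{k+1}$ are properly embedded $(k+1)$-balls in $B^d$ sharing boundary $S^k \subset \sigma_i$, and they are homotopic rel boundary because $B^d$ is contractible. Applying Hudson's unknotting moving the boundary (Theorem~\ref{thm_Hudson_unknotting_moving_boundary}) in codimension $d - k - 1 \geq 3$ (which holds throughout the metastable range since $k \leq rm - (r-1)d$ forces $d - k - 1 \geq rd - rm - 1 \geq m + 2$) gives a proper ambient isotopy of $B^d$ throwing $\t D^{k+1}$ onto $D^{k+1}$ rel their common boundary sphere.

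The main obstacle is the last step: promoting the isotopy of bare cores to an isotopy of the full handles $D^{k+1} \times [-1,1]^{s_i - k}$. This amounts to showing that two trivializations of the normal block bundle of $D^{k+1}$ in $B^d$ that already agree on $\partial D^{k+1}$ are themselves isotopic rel boundary. Triviality of the bundle is automatic because $D^{k+1}$ is contractible (cf.\ Theorem~\ref{prop_s_k_in_M_pi_manifold}); the rel-boundary uniqueness follows, as noted in the remark preceding Theorem~\ref{thm_Hudson_unknotting_moving_boundary}, either via the vanishing of the obstruction in a PL Stiefel-type space in the metastable range, or more directly by applying Hudson's unknotting theorem to the pair $(D^{k+1} \times [-1,1]^{s_i - k},\, \partial D^{k+1} \times [-1,1]^{s_i - k}) \hookrightarrow (B^d, \partial B^d)$ and running the connectivity bookkeeping. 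Composing the three proper ambient isotopies of $B^d$ produces the desired proper ambient isotopy carrying $\t \sigma_i^*$ onto $\sigma_i^*$.
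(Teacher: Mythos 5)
Your three-stage plan --- align the core spheres with their trivialized neighborhoods, then the cores $D^{k+1}$, then the normal data of the handles --- is exactly the decomposition the paper uses (it does step one via Irwin's embedding theorem plus uniqueness of regular neighborhoods/normal block bundles, and step three via Irwin applied to $D^{k+1}\times\boundary[-1,1]^{s_i-k}$ inside a regular neighborhood $N$ of the common core). However, your middle step has a genuine gap. First, Theorem~\ref{thm_Hudson_unknotting_moving_boundary} applies to \emph{proper} embeddings $M\to Q$ and produces an isotopy that is allowed to \emph{move} the boundary; here $D^{k+1}$ is not properly embedded in $B^d$ (its boundary $S^k$ lies in the interior of $B^d$, on $\sigma_i$), and you need the isotopy rel $S^k$, so the theorem does not apply as cited. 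Second, and more seriously, your ``functoriality'' argument requires each successive ambient isotopy to preserve the data already aligned: an ambient isotopy of $B^d$ throwing $\t D^{k+1}$ onto $D^{k+1}$ that is merely ``proper'' (fixed on $\boundary B^d$) may drag $\sigma_i$ and the trivialized neighborhood $S^k\times[-1,1]^{s_i-k}$ along with it, after which $H_1(\t\sigma_i^*)$ is a surgery on $H_1(\sigma_i)$, not on $\sigma_i$, and the comparison with $\sigma_i^*$ breaks down. Nothing in your argument controls the support of this isotopy.

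The paper's device for exactly this point is to cone: take a cone $C$ over $D^{k+1}\cup\t D^{k+1}$, check by general position (using the metastable range) that $C$ meets $\sigma_i$ only in $S^k$, and take a regular neighborhood $V$ of $C$ \emph{relative} to $S^k$, so that $V$ is a $d$-ball with $V\cap\sigma_i=S^k$. The unknotting of $\t D^{k+1}$ onto $D^{k+1}$ is then performed inside $V$, fixed on $\boundary V$, so it automatically leaves $\sigma_i$ and the previously aligned data untouched. You need this (or an equivalent localization) to make your second step work; the same remark applies to your third step, which must likewise be supported in a regular neighborhood of $D^{k+1}$ meeting $\sigma_i$ only in $S^k\times[-1,1]^{s_i-k}$. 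A smaller quibble of the same flavor affects your first step: Corollary~\ref{cor_hudson_unknotting_ribbons} is stated for proper embeddings into a ball and presupposes that the two thickened spheres are homotopic as maps of pairs, whereas your neighborhoods sit in the interior of $\sigma_i$ and you have only arranged a homotopy of the cores; the paper avoids this by isotoping the bare spheres (Irwin) and then invoking uniqueness of regular neighborhoods and of the normal block bundle structure to match the trivializations.
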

\begin{proof}
By Irwin's Theorem \cite[Ch.~VIII, Theorem~24]{Zeeman:Seminar-on-combinatorial-topology-1966}, there exists a proper isotopy of $\sigma_i$ ``throwing'' $\t S^k$ onto $S^k$, so we can assume $S^k = \t S^k$ (since this isotopy can be extended to $B^d$ \cite{Hudson:Extending-piecewise-linear-isotopies-1966}).

Then, by the uniqueness of regular neighborhoods, we can assume that the trivialisation of the normal block bundle are identical \cite[Theorem~4.4]{BBI}. 

We have reached the situation where $T \cap \sigma_i = \t T \cap \sigma_i$.

Let us take a cone $C$ in $B^d$ over $D^{k+1} \cup \t D^{k+1}$. By general position\footnote{We have to check, e.g., $(k+2) + s_r -d <0$, i.e., $k < d -s_r-2$, which is true if $s -(r-1)d < d-s_r-2$, and this is implied by the metastable hypothesis $(r+1)m + 3 \leq r d$.}, this cone avoids $\sigma_i$ (except on $S^k = \t S^k$). Let us take a regular neighborhood $V$ of the collapsible space $C$ in $B^d$ \textit{relative} to $S^k = \t S^k$. Hence, $V$ is a $d$-ball, and 
\[
V \cap \sigma_i = S^k, \quad \mbox{\cite[p.~719, (iii)]{HudsonZeemanRN1}}.
\]
Inside of $V$, we can find an ambient isotopy (fixed on the boundary) `throwing' $\t D^{k+1}$ to $D^{k+1}$, hence, we can assume that $D^{k+1} = \t D^{k+1}$.

We have reached the situation where both $T$ and $\t T$ are equal on  $\sigma_i$ and have the same `core' $D^{k+1} = \t D^{k+1}$.

To conclude, let us take a regular neighborhood $N$ of $D^{k+1}$. We can assume that 
\begin{itemize}
\item $N \cap \sigma_i = S^k \times [-1,1]^{s_r-k} = T \cap \sigma_i = \t T \cap \sigma_i$, and
\item $T, \t T \subseteq N$ (after, possibly, shrinking the handles).
\end{itemize}
We have that 
\[
\sigma_i^* \cap N
=
D^{k+1} \times [-1,1]^{s_i-k}
\iso 
\t \sigma_i^* \cap N
\]
and
\[
\boundary  ( \sigma_i^* \cap N )
= \boundary (\t \sigma_i^* \cap N)
= S^{k} \times \boundary [-1,1]^{s_i-k}.
\]

Hence, to conclude, we only have to check that $D^{k+1} \times \boundary [-1,1]^{s_i-k} $ unknots inside of $N$ (keeping the boundary fixed). First, we observe that two proper maps $D^{k+1} \times S^{s_i-k-1} \rightarrow B^d$ that are equal on the boundary are always homotopic (by a straight-line homotopy). Hence, by Irwin's Theorem \cite[Ch. VIII, Theorem 24]{Zeeman:Seminar-on-combinatorial-topology-1966}, we only need to check
\[
2 s_i -d + 1 \leq s_i -k +1, \quad \mbox{i.e.,} \quad k+3 \leq d-s_i
\]
which is true if  $r m - (r-1)d + 3 \leq d -s_i$, and this is implied by $(r+1) m + 3 \leq rd$ (the metastable range hypothesis).
\end{proof}

\begin{figure}
\begin{center}
\includegraphics[scale=1]{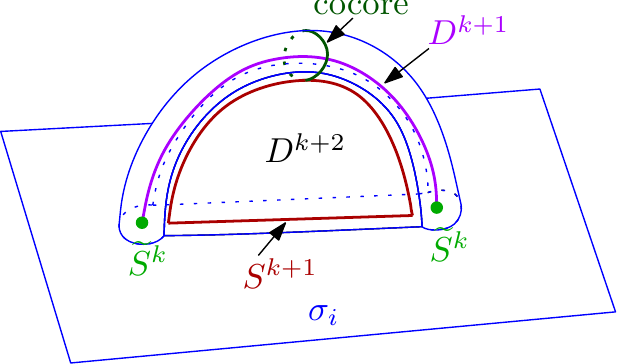}
\caption{We perform two complementary surgeries on $\sigma_i$ such that the resulting manifold $\sigma_i'$ is a ball homeomorphic to $\sigma_i$.}
\label{fig_double_surgery}
\end{center}
\end{figure}

\begin{lemma}[Existence of a complementary handle]
Let $S^k$ and $T$ be as in Definition~\ref{def_amb_surg}.

Then  there exists an handle (see Figure~\ref{fig_double_surgery})
\[
T^c = D^{k+2} \times [-1,1]^{s_i - k - 1} \subseteq B^d
\]
with 
\begin{itemize}
\item $T^c \cap \sigma_i^* = S^{k+1} \times [-1,1]^{s_i -k-1}$ for a $(k+1)$-sphere $S^{k+1}$ such that 
\item 
$S^{k+1}$ intersects the cocore of $T$ 
\[
0^{k+1} \times \boundary [-1,1]^{s_r - k} \subseteq \boundary T
\]
at exactly one point.
\item Furthermore, we can assume that $S^{k+1}$ is at positive distance of $\sigma_r$. 
\end{itemize} 
\end{lemma}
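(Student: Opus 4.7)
The plan is to mimic the construction of the original handle $T$ from Lemmas~\ref{sublem_trivial_bundle_S_k}--\ref{sublem_surgery_tube}, building $S^{k+1} \subset \sigma_i^*$ as a union of two $(k+1)$-disks---one inside the new attached region $D^{k+1} \times \partial[-1,1]^{s_i-k}$ of $\sigma_i^*$, one in the old part $\sigma_i \setminus \mathrm{int}(T \cap \sigma_i)$---and then extending $S^{k+1}$ to a $(k+2)$-disk in $B^d$ and thickening to get $T^c$, all while keeping $S^{k+1}$ away from $\sigma_r$.

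Concretely, using the decomposition $[-1,1]^{s_i-k} = [-1,1]^{s_i+s_r-d-k} \times [-1,1]^{d-s_r}$ from Lemma~\ref{sublem_surgery_tube}, I would fix a base point $w_+ \in \partial[-1,1]^{s_i-k}$ of the form $w_+ = (0,v)$ with $v \in \partial[-1,1]^{d-s_r}$, so that the second (normal-to-$\sigma_r$) coordinate of $w_+$ is extremal. Set $D_+ := D^{k+1} \times \{w_+\}$. This is a $(k+1)$-disk lying in the new attached region of $\sigma_i^*$, sitting at positive distance from $\sigma_r$, with boundary $\partial D_+ = S^k \times \{w_+\}$, and meeting the belt sphere $\{0\} \times \partial[-1,1]^{s_i-k}$ transversely at the single point $(0, w_+)$.

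For the second disk, I would produce a properly embedded $(k+1)$-disk $\Delta \subset \sigma_i \setminus \mathrm{int}(T \cap \sigma_i)$ with $\partial \Delta = S^k \times \{w_+\}$ and $\Delta \cap \sigma_r = \emptyset$. Since $\sigma_i$ is a ball and $T \cap \sigma_i = S^k \times [-1,1]^{s_i-k}$ is a tubular neighborhood of the (unknotted) sphere $S^k$, the Thom isomorphism together with the long exact sequence of the pair $(\sigma_i, \sigma_i \setminus T)$ give $\pi_j(\sigma_i \setminus T) = 0$ for $j \le s_i - k - 2$. An arithmetic check (using $k \le s - (r-1)d$, $s_j \le m$, and the inequalities in \eqref{eq_metastable}, in particular $d - m \ge 3$) shows $s_i - k - 2 \ge k$ throughout the $r$-metastable range, so $S^k \times \{w_+\}$ is null-homotopic in $\sigma_i \setminus T$. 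Irwin's embedding theorem (whose codimension and connectivity hypotheses are again verified by \eqref{eq_metastable}) then produces the embedded $\Delta$, and general position---using $k + 1 + s_r - d < 0$ in \eqref{eq_metastable}---lets one further require $\Delta \cap \sigma_r = \emptyset$. Defining $S^{k+1} := D_+ \cup \Delta$ yields the desired $(k+1)$-sphere in $\sigma_i^*$, at positive distance from $\sigma_r$ and crossing the cocore $\{0\} \times \partial[-1,1]^{s_i-k}$ of $T$ exactly at the point $(0, w_+)$.

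To extend to $T^c$, I would next bound $S^{k+1}$ by an embedded $(k+2)$-disk $D^{k+2} \subset B^d$, which is possible by general position since $B^d$ is a ball and $d \ge k + 4$ in the $r$-metastable range. The normal block bundle of $S^{k+1}$ in $\sigma_i^*$ is trivial by repeating the argument of Lemma~\ref{sublem_trivial_bundle_S_k} applied to $\sigma_i^*$ (whose tangent block bundle is again stably trivial, since the attached piece is a trivially framed handle), and this trivialization extends across $D^{k+2}$ inside $B^d$ via Hudson's unknotting theorem~\ref{thm_Hudson_unknotting_moving_boundary}, exactly as in Lemma~\ref{sublem_unknot_ribbon}. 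Thickening produces $T^c = D^{k+2} \times [-1,1]^{s_i-k-1}$ with $T^c \cap \sigma_i^* = S^{k+1} \times [-1,1]^{s_i-k-1}$. The main obstacle is the connectivity estimate for $\sigma_i \setminus T$ used in the Irwin step; this is the one place where the full strength of \eqref{eq_metastable} (and in particular $d - m \ge 3$) is essential. All other steps are routine adaptations of the construction already used to build $T$.
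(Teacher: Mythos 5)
Your construction is a genuinely different route from the paper's. The paper derives the lemma almost for free from the \emph{preceding} uniqueness lemma: since surgery on $\sigma_i$ by a handle based on $S^k$ produces a result that is unique up to proper ambient isotopy of $B^d$, one may pass to a standard model (the explicit cancelling-handles picture of Figure~\ref{fig_double_surgery}), where $T^c$ visibly exists, and then transport $T^c$ back by the ambient isotopy; only the last bullet (pushing $S^{k+1}$ off $\sigma_r$) requires a general-position count, which matches yours. You instead build $S^{k+1}$ by hand as $D_+\cup\Delta$ with $D_+=D^{k+1}\times\{w_+\}$ a parallel copy of the core in the new part of $\sigma_i^*$ and $\Delta$ a disk in the old part bounding $S^k\times\{w_+\}$; this is the classical ``belt sphere meets attaching sphere once'' picture made explicit, and your connectivity estimate $\pi_j(\sigma_i\setminus T)=0$ for $j\le s_i-k-2$ together with $2k+1\le s_i+s_r-d\le s_i-3$ does justify the Irwin step. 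The advantage of your route is that it exhibits the geometric intersection of $S^{k+1}$ with the cocore directly rather than inferring it from a model; the advantage of the paper's route is that all framing questions are settled in the standard model. That framing question is the one soft spot in your argument: to get $T^c\cap\sigma_i^*=S^{k+1}\times[-1,1]^{s_i-k-1}$ you need the normal block bundle of $S^{k+1}$ in $\sigma_i^*$ to be trivial, and your justification---that $t(\sigma_i^*)$ is stably trivial ``since the attached piece is a trivially framed handle''---is an assertion, not a proof: the result of a surgery on a stably parallelizable manifold is stably parallelizable only when the framing of the surgered sphere is compatible with the stable trivialization (this is precisely the content of the framing choice in Milnor's Lemma~2), so you should either verify that compatibility for the framing coming from Lemma~\ref{sublem_trivial_bundle_S_k}, or argue directly that $\nu(S^{k+1}\subset\sigma_i^*)$ is stably trivial (e.g.\ via $\nu(S^{k+1}\subset B^d)$ and $\nu(\sigma_i^*\subset B^d)|S^{k+1}$) and then apply the stability theorem, using $s_i-k-1\ge k+2$. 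You should also record the routine general-position count showing that the interior of your $D^{k+2}$ misses $\sigma_i^*$ (namely $(k+2)+s_i-d<0$ in the $r$-metastable range), since otherwise $T^c\cap\sigma_i^*$ would be larger than the attaching region. With those two points filled in, your proof is complete.
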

\begin{proof}
By the previous lemma, there exists, up to proper isotopy of $B^d$, an unique way to to perform a surgery by the handle $T$ on $\sigma_i$. From this fact, the existence of the complementary handle $T^c$ is immediate.

For the last property, we need to shift $S^{k+1}$ to general position\footnote{We want $(k+1) + (s_i+ s_r -d) - s_i <0$, i.e., $k < d-s_r-1$. But $k \leq s -(r-1)d$, so we only need $s -(r-1)d < d-s_r-1$, i.e., $s + s_r + 1 < r d$, and this is true in the metastable range $(r+1)m + 3 \leq r d$.}.
\end{proof}

\begin{remark}
We call $T^c$ the `complementary handle' to $T$.
\end{remark}

\begin{lemma}\label{sub_complementary_tube}
Let $\alpha \in \pi(\sigma_i\cap \sigma_r)$. Then there exists a sphere $S^k \subset \sigma_i\cap \sigma_r$ and a handle $T$ as in Defintion~\ref{def_amb_surg} such that 
performing a surgery on $\sigma_i$ by the handle $T$, followed by a surgery by the handle $T^c$ produces a manifold $\sigma_i^{**}$ which is a $s_i$-ball. Furthermore for $j<k$
\[
\pi_j(\sigma_i^{**}\cap \sigma_r) \iso \pi_j(\sigma_i \cap \sigma_r)
\] 
and
\[
\pi_k(\sigma_i^{**} \cap \sigma_r) \iso \pi_k(\sigma_i \cap \sigma_r)/\mbox{a subgroup containing $\alpha$}.
\]
\end{lemma}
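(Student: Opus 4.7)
The plan is to combine the ingredients prepared in Lemmas~\ref{sublem_trivial_bundle_intersection}--\ref{sublem_surgery_tube} with the complementary-handle lemma to realize $\alpha$-killing surgery on $\sigma_i\cap\sigma_r$ while keeping the ambient ball $\sigma_i^{**}\subset B$ PL-homeomorphic to $\sigma_i$.

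First, I would apply Lemma~\ref{sublem_trivial_bundle_S_k} to produce an embedded $S^k\subset \sigma_i\cap\sigma_r$ realizing $\alpha$, with trivialized normal block bundle inside $\sigma_i\cap\sigma_r$. Combined with the trivial normal block bundle of $\sigma_i\cap\sigma_r$ in $\sigma_i$ furnished by Lemma~\ref{sublem_trivial_bundle_intersection}, this yields a splitting of the normal block bundle of $S^k$ in $\sigma_i$ as $[-1,1]^{s_i+s_r-d-k}\times[-1,1]^{d-s_r}$. Then Lemma~\ref{sublem_unknot_ribbon} supplies a disk $D^{k+1}\subset\sigma_r$ with $\boundary D^{k+1}=S^k$, disjoint from the other $\sigma_j$ and extending the trivialization, and Lemma~\ref{sublem_surgery_tube} builds a handle $T$ attached to $\sigma_i$ along these data.

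Next, I would perform the ambient surgery prescribed in Definition~\ref{def_amb_surg} using $T$, obtaining $\sigma_i^*$, and trace its effect on $\sigma_i\cap\sigma_r$. Because $T\cap\sigma_r=D^{k+1}\times[-1,1]^{s_i+s_r-d-k}\times 0$, formula~\eqref{eq_surg_on_sigma_i} restricted to $\sigma_r$ removes $S^k\times[-1,1]^{s_i+s_r-d-k}$ from $\sigma_i\cap\sigma_r$ and glues in $D^{k+1}\times \boundary [-1,1]^{s_i+s_r-d-k}$; this is exactly a PL surgery on $\sigma_i\cap\sigma_r$ along our sphere $S^k$, performed inside $\sigma_r$. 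The inequality $2k+1\leq\dim(\sigma_i\cap\sigma_r)$ from the observation preceding Lemma~\ref{sublem_trivial_bundle_S_k} is precisely the hypothesis of the standard surgery-preserves-low-homotopy computation (following \cite{Milnor}), which gives $\pi_j(\sigma_i^*\cap\sigma_r)\iso \pi_j(\sigma_i\cap\sigma_r)$ for $j<k$ and the quotient of $\pi_k(\sigma_i\cap\sigma_r)$ by a subgroup containing $\alpha$.

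Finally, I would invoke the previous lemma to attach to $\sigma_i^*$ the complementary handle $T^c$, whose attaching sphere $S^{k+1}$ meets the belt sphere $0^{k+1}\times\boundary[-1,1]^{s_i-k}$ of $T$ transversely in a single point and (by the last bullet of that lemma) lies at positive distance from $\sigma_r$. Taking $T^c$ inside a sufficiently thin regular neighborhood of $S^{k+1}$, we can ensure that $T^c$ itself is disjoint from $\sigma_r$. The resulting $\sigma_i^{**}$ is again an $s_i$-ball: the transverse single-point intersection of $S^{k+1}$ with the belt sphere of $T$ is the standard PL handle-cancellation hypothesis, so the pair of ambient surgeries undoes itself. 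Because $T^c$ is disjoint from $\sigma_r$, the second surgery does not alter the intersection, so $\sigma_i^{**}\cap\sigma_r=\sigma_i^*\cap\sigma_r$, giving the claimed identifications of $\pi_j$ for $j<k$ and of $\pi_k$. The main obstacle is the handle-cancellation step: one must verify both that $T$ and $T^c$ genuinely form a canceling pair in the PL sense (so that $\sigma_i^{**}$ is a ball), and that $T^c$ can be chosen small enough to remain disjoint from $\sigma_r$ without disturbing the transverse intersection with the belt sphere of $T$; this is where the metastable range \eqref{eq_metastable} is essential, via the general-position argument already used to push $S^{k+1}$ off $\sigma_r$.
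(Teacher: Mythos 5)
Your proposal is correct and follows essentially the same route as the paper: assemble $S^k$, $T$, and $T^c$ from the preceding lemmas, observe that the first surgery restricted to $\sigma_r$ is exactly Milnor-style surgery on $\sigma_i\cap\sigma_r$ along $S^k$ (giving the stated effect on $\pi_j$ for $j\le k$), and use the complementary-handle cancellation to see that $\sigma_i^{**}$ is an $s_i$-ball. You in fact make explicit a point the paper leaves implicit, namely that $T^c$ can be kept disjoint from $\sigma_r$ so the second surgery does not disturb the intersection.
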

\begin{proof}
The existence of $S^k$ is given by Lemma~\ref{sublem_trivial_bundle_S_k}. The existence of $T$ is given by Lemma~\ref{sublem_surgery_tube}. The existence of $T^c$ is given by Lemma~\ref{sub_complementary_tube}.

To conclude, one notices
\begin{itemize}
\item 
By the first surgery using the handle $T$, we have `killed' the homotopy class $\alpha = [S^k] \in \pi_k(\sigma_i \cap \sigma_r)$, i.e., by construction,
\[
\sigma_i^* \cap \sigma_r
=  (  ( \sigma_i \cap \sigma_r) \setminus  (S^k \times [-1,1]^{s_i + s_r-d - k} \times 0)) \cup (D^{k+1} \times \boundary [-1,1]^{s_i + s_r-d - k} \times 0)
\]
and so we have killed $[S^k]$ in the sense of \cite[Lemma~2]{Milnor}.
\item 
The effect of two surgeries by complementary handles cancels, hence $\sigma_i^{**}$ is a $s_i$-ball \cite[Lemma~6.4]{Rourke:Introduction-to-piecewise-linear-topology-1982}.  \qedhere
\end{itemize}
\end{proof}

\begin{proof}[Proof of Lemma~\ref{lem_surgey_step}]
One combines Lemma~\ref{sub_complementary_tube} with Zeeman's Unknotting of balls.
\end{proof}

\subsection{Proof of Lemma~\ref{lem_whitney_trick} for balls}

\begin{proposition} \label{prop_case_balls}
The first part of Lemma~\ref{lem_whitney_trick} is true if we add the following hypothesis: for each $i=1 , \dots, r-1$,
\[
\sigma_i \cap \sigma_r \text{ is a $(s_i + s_r-d)$-ball properly contained in $\sigma_r$.}
\]
\end{proposition}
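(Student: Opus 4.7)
The proof is by induction on $r\geq 2$, using the $(r-1)$-ball case of Lemma~\ref{lem_whitney_trick}(1) (i.e.\ the full lemma, not just the ball subcase handled here) as inductive hypothesis. The base case $r=2$ is Weber's version of the Whitney trick, proved in Section~4 of~\cite{Weber:Plongements-de-polyhedres-dans-le-domaine-metastable-1967}. For the inductive step with $r\geq 3$, the plan is to transfer the problem to the $(r-1)$-fold intersection pattern inside $\sigma_r$. Set $\tau_i:=\sigma_i\cap\sigma_r$ for $1\leq i\leq r-1$; by the additional hypothesis of the proposition each $\tau_i$ is a properly embedded ball in $\sigma_r$ of dimension $t_i:=s_i+s_r-d$. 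By Lemma~\ref{sublem_trivial_bundle_intersection} we may arrange a tubular neighborhood $N\cong\sigma_r\times [-1,1]^{d-s_r}$ of $\sigma_r$ in $B$ in which $\sigma_i\cap N\cong\tau_i\times [-1,1]^{d-s_r}$. Crucially $\sigma_1\cap\cdots\cap\sigma_r=\tau_1\cap\cdots\cap\tau_{r-1}$, so it suffices to empty the latter inside $\sigma_r$ by proper isotopies of $\sigma_r$ and then extend these to ambient isotopies of $B$ via the product structure on $N$.

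To invoke the inductive hypothesis on the $(r-1)$-tuple $(\tau_1,\ldots,\tau_{r-1})$ inside $\sigma_r$, we must verify (a) the $(r-1)$-metastable range $(r-1)s_r\geq r\max_i t_i + 3$, (b) codimension $\geq 3$, and (c) the vanishing of the reduced intersection class $\alpha'\in\pi_{s'-1}(S^{s_r(r-2)-1})$, where $s':=\sum_{i<r}t_i$. Items (a) and (b) are direct rearrangements: since $t_i\leq m+s_r-d$, condition (a) reduces to $rd\geq rm+s_r+3$, implied by $rd\geq (r+1)m+3$ together with $s_r\leq m$, while (b) gives $s_r-t_i=d-s_i\geq d-m\geq 3$. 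Item (c) is the main obstacle. Here the product decomposition of $N$ allows us to identify the Gauss map defining $\alpha$ with the $T$-fold suspension, $T:=s_r+(r-1)(d-s_r)$, of the Gauss map defining $\alpha'$, giving $\alpha=\Sigma^T\alpha'$. A Freudenthal count, using $d-m\geq 3$, shows the suspension homomorphism $\Sigma^T\colon\pi_{s'-1}(S^{s_r(r-2)-1})\to\pi_{s-1}(S^{d(r-1)-1})$ is injective in our metastable range, so $\alpha=0$ forces $\alpha'=0$.

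The inductive hypothesis then yields $(r-2)$ proper ambient isotopies $G^1_t,\ldots,G^{r-2}_t$ of $\sigma_r$ throwing $\tau_i$ to $\tau_i':=G^i_1\tau_i$ with $\tau_1'\cap\cdots\cap\tau_{r-2}'\cap\tau_{r-1}=\emptyset$ inside $\sigma_r$. Using the product structure on $N$ together with a PL collar cut-off along the transverse $[-1,1]^{d-s_r}$ factor, we extend each $G^i_t$ to a proper ambient isotopy $H^i_t$ of $B$ supported in $N$ and equal to $G^i_t\times\id$ in product coordinates; then $H^i_1\sigma_i\cap\sigma_r=\tau_i'$. Padding with $H^{r-1}_t:=\id$ to obtain the required $r-1$ isotopies, the intersection $\sigma_1'\cap\cdots\cap\sigma_{r-1}'\cap\sigma_r$ must lie in $\sigma_r$, where it equals $\tau_1'\cap\cdots\cap\tau_{r-2}'\cap\tau_{r-1}=\emptyset$, completing the induction.
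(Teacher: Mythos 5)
Your overall strategy coincides with the paper's: pass to the $(r-1)$-fold configuration $\tau_i=\sigma_i\cap\sigma_r$ inside $\sigma_r$, show the ambient intersection class is a suspension of the reduced one, use the stable range to deduce the reduced class vanishes, apply the $(r-1)$-ball case inductively inside $\sigma_r$, and extend the resulting isotopies to $B$. Your dimension counts are consistent with the paper's (your exponent $T=s_r+(r-1)(d-s_r)$ equals the paper's $L=d(r-1)-s_r(r-2)$, and your verification of the $(r-1)$-metastable range matches the footnote in the paper's proof).

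However, there is a genuine gap at the step you dispatch in one sentence: ``the product decomposition of $N$ allows us to identify the Gauss map defining $\alpha$ with the $T$-fold suspension of the Gauss map defining $\alpha'$.'' The class $\alpha$ is the homotopy class of $\t f$ restricted to \emph{all} of $\boundary(\sigma_1\times\cdots\times\sigma_r)$, so it depends on how each $\sigma_i$ sits in $B$ globally, not only on its germ near $\sigma_r$. The tubular neighbourhood $N\cong\sigma_r\times[-1,1]^{d-s_r}$ constrains $\sigma_i$ only inside $N$; away from $N$ the balls $\sigma_1,\dots,\sigma_{r-1}$ can be knotted or intersect one another arbitrarily, and those portions contribute to the boundary Gauss map. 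To make the suspension identification one must first normalize the \emph{entire} configuration: the paper does this with the Suspended Map Lemma~\ref{app_suspended_maps}, which uses unknotting of $\sigma_r$ in $B^d$ and of $\tau_i$ in $\sigma_r$ to homotope each $\sigma_i$ (rel the relevant boundary data, without changing $\alpha$) into the join form $\sigma_i=\tau_i * S^{d-s_r-1}$ over $B^d=\sigma_r*S^{d-s_r-1}$; only then does the Commuting Square Lemma~\ref{lem_commuting_square} (itself proved via the three-square/deleted-join argument of Section~\ref{sec_construction_square}) yield $\alpha=\Sigma^{L}\alpha'$. An alternative would be to localize $\alpha$ at the preimage of the diagonal by a Pontryagin--Thom/framed-cobordism argument, which would genuinely exploit only the product structure of $N$, but that is a different argument requiring its own justification (transversality, matching of framings, and the stable range), and you do not supply it. The remainder of your proof (range checks, extension of the isotopies via a cut-off in the normal directions, and the final emptiness of the intersection) is fine.
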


Before proving Proposition~\ref{prop_case_balls}, we need two Definitions and two Lemmas.

\begin{definition}\label{def_suspended_map}
Let $g \colon \sigma_1 \sqcup \, \cdots \, \sqcup \sigma_r \to B^d$ be balls properly mapped inside $B^d$, with the dimensional restriction of Lemma~\ref{lem_whitney_trick}. 

We say that $g$ is a \define{suspended map} if it has the following structure
\begin{itemize}
\item $g \sigma_r$ is an embedded and unknotted ball inside $B^d$, hence we can assume that
\[
B^d = ( g \sigma_r) * S^{d - s_r -1},
\]
for some $S^{d - s_r -1}$.
\item For $i=1,...,r-1$, the preimage by $g|_{\sigma_i}$ of $g \sigma_r\subset B^d$ is a ball properly embedded and  unknotted inside $\sigma_i$. I.e.,
\[
\sigma_i =  g|_{\sigma_i}^{-1} ( g \sigma_r )  * S^{d-s_r-1},
\]
for some $S^{d - s_r -1}$.

\textbf{Notation}: $\tau_i := g|_{\sigma_i}^{-1} ( g \sigma_r )\subset \sigma_i$.

\item For $i=1,...,r-1$, $g$ is defined as follows: 
\begin{itemize}
\item 
the sphere $S^{d-s_r-1} \subset \sigma_i$ is mapped homeomorphically to $S^{d-s_r-1} \subset B^d$,
\item the ball
$\tau_i \subset \sigma_i$ is 
properly map to $g \sigma_r$.
\item $g$ is defined elsewhere on $\sigma_i$ by interpolating in the obvious way between the two joins
\[
\sigma_i =  \tau_i  * S^{d-s_r-1}
\quad
\mbox{and}
\quad
B^d = ( g\sigma_r) * S^{d - s_r -1}.
\]
\end{itemize}
\end{itemize} 
Figure~\ref{fig_suspending_maps} shows on the right a suspended map.
\end{definition}

\begin{figure}
\begin{center}
\includegraphics[scale=1]{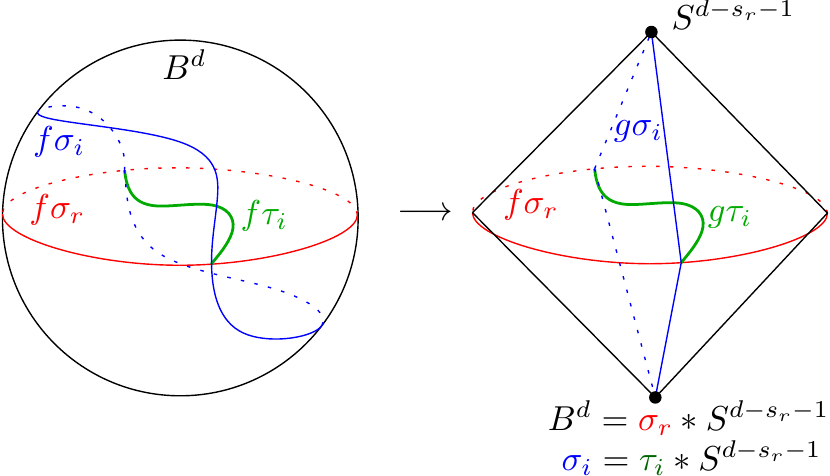}
\caption{Using that $\sigma_r$ unknots in $B^d$ and that $\sigma_i \cap \sigma_r$ unknots in $\sigma_r$, we change the setting to a suspension over $\sigma_r$.}
\label{fig_suspending_maps}
\end{center}
\end{figure}

\begin{lemma}[Suspended maps, Figure~\ref{fig_suspending_maps}]\label{app_suspended_maps}
 Let $f \colon \sigma_1 \sqcup \, \cdots \, \sqcup \sigma_r \to B^d$ be balls properly embedded inside $B^d$ in general position, with the dimensional restriction of Lemma~\ref{lem_whitney_trick}, and with the additional hypothesis of Proposition~\ref{prop_case_balls}, i.e., 
for each $i=1 , \dots, r-1$,
$
\sigma_i \cap \sigma_r 
$
is a $(s_i + s_r-d)$-ball properly contained in $\sigma_r$.

Then there exists a suspended map $g \colon \sigma_1 \sqcup \, \cdots \, \sqcup \sigma_r \to B^d $, such that
\begin{itemize}
\item 
the intersection classes of $f$ and $g$ are equal
\item 
$f|_{\sigma_r} = g|_{\sigma_r}$
\item For $i= 1,..., r-1,$ we have
$f|_{\sigma_i}^{-1}(\sigma_r) = g|_{\sigma_i}^{-1}(\sigma_r)=:\tau_i$. 
\item 
$f|_{\tau_i} = g|_{\tau_i}$.
\end{itemize}
\end{lemma}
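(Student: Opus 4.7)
The strategy is to build $g$ from $f$ by first installing the required join structures on $\sigma_r \subseteq B^d$ and on each $\tau_i \subseteq \sigma_i$, then defining $g$ on $\sigma_i\setminus \tau_i$ by a canonical join interpolation, and finally showing that $g$ has the same intersection class as $f$. Throughout, the codimension conditions in the metastable range \eqref{eq_metastable} give us $d-s_r\ge d-m\ge 3$, which is the hypothesis needed to invoke Zeeman's unknotting of balls.

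\textbf{Setup of join structures.} Lemma~\ref{sublem_trivial_bundle_intersection} already provides $B^d=\sigma_r\times[-1,1]^{d-s_r}$, which we regard (via radial rescaling) as the join $B^d=\sigma_r*S^{d-s_r-1}$ for an equatorial sphere $S^{d-s_r-1}\subset B^d$. For each $i<r$, the hypothesis of Proposition~\ref{prop_case_balls} says $\tau_i=\sigma_i\cap\sigma_r$ is a ball of codimension $d-s_r\ge 3$ properly embedded in the ball $\sigma_i$; by Zeeman's unknotting of balls~\cite[Ch.~VIII]{Zeeman:Seminar-on-combinatorial-topology-1966}, $\tau_i$ is unknotted in $\sigma_i$, so we may fix a join decomposition $\sigma_i=\tau_i*S_i^{d-s_r-1}$ (with $S_i^{d-s_r-1}\subset\sigma_i$ the corresponding link sphere, which by proper unknotting can be taken inside $\partial\sigma_i$ at the ``poles''). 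Choose, once and for all, any PL-homeomorphism $h_i\colon S_i^{d-s_r-1}\to S^{d-s_r-1}\subset B^d$ onto the equatorial sphere of $B^d$.

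\textbf{Definition of $g$.} Put $g|_{\sigma_r}:=f|_{\sigma_r}$. For each $i<r$ and each point $x=(1-t)\,p+t\,q\in \tau_i*S_i^{d-s_r-1}=\sigma_i$ with $p\in\tau_i$, $q\in S_i^{d-s_r-1}$, $t\in[0,1]$, set
\[
g(x):=(1-t)\,f(p)+t\,h_i(q)\ \in\ \sigma_r*S^{d-s_r-1}=B^d.
\]
Properties (b), (c), (d) of the lemma are then immediate from the construction (for (c), note that a join-interpolating point lies in $\sigma_r$ of $B^d$ only when $t=0$, i.e.\ only on $\tau_i$, since $f(p)\in\sigma_r$ and $h_i(q)\in S^{d-s_r-1}$ meet the $\sigma_r$-factor of $B^d=\sigma_r*S^{d-s_r-1}$ only at $t=0$). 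Verifying that $g$ is a suspended map in the sense of Definition~\ref{def_suspended_map} is also immediate, since we have essentially defined $g$ to fit that template.

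\textbf{Intersection classes agree.} This is the main content, and the one place where a careful argument is needed. Consider the linear homotopy
\[
F^i_u(x):=(1-t)\,f(p)+t\bigl((1-u)\,f(q)+u\,h_i(q)\bigr),\qquad u\in[0,1],
\]
using the join structure $\sigma_i=\tau_i*S_i^{d-s_r-1}$, which connects $F^i_0=f|_{\sigma_i}$ to $F^i_1=g|_{\sigma_i}$ while keeping $F^i_u|_{\tau_i}=f|_{\tau_i}$ fixed for all $u$. Set $F_u:=f$ on $\sigma_r$ and $F_u:=F^i_u$ on $\sigma_i$ for $i<r$. The induced homotopy of Gauss maps $\widetilde{F_u}\colon \partial(\sigma_1\times\cdots\times\sigma_r)\to (B^d)^r$ connects $\widetilde{f}$ to $\widetilde{g}$. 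To conclude $\alpha(f)=\alpha(g)$ it suffices to show that $\widetilde{F_u}$ avoids the thin diagonal $\delta_r(B^d)$ throughout $u\in[0,1]$.

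A diagonal hit at some $(x_1,\dots,x_r)\in\partial(\sigma_1\times\cdots\times\sigma_r)$ would require $F_u^1(x_1)=\cdots=F_u^r(x_r)$. Since $F_u$ is unchanged on $\sigma_r$, such a common value $y$ lies in $F_u^r(\sigma_r)=\sigma_r$, so for every $i<r$ the point $x_i$ must lie in $F_u^i$-preimage of $\sigma_r$; from the explicit formula for $F_u^i$ this happens iff $x_i\in\tau_i$, and there $F_u^i(x_i)=f(x_i)$ is independent of $u$. Hence the diagonal-hitting locus of $\widetilde{F_u}$ on $\partial(\sigma_1\times\cdots\times\sigma_r)$ is contained in the set
\[
\Delta:=\bigl\{(x_1,\dots,x_r)\in\partial(\sigma_1\times\cdots\times\sigma_r)\ :\ x_i\in\tau_i\ \forall i<r,\ f(x_1)=\cdots=f(x_r)\bigr\},
\]
which does not depend on $u$ and which equals exactly the diagonal-hitting locus of the original $\widetilde{f}$ on the boundary. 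But $\widetilde{f}$ is, by hypothesis, a well-defined map into $(B^d)^r\setminus\delta_r(B^d)$ on the boundary of $\sigma_1\times\cdots\times\sigma_r$, so $\Delta=\emptyset$. Thus $\widetilde{F_u}$ avoids $\delta_r(B^d)$ for all $u$, providing the desired homotopy from $\widetilde{f}$ to $\widetilde{g}$ in $(B^d)^r\setminus\delta_r(B^d)$.

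\textbf{Main obstacle.} The only subtle point is the preimage computation $F_u^{-1}(\sigma_r)\cap\sigma_i=\tau_i$ for all $u$: this must be read off from the join formula and uses that $h_i(q)$ lies on the equatorial sphere, hence outside $\sigma_r\subset B^d$. It is here that we use the specific way $h_i$ was chosen (any PL-homeomorphism $S_i^{d-s_r-1}\to S^{d-s_r-1}$ works) and the fact that the join coordinate $t$ parametrises the ``distance'' from $\sigma_r$ linearly. With this observation the rest of the argument is formal, as above.
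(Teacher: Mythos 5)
Your construction of $g$ and the verification of the last three bullet points are fine, and the overall shape of the argument (unknot, install join structures, interpolate) is the same as the paper's. The genuine gap is in the step ``Intersection classes agree''. First, $F^i_0\neq f|_{\sigma_i}$: the join coordinates $(1-t)p+tq$ on $\sigma_i=\tau_i*S_i^{d-s_r-1}$ come from an abstract unknotting homeomorphism and are not straight lines in $\R^d$, so $(1-t)f(p)+tf(q)$ is not $f\bigl((1-t)p+tq\bigr)$; you still owe a homotopy from $f|_{\sigma_i}$ to $F^i_0$ with the same control. Second, and more seriously, the claim that $(F^i_u)^{-1}(\sigma_r)=\tau_i$ for all $u$ is false in general: writing $B^d=\sigma_r\times[-1,1]^{d-s_r}$ and letting $\pi$ denote projection onto the normal factor, linearity gives $\pi\bigl(F^i_u(x)\bigr)=t\bigl((1-u)\,\pi(f(q))+u\,\pi(h_i(q))\bigr)$; although $\pi(f(q))\neq 0$ and $\pi(h_i(q))\neq0$, their convex combination can vanish when the two normal components point in opposite directions. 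So the segment from $f(q)$ to $h_i(q)$ can cross $\sigma_r$, points of $\sigma_i\setminus\tau_i$ can be pushed into $\sigma_r$ at intermediate times $u$, and your set $\Delta$ is \emph{not} $u$-independent. Since the expected dimension of the diagonal-hitting locus of a one-parameter family is $s-d(r-1)$, which can be nonnegative, this is not a removable coincidence. A related symptom: taking $h_i$ to be an \emph{arbitrary} homeomorphism discards the degree with which $f$ wraps $S_i^{d-s_r-1}$ around the normal directions of $\sigma_r$, which is exactly the datum any correct homotopy must respect.

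The paper circumvents this by working inside the finer join structure $B^d=\tau_i*S^{d-s_i-1}*S^{d-s_r-1}$ (using that $\tau_i$ also unknots in $\sigma_r$): the move taking $\sigma_i$ to its suspended position is a radial retraction that only kills the $S^{d-s_i-1}$-weight and rescales the other two, hence manifestly preserves the condition ``$S^{d-s_r-1}$-weight $\neq 0$'', i.e.\ never pushes a point of $\sigma_i\setminus\tau_i$ into $\sigma_r$; the final adjustment of $\sigma_i$ onto $\tau_i*S^{d-s_r-1}$ is then justified by the observation that $\boundary\sigma_i$ generates $\pi_{s_i-1}\bigl(\boundary\bigl((\sigma_i\cap\sigma_r)*S^{d-s_r-1}\bigr)\bigr)\iso\Z$. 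To repair your argument you would need to replace the straight-line homotopy by one that controls the normal coordinate in this way (for instance, take $h_i$ to be the radial projection of $f|_{S_i^{d-s_r-1}}$ and homotope along rays in the normal factor), which essentially reproduces the paper's proof.
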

\begin{proof}
To simplify notation, during the proof we assume that $f$ is an inclusion map, i.e., $\sigma_i \subset B^d$.

The existence of $g$ will follow from the facts that
\begin{itemize}
\item 
$\sigma_r$ unknots in $B^d$, 
\item 
$\sigma_i \cap \sigma_r$ unknots inside of $\sigma_r$,
\item 
the modifications applied during the unknotting on $\sigma_1,\ldots ,\sigma_{r-1}$ do not change the homotopy class that we are interested into. 
\end{itemize}
More precisely, since $\sigma_i \cap \sigma_r$ unknots inside of $\sigma_r$, we can represent $\sigma_r$ as 
\[
\sigma_r = (\sigma_i \cap \sigma_r) * S^{ d -s_i -1}, \quad \text{and so}\quad B^d =  (\sigma_i \cap \sigma_r) * S^{ d -s_i -1} * S^{d-s_r-1}
\] 
Hence, we define a retraction from 
\[
 B^d   \setminus   ( \emptyset * S^{ d -s_i -1}* \emptyset ) \quad \text{onto} \quad (\sigma_i \cap \sigma_r)* \emptyset * S^{d-s_r-1},
\]
and, using this retraction on $\sigma_i$, we can assume that
$\sigma_i \subseteq   (\sigma_i \cap \sigma_r)*  S^{d-s_r-1}$.

If $B^{d -s_i}$ is the ``standard ball'' in $\sigma_r$ with boundary $S^{ d -s_i -1}$, then $\sigma_i \cap \sigma_r$ intersects this ball precisely once, and this translates into the fact that $\boundary \sigma_i$ is a generator of the homotopy group $\pi_{s_i-1} (\boundary (\sigma_i \cap \sigma_r)*  S^{d-s_r-1})\iso \Z$.

Hence, we can assume that $\sigma_i = (\sigma_i \cap \sigma_r)*  S^{d-s_r-1}$, after an homotopy of $\sigma_i $  inside of $(\sigma_i \cap \sigma_r)*  S^{d-s_r-1}$ (keeping $\boundary \sigma_i$ on $\boundary B^d$).
\end{proof}

\begin{lemma}[Commuting Square for Suspended Maps]\label{lem_commuting_square}
Let $f \colon \sigma_1 \sqcup \, \cdots \, \sqcup \sigma_r \to B^d$ be a suspended map. 
Then there exists a diagram commuting up to homotopy
\begin{equation} \label{eq_diam_simple}
\hfill
\begin{gathered}
\xymatrix{
\boundary(\sigma_1 \times \cdots \times \sigma_r) \ar[d]
\ar[r]^-{\homeq} 
&
\Sigma^{d(r-1) - s_r(r-2)} \boundary(\tau_1  \times \cdots \times \tau_{r-1} ) \ar[d]   \\
B \times \cdots \times B  \setminus \delta_r(B)   \ar[r]^-{\homeq} &
\Sigma^{d(r-1) - s_r(r-2)} ( (\sigma_r \times \cdots \times  \sigma_r ) \setminus \delta_{r-1}(\sigma_r) ) 
}
\end{gathered}
\hfill
\end{equation}
where
\begin{itemize}
\item 
the map on the left is the obvious one, representing an element $\alpha \in \pi_{s-1}(S^{d(r-1)-1})$, 
\item 
the map on the right is the suspension $\Sigma$ applied $( d(r-1) - s_r(r-2) )$ times to the map
\[
\hfill
 \boundary(\tau_1 \times \cdots \times \tau_{r-1})  \rightarrow  (\sigma_r \times \cdots \times  \sigma_r)  \setminus \delta_{r-1}(\sigma_r) ,
\hfill
\]
and this map represents an element
\[
\beta \in \pi_{ (s_1 + s_r -d) + \cdots + (s_{r-1} + s_r  -d) -1 }(S^{s_r(r-2) -1}).
\]
\item 
The two horizontal maps are defined within the proof. 
\end{itemize}
\end{lemma}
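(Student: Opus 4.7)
The plan is to construct both horizontal homotopy equivalences in~\eqref{eq_diam_simple} explicitly from the join structure dictated by the suspended-map decomposition, and then to verify commutativity by tracking how $\tilde f$ respects this structure. For the top horizontal map, I would start from the standard PL-ball identity $\partial(X \times Y) \simeq \partial X * \partial Y$ (a direct consequence of the pushout $\partial(X \times Y) = \partial X \times Y \cup_{\partial X \times \partial Y} X \times \partial Y$ together with contractibility of the factors). Iterating yields $\partial(\sigma_1 \times \cdots \times \sigma_r) \simeq \partial \sigma_1 * \cdots * \partial \sigma_r$, and substituting $\partial \sigma_i = \partial \tau_i * S^n$ for $i < r$ (with $n = d-s_r-1$, from Definition~\ref{def_suspended_map}) and re-associating via $S^a * S^b = S^{a+b+1}$ gives
\[ \partial(\sigma_1 \times \cdots \times \sigma_r) \simeq \partial(\tau_1 \times \cdots \times \tau_{r-1}) * (S^n)^{*(r-1)} * \partial \sigma_r \simeq \partial(\tau_1 \times \cdots \times \tau_{r-1}) * S^{d(r-1)-s_r(r-2)-1}, \]
which is exactly $\Sigma^{d(r-1)-s_r(r-2)} \partial(\tau_1 \times \cdots \times \tau_{r-1})$.

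For the bottom map I would exploit the unknotted product structure $B = \sigma_r \times C$ with $C = [-1,1]^{d-s_r}$ furnished by Lemma~\ref{sublem_trivial_bundle_intersection}. The Mayer--Vietoris pushout
\[ B^r \setminus \delta_r(B) = (\sigma_r^r \setminus \delta_r(\sigma_r)) \times C^r \cup \sigma_r^r \times (C^r \setminus \delta_r(C)), \]
with intersection $(\sigma_r^r \setminus \delta_r(\sigma_r)) \times (C^r \setminus \delta_r(C))$ and contractibility of the ball factors $\sigma_r$ and $C$, gives $B^r \setminus \delta_r(B) \simeq (\sigma_r^r \setminus \delta_r(\sigma_r)) * (C^r \setminus \delta_r(C))$. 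Combining with $C^r \setminus \delta_r(C) \simeq S^{(d-s_r)(r-1)-1}$ and with the ``difference-map'' identification $\sigma_r^r \setminus \delta_r(\sigma_r) \simeq \Sigma^{s_r}(\sigma_r^{r-1} \setminus \delta_{r-1}(\sigma_r))$ (obtained from $(x_1,\ldots,x_r) \mapsto (x_1-x_r,\ldots,x_{r-1}-x_r)$, which sends $\delta_r$ to $\{0\}$, together with the splitting $(\R^{s_r})^{r-1} \setminus \{0\} \simeq S^{s_r-1} * ((\R^{s_r})^{r-2} \setminus \{0\})$), and then re-associating joins, produces the asserted $B^r \setminus \delta_r(B) \simeq \Sigma^{d(r-1)-s_r(r-2)}(\sigma_r^{r-1} \setminus \delta_{r-1}(\sigma_r))$.

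Commutativity will then follow because a suspended map satisfies $f|_{\sigma_i} = f|_{\tau_i} * \id_{S^n}$: it equals $f|_{\tau_i}\colon \tau_i \to \sigma_r$ on the base $\tau_i$ and the identity on the $S^n$ fibres, and is also the identity on $\partial \sigma_r$. Hence under the top identification a point $(\xi, \zeta)$ of $\partial(\tau_1 \times \cdots \times \tau_{r-1}) * S^{d(r-1)-s_r(r-2)-1}$ is sent by $\tilde f$ to $(\beta(\xi), \zeta)$ in $(\sigma_r^{r-1} \setminus \delta_{r-1}(\sigma_r)) * S^{d(r-1)-s_r(r-2)-1}$, which is exactly $\Sigma^{d(r-1)-s_r(r-2)}\beta$ applied to $(\xi, \zeta)$. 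The main hurdle I anticipate is the coherent matching of the suspension factors on the two sides: the top join contains $(S^n)^{*(r-1)} * \partial \sigma_r$ while the bottom contains $S^{s_r-1} * S^{(d-s_r)(r-1)-1}$, and verifying that $\tilde f$ identifies these correctly requires careful bookkeeping that ultimately rests on the radial nature of $f$ in the join parameter of each $\sigma_i = \tau_i * S^n$.
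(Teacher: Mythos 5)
Your construction of the four corners and of the two horizontal equivalences is plausible, and the bottom one is genuinely different from the paper's: you obtain $B^r\setminus\delta_r(B)\simeq(\sigma_r^{r-1}\setminus\delta_{r-1}(\sigma_r))*S^{L-1}$ from the product structure $B=\sigma_r\times C$ via the join lemma and difference maps, whereas the paper builds the bottom map as an explicit \emph{inclusion} $\Sigma^L(\partial\sigma_r*\cdots*\partial\sigma_r\setminus\delta_{r-1}(\sigma_r))\hookrightarrow\partial B*\cdots*\partial B\setminus\delta_r(B)$ coming from the same join decomposition $B=\sigma_r*S^{d-s_r-1}$ that is used on top, and then needs a separate argument (collapsing both sides onto a common subsphere inside the simplicial deleted join $\delprodthin{(\partial B)}{r}$) to show that this inclusion is a homotopy equivalence. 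Your route buys the equivalence for free, but at the price of making commutativity the hard part --- and that is where the proposal has a genuine gap.

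The sentence ``under the top identification a point $(\xi,\zeta)$ is sent by $\widetilde f$ to $(\beta(\xi),\zeta)$'' is asserted rather than proved, and it is not literally true. First, $\widetilde f$ lands in the \emph{product} $B\times\cdots\times B\setminus\delta_r(B)$, not in a join decomposition of it; already relating the Gauss map on $\partial(\sigma_1\times\cdots\times\sigma_r)$ to the induced map on $\partial\sigma_1*\cdots*\partial\sigma_r$ requires a homotopy (the paper's ``first square'' does this with an explicit normalization formula and a straight-line homotopy which must be checked to miss the diagonal). Second, and more seriously, your target identification is built from the \emph{product} coordinates $B=\sigma_r\times C$ and from difference maps on the $\sigma_r$-coordinates, while the suspended map $f$ is defined via the \emph{join} coordinates $B=\sigma_r*S^{d-s_r-1}$; the $r$-th factor (the inclusion of $\sigma_r$ itself, whose boundary contributes to the sphere factor on top but is treated via the difference map at the bottom) and the interpolation part of $f|_{\sigma_i}$ on $\sigma_i\setminus(\tau_i\sqcup S^{d-s_r-1})$ mix the two kinds of coordinates. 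So it still has to be shown that $\widetilde f$ carries the sphere factor $(S^{d-s_r-1})^{*(r-1)}*\partial\sigma_r$ arising on top onto the sphere factor $S^{s_r-1}*(C^r\setminus\delta_r(C))$ arising at the bottom, up to homotopy, by a fixed degree-one map independent of the $\tau$-coordinates. This matching of the $L=d(r-1)-s_r(r-2)$ suspension coordinates is exactly the content of the lemma (the dimension count alone only shows that both rows are maps $S^{s-1}\to S^{d(r-1)-1}$); as you yourself note it ``requires careful bookkeeping,'' but that bookkeeping \emph{is} the proof and is missing.
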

We defer the proof of Lemma~\ref{lem_commuting_square} to Section~\ref{sec_construction_square}.

\begin{proof}[Proof of Proposition~\ref{prop_case_balls}]
We apply Lemma~\ref{app_suspended_maps}, to get a suspended map $ f\colon \sigma_1 \sqcup \, \cdots \, \sqcup \sigma_r \to B^d$ with the same intersection class as our initial map. 

From Diagram \eqref{eq_diam_simple} in Lemma~\ref{lem_commuting_square}
\[
\Sigma^{d(r-1) - s_r(r-2)} \beta = \alpha.
\]
But $\alpha= 0$, and we are in the stable range of the suspension homomorphism\footnote{The suspension $\pi_i(S^n) \rightarrow
\pi_{i+l}(S^{n+l})$ is an isomorphism if $i < 2n -1$ \cite[Corollary~4.24]{Hatcher:Algebraic-topology-2002}. For us this translates into
\begin{gather*}
s + (r-2)s_r -d(r-1) -1 < 2 (s_r (r-2)-1)-1, \ie  \\
\intertext{\ie} 
(s_1 - s_r) + \cdots + (s_{r-2} -s_r) + s_{r-1} + 2 < d(r-1),
\end{gather*}
which is trivially true if $m \leq d-3$.
}, hence $\beta = 0$. Therefore, using the third property in Lemma~\ref{app_suspended_maps}, we have reduced the problem to that of removing the $(r-1)$-intersection set between 
\[
\sigma_1 \cap \sigma_r, \ldots , \sigma_{r-1} \cap \sigma_r \subseteq \sigma_r,
\]
which are $(r-1)$ balls embedded in $\sigma_r$ in the metastable range\footnote{We must have for $i=1, \ldots,r-1$,
\begin{align*}
(r-1)s_r &\geq r(s_i + s_r -d ) + 3,\\
\intertext{\ie} r d &\geq r s_i + s_r + 3,
\end{align*}
and this is implied by
\[
rd \geq (r+1)m + 3 \geq r s_i + s_r + 3.
\]
} for $r-1$.

Thus, we are in position to work inductively: since $\sigma_r$ unknots in $B^d$, we have $B^d = \sigma_r * S^{d-s_r-1}$, so proper ambient isotopies of $\sigma_r$ can be extended to $B^d$.

The beginning of the induction (for $r=3$) reduces to the classical case of two balls intersecting inside a third ball, and is solved in Weber~\cite[ Prop.~1 \&~2]{Weber:Plongements-de-polyhedres-dans-le-domaine-metastable-1967}. 
\end{proof}

We are left with proving Lemma~\ref{lem_commuting_square}, this is what the rest of this section is devoted to. Before starting the proof (that will be split into three Lemmas in Section~\ref{sec_construction_square}), we introduce another kind of configuration space that will be useful for us during that proof.


\subsection{Deleted Joins} \label{app_deleted_joins}
Let $K$  be a simplicial complex. We define the \define{$k$-fold $k$-wise \emph{topological} deleted join} of $K$
\[
K^{*r} \setminus \delta_r^* K: = K * \cdots * K \setminus \left\{ \left. \frac 1r x + \cdots + \frac 1r x \; \right| \; x \in K\right\},
\]
and the \define{$k$-fold $k$-wise \emph{simplicial} deleted join} of $K$
\[
\delprodthin{(K)}r := \{ \tau_1 * \cdots * \tau_r \; | \; \tau_i \in K \text{ and } \tau_1 \cap \cdots \cap \tau_r =  \emptyset\}.
\]
Both spaces $K^{*r} \setminus \delta_r K$ and $\delprodthin{(K)}r $ have a natural $\sym_r$-action by permutation of the coordinates.

\begin{lemma}\label{lem_top_retracts_simpl}
$K^{*r} \setminus \delta_r^* K$ can be $\sym_r$-equivariantly retracted onto $\delprodthin{(K)}r $.
\end{lemma}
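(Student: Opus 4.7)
The plan is to describe points of $K^{*r}$ in barycentric ``weight coordinates'' and then project off the diagonal by a single algebraic formula. Concretely, a point $p = \sum_i t_i x_i \in K^{*r}$, with $x_i = \sum_v \lambda_{i,v} v$ the barycentric expansion on its carrier, corresponds bijectively to the function
\[
w : V(K) \times \{1, \ldots, r\} \to [0,1], \qquad w(v,i) := t_i \lambda_{i,v},
\]
satisfying $\sum_{v,i} w(v,i) = 1$ and such that for each $i$ the support $\tau_i(w) := \{v : w(v,i) > 0\}$ is a simplex of $K$. In these coordinates, $\sym_r$ acts by permuting the second argument; the thin diagonal $\delta_r^*(K)$ is exactly the locus where $w(v,i)$ is independent of $i$; and $p \in \delprodthin{(K)}{r}$ iff for every vertex $v$ some $w(v,i)$ vanishes, i.e.\ $\bigcap_i \tau_i(w) = \emptyset$.

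Set $a_v(w) := \min_i w(v,i)$ and $A(w) := \sum_v a_v(w)$. The estimate
\[
A(w) \;=\; \sum_v \min_i w(v,i) \;\leq\; \tfrac{1}{r}\sum_{v,i} w(v,i) \;=\; \tfrac{1}{r}
\]
holds with equality precisely on $\delta_r^*(K)$. Hence on the open set $K^{*r} \setminus \delta_r^*(K)$ the formula
\[
\rho(w)(v,i) \;:=\; \frac{w(v,i) - a_v(w)}{1 - r\, A(w)}
\]
is well-defined: the numerator is nonnegative, the denominator strictly positive, the total mass computes to $1$, and the support in the $i$-th factor is still contained in the simplex $\tau_i(w)$, so $\rho(w) \in K^{*r}$. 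By construction some entry in every $v$-column of $\rho(w)$ equals zero, so $\rho(w) \in \delprodthin{(K)}{r}$. Since $a_v$ and $A$ are symmetric in the second index, $\rho$ is $\sym_r$-equivariant, and points already in $\delprodthin{(K)}{r}$ (where $a_v \equiv 0$) are fixed. This already gives an equivariant retraction in the strict sense.

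To upgrade $\rho$ to a deformation retraction, use the straight-line homotopy $w_t := (1-t) w + t \rho(w)$ for $t \in [0,1]$. The mass and simplex-support conditions are preserved by convex combinations, so $w_t \in K^{*r}$ throughout. The one point that needs verification is that $w_t$ never meets $\delta_r^*(K)$. For any $w \notin \delta_r^*(K)$, pick a vertex $v$ and indices $k, i$ with $w(v,k) = a_v(w) < w(v,i)$; then $\rho(w)(v,k) = 0 < \rho(w)(v,i)$, and by linearity
\[
w_t(v,i) - w_t(v,k) \;=\; (1-t)\bigl(w(v,i)-w(v,k)\bigr) + t\bigl(\rho(w)(v,i) - \rho(w)(v,k)\bigr) \;>\; 0
\]
for every $t \in [0,1]$, which keeps $w_t$ off the diagonal. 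The only place delicacy is required is checking that the denominator in $\rho$ never vanishes on the domain; this is handled by the strict inequality $A(w) < 1/r$ above, which is the defining feature of being off $\delta_r^*(K)$.
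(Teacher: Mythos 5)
Your proof is correct and takes essentially the same route as the paper's: the core step there is likewise to subtract the vertex-wise minimum over the $r$ join factors and renormalize (packaged in the paper as a cell-wise homeomorphism of $K^{*r}$ with the join of $\delprodthin{(K)}{r}$-type and diagonal parts, after which the retraction of a join minus one factor is standard), and that is precisely your formula for $\rho$. Your write-up is a more explicit, global version of the same computation, with the added benefit that it verifies directly that the straight-line homotopy stays off $\delta_r^*(K)$, a point the paper leaves implicit.
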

\begin{proof} Our proof is modelled on the deleted \textit{product} case \cite[Lemma~10.1]{Hu}.

\noindent\textbf{Warm up.}
We first show the main trick on a very simple case. \Ie assuming $\Delta$ is the simplex on two vertices $\{ x,y\}$, we construct an homeomorphism
\[
\Delta * \Delta \iso \delprodthin{(\Delta)}2 * \delta_2^*(\Delta), \quad \text{(see Figure~\ref{fig_del_join}).}
\]
Once we have this homeomorphism the conclusion is immediate.

First, we name the four vertices of  $\Delta * \Delta$ as  $\{ x,y,x',y'\}$ (with $\{x,y\} \in \Delta * \emptyset$ and $\{ x',y'\} \in \emptyset * \Delta$). Then every point of $\Delta * \Delta$ is represented as 
\[
x= a x + by + a' x' + b' y' \quad \text{with $a,a',b,b' \in [0,1]$},
\]

\begin{figure}[h!]
\begin{center}
\includegraphics[scale=1]{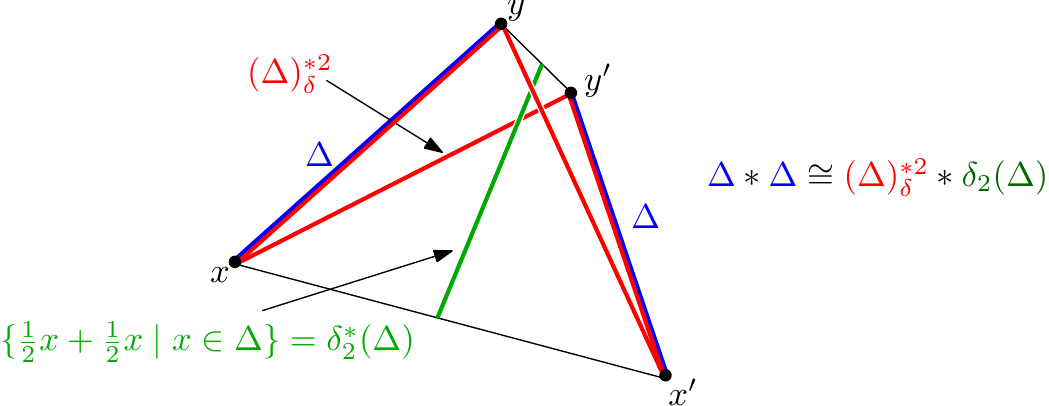}
\caption{The $k$-fold $k$-wise topological deleted join can be retracted to the $k$-fold $k$-wise simplicial deleted join.}
\label{fig_del_join}
\end{center}
\end{figure}

Assuming that $a  \geq a'$, $b \geq b'$ and that $a'$ or $b'$ is non-zero, we rewrite $x$ as
\begin{align*}
x &= (a-a' + b-b')\underbrace{\left(\frac{a-a'}{a-a' + b-b'} x + \frac{b-b'}{a-a' + b-b'}y \right)}_{\in\delprodthin{(\Delta)}2} + \\
& \quad \; (2 a' + 2 b' ) \underbrace{ \left(\frac{a'}{2a' + 2b'} (x+ x') + \frac{b'}{2a' + 2b'}(y+y')\right) }_{\in \delta_2^*(\Delta)}.
\end{align*}
The other possible orders on $a,b,a',b'$ can be worked on in a similar way, and will correspond to other faces of $\delprodthin{(\Delta)}2$.

\noindent\textbf{The general case.} Let $K$ be a simplicial complex. We can write any simplex of $K^{*r}$ as 
\[
\underbrace{(\Delta^1 * \omega^1)}_{\in K} * \cdots *\underbrace{( \Delta^r * \omega^r)}_{\in K} \quad \text{for some simplices $\Delta^i,\omega^i \in K$},
\]
with the condition
\[
\Delta^1 = \cdots = \Delta^r \quad \text{and} \quad \omega^1 \cap \cdots \cap \omega^r = \emptyset.
\] 
Our goal is to build an homeomorphism
\[
(\Delta^1 * \omega^1) * \cdots *( \Delta^r * \omega^r)
\iso
\delprodthin{(\Delta)}r * \delta_r^*(\Delta) * (\omega^1 * \cdots * \omega^r).
\]
where $\Delta$ is any of the $\Delta^i$. Once we have this homeomorphism the conclusion is immediate.

Let us name $p^i_j$ the vertices spanning $\Delta^i$, and $q^i_j$ the vertices spanning $\omega_i$. Then, any $x \in (\Delta^1 * \omega^1) * \cdots *( \Delta^r * \omega^r)$ can be written as
\[
x = \sum_{i,j} p^i_j(x) p^i_j + \sum_{i,j} q^i_j(x) q^i_j, \quad \text{with $p^i_j(x), q^i_j(x) \geq 0$ and } \sum_{i,j} p^i_j(x) + \sum_{i_j} q^i_j(x) = 1.
\]
We assume that at least one of the $q^i_j(x)$ is non-zero (otherwise nothing has to be done). Then, we write $x$ as 
\begin{align*}
x &= \sum p^i_j(x) \left( \frac 1{\sum p^i_j(x)} \sum p^i_j(x) p^i_j \right) + \sum q^i_j(x) \left( \frac 1{\sum q^i_j(x)} \sum q^i_j(x) q^i_j \right)
\end{align*}
The first term lies in $\Delta^1 * \cdots * \Delta^r$,  and the second in $\omega^1 * \cdots * \omega^r$. To further decompose the first term, we 
name $p_j$ the minimum of $\{ p^1_j(x), \ldots, p^r_j(x)\}$, then 
\begin{align*}
\sum_{i,j} p^i_j(x) p^i_j  &=  \sum_{i,j} (p^i_j(x) - p_j)p^i_j+ \sum_i (p_1 p_1^i + \cdots + p_r p^i_r)
\end{align*}
Hence, we can write $\sum_{i,j} p^i_j(x) p^i_j$ as a point in the join of $\delprodthin{(\Delta)}r$ and $\delta_r^*\Delta$.
\end{proof}


\subsection{Proof of Lemma~\ref{lem_commuting_square}}
\label{sec_construction_square}

We split the proof of Lemma~\ref{lem_commuting_square} in three steps.

\begin{lemma}[A First square]
Let $\sigma_1,...,\sigma_r$ be balls properly mapped to $B^d$ by $f \colon \sigma_1 \sqcup \cdots \sqcup \sigma_r \to B^d$, with the dimensional restrictions of Lemma~\ref{lem_whitney_trick}. 

Then the diagram
\begin{equation} \label{eq_diag_I}
\begin{gathered}
\xymatrix{
\boundary(\sigma_1 \times \cdots \times \sigma_r) \ar[d] \ar[r]^-{\iso} &
 \boundary \sigma_1  * \cdots * \boundary \sigma_r \ar[d]    \\
\boundary(B \times \cdots \times B)  \setminus \delta_r(B)   \ar[r]^-{\iso} &
 \boundary B * \cdots * \boundary B \setminus \delta_r(B) 
}
\end{gathered}
\end{equation}
commutes up to homotopy.

The map on the left is defined as before\footnote{It is easy to see that $\boundary(\sigma_1 \times \cdots \times \sigma_r) $ maps into 
\[
\boundary(B \times \cdots \times B) \setminus \delta_r(B)  \subseteq B \times \cdots \times B \setminus \delta_r(B) 
\]
since the $\sigma_i$ are \textit{properly} mapped in $B^d$. Also, if $B$ is represented as a cube $I^d = [-1,1]^d$, then 
\[
B \times \cdots \times B \setminus \{ (0 , \ldots ,0 )\}
\]
can be retracted onto its boundary, and this defines a retraction from 
\[
B \times \cdots \times B \setminus \delta_r(B)  \quad \text{to} \quad \boundary(B \times \cdots \times B) \setminus \delta_r(B) .
\]
}. The map on the right maps
\[
\emptyset * \cdots * \boundary \sigma_i * \cdots * \emptyset \rightarrow \emptyset * \cdots * \boundary B * \cdots * \emptyset
\]
and extends linearly. The two horizontal homeomorphisms are obtained in the following way: we represent $B^d$ as $I^d = [-1,1]^d$, then $\boundary B * \cdots * \boundary B$ can be formed inside of the cube $B \times \cdots \times B$, \ie
\[
\boundary B * \cdots * \boundary B \subseteq B \times \cdots \times B \quad \text{(Figure~\ref{fig_join_in_cube})}
\]
and by radial projection from the center of the cube, we get that $\boundary (B \times \cdots \times B)$ is homeomorphic with $\boundary B * \cdots * \boundary B$. This defines the bottom horizontal arrow, and the same construction work with the top horizontal arrow.

\begin{figure}
\begin{center}
\includegraphics[scale=1]{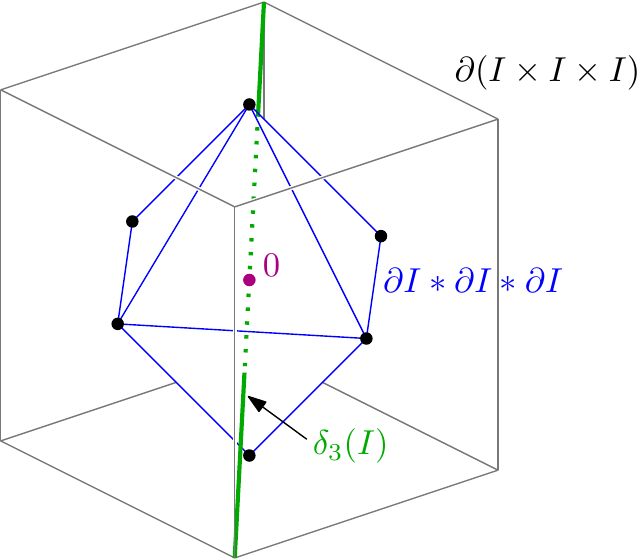}
\caption{For $r=3$ and $d=1$: the cube $I\times I \times I$ contains the join $\boundary I * \boundary I * \boundary I$.}
\label{fig_join_in_cube}
\end{center}
\end{figure}
\end{lemma}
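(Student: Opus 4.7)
The plan is to make both horizontal homeomorphisms explicit via cubical radial projection, rewrite both compositions as maps into $\boundary(B^r)\setminus \delta_r(B)$, and interpolate them by a straight-line homotopy that never meets $\delta_r(B)$.

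First, represent $B=[-1,1]^d$ and each $\sigma_i=[-1,1]^{s_i}$ as cubes centred at the origin. Then the bottom arrow $\phi_B$ is the cubical radial projection $(y_1,\ldots,y_r)\mapsto(t_i;\hat y_i)_i$ with $\hat y_i=y_i/|y_i|_\infty\in\boundary B$ and $t_i=|y_i|_\infty/\sum_j|y_j|_\infty$, whose inverse embeds the join back in the cube by $(t_i;\hat y_i)\mapsto (t_i\hat y_i/\max_j t_j)_i$; the top arrow $\phi_\sigma$ is given by the same formula. Writing $\hat x_i:=x_i/|x_i|_\infty$ for $(x_1,\ldots,x_r)\in\boundary(\sigma_1\times\cdots\times\sigma_r)$ (choosing any unit vector when $|x_i|_\infty=0$, which only multiplies weight-zero coordinates), a direct chase shows that the right-then-down composition corresponds, via $\phi_B^{-1}$, to the map
\[
A(x_1,\ldots,x_r)\;:=\;\bigl(|x_i|_\infty\,f(\hat x_i)\bigr)_{i=1}^r,
\]
while the down-then-right composition corresponds to $\t f|_{\boundary}(x_1,\ldots,x_r)=(f(x_i))_{i=1}^r$; both land in $\boundary(B^r)\setminus\delta_r(B)$ thanks to the properness of $f$ and to the hypothesis $f\sigma_1\cap\cdots\cap f\boundary\sigma_i\cap\cdots\cap f\sigma_r=\emptyset$ built into Definition~\ref{def_intersection_class}.

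Next, connect these two maps by the straight-line homotopy
\[
H_\lambda(x_1,\ldots,x_r)\;:=\;\bigl((1-\lambda)\,f(x_i)+\lambda\,|x_i|_\infty\,f(\hat x_i)\bigr)_{i=1}^r,\qquad \lambda\in[0,1],
\]
so that $H_0=\t f|_{\boundary}$ and $H_1=A$. Each coordinate is a convex combination of two points of the convex cube $B$, so $H_\lambda$ lands in $B^r$. Picking $k_0$ with $|x_{k_0}|_\infty=1$ gives $\hat x_{k_0}=x_{k_0}$ and hence $H_\lambda(x_{k_0})=f(x_{k_0})\in\boundary B$, forcing $|H_\lambda(x_{k_0})|_\infty=1$; since $|H_\lambda(x_i)|_\infty\le 1$ for every $i$, the image of $H_\lambda$ lies in $\boundary(B^r)$ throughout. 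Post-composing with $\phi_B$ then yields the sought-for homotopy in $\boundary B*\cdots*\boundary B\setminus\delta_r(B)$, provided the thin diagonal is avoided.

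The hard part is precisely this last verification. Suppose $H_\lambda(x_1)=\cdots=H_\lambda(x_r)=:y$ for some $\lambda\in[0,1]$ and $(x_i)\in\boundary(\sigma_1\times\cdots\times\sigma_r)$; the argument above shows $|y|_\infty=1$. For $\lambda>0$, the triangle inequality
\[
1\;=\;|H_\lambda(x_i)|_\infty\;\le\;(1-\lambda)\,|f(x_i)|_\infty+\lambda\,|x_i|_\infty,
\]
together with $|f(x_i)|_\infty,|x_i|_\infty\le 1$, forces $|x_i|_\infty=1$ for every $i$, so $x_i\in\boundary\sigma_i$ and $H_\lambda(x_i)=f(x_i)=y$. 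For $\lambda=0$, $H_0=\t f$ directly yields $y\in f\boundary\sigma_{k_0}\cap\bigcap_{j\ne k_0}f\sigma_j$. In either case, $y\in f\sigma_1\cap\cdots\cap f\boundary\sigma_{k_0}\cap\cdots\cap f\sigma_r$, contradicting the hypothesis built into Definition~\ref{def_intersection_class}. This contradiction completes the verification; apart from it, the argument is just coordinate bookkeeping.
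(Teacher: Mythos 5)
Your proof is correct and follows essentially the same route as the paper's: make the radial-projection homeomorphisms explicit in cube coordinates, identify the two compositions (via the inverse of the bottom homeomorphism) with $\t f|_{\boundary}$ and $(x_i)\mapsto(|x_i|_\infty f(\hat x_i))_i$, and connect them by the straight-line homotopy, checking via the infinity-norm inequality that it stays in $\boundary(B^r)$ and off the thin diagonal because equality forces all $x_i\in\boundary\sigma_i$, where the two maps coincide and the diagonal is excluded by the hypothesis of Definition~\ref{def_intersection_class}. Your write-up is in fact slightly more careful than the paper's (separate treatment of $\lambda=0$ and explicit verification that the homotopy lands in $\boundary(B^r)$), but the argument is the same.
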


\begin{proof}
The top left-to-right arrow is defined as (where $|.|$ is the infinity-norm)
\[
(x_1, \ldots, x_r ) \mapsto \frac{|x_1|}{\sum |x_i|} \underbrace{ \frac {x_1} {|x_1| }}_{\in \boundary \sigma_1}
\oplus \cdots  \oplus
\frac{|x_r|}{\sum |x_i|} \underbrace{  \frac {x_r} {|x_r| } }_{\in \boundary \sigma_r} \subset \boundary \sigma_1 * \cdots * \boundary \sigma_r
\]
with the convention that, if $|x_i| =0$, then $ \frac {x_i} {|x_i|}$ is undefined (but since its coefficient in the join is $0$, this is not a problem). The inverse application divides a point $p \in \boundary \sigma_1 * \cdots * \boundary \sigma_r \subset \sigma_1 \times \cdots \times \sigma_r$ by its $|.|$-norm (as a point in $\sigma_1 \times \cdots \times \sigma_r$) to project the point on the boundary $\boundary(\sigma_1 \times \cdots \times \sigma_r)$.

Starting from the top-left corner of Diagram~\eqref{eq_diag_I}, we follow the directions: right, down, left. We obtain a map $\boundary(\sigma_1 \times \cdots \times \sigma_r) \to \boundary(B \times \cdots \times B)  \setminus \delta_r(B)$ defined as
\begin{equation} \label{eq_diag_I_round_trip}
(x_1 , \ldots ,x_r) \mapsto 
\frac{ \big( \frac{|x_1|}{\sum |x_i|}  f\frac {x_1} {|x_1| }, \cdots , \frac{|x_r|}{\sum |x_i|} f\frac {x_r} {|x_r| } \big)  }
{ \big| \big( \frac{|x_1|}{\sum |x_i|}  f\frac {x_1} {|x_1| }, \cdots , \frac{|x_r|}{\sum |x_i|} f\frac {x_r} {|x_r| } \big) \big|}.
\end{equation}

To conclude, we must show that \eqref{eq_diag_I_round_trip} is homotopic to $(x_1 , \ldots ,x_r) \mapsto (fx_1,\ldots,fx_r)$.

Let us assume, without loss of generality, that $x_1 \in \boundary \sigma_1$. Then, $|x_1| = 1$, hence $ \big|  \frac{|x_1|}{\sum |x_i|}  f\frac {x_1} {|x_1| }  \big| = \frac{|fx_1|}{\sum |x_i|} = \frac{1}{\sum |x_i|}$. Therefore, the denominator in \eqref{eq_diag_I_round_trip} must be $\frac{1}{\sum |x_i|}$, and so \eqref{eq_diag_I_round_trip} becomes
\begin{equation}\label{eq_diag_I_round_trip_simpler}
\Big(
|x_1|f\frac{x_1}{|x_1|}, ... ,|x_r|f\frac{x_r}{|x_r|}
\Big)
\end{equation}
which is homotopic to  $(x_1 , \ldots ,x_r) \mapsto (fx_1,\ldots,fx_r)$ by a straight-line homotopy. Indeed, by contradiction, let us assume that for a given $(x_1,...,x_r)\in \boundary(\sigma_1\times \cdots\times \sigma_r)$ and a given $t\in(0,1)$, the straight-line homotopy intersects the diagonal $\delta_r(B)$. Without loss of generality, $x_1 \in \boundary \sigma_1$. But then, we must have $fx_2,...,fx_r \in \boundary B$, which implies that $x_2\in \boundary\sigma_2$, ..., $x_r\in\boundary \sigma_r$. But \eqref{eq_diag_I_round_trip_simpler} is the identify on such an $r$-tuple $(x_1,...,x_r)$, so it cannot intersect the diagonal.
\end{proof}

\begin{remark}
\begin{itemize}
\item 
We define $L:= {d(r-1) - s_r(r-2)}$ to shorten the exponent in $\Sigma^{d(r-1) - s_r(r-2)}$.
\item 
Recall that for $g$ a suspended map (Definition~\ref{def_suspended_map}), we define $\tau_i := g|_{\sigma_i}^{-1}\sigma_r$.
\end{itemize}
\end{remark}

\begin{lemma}[Second square.] 
Let $\sigma_1 \sqcup \, \cdots \, \sqcup \sigma_r \to B^d$ be a suspended map. Then the diagram
\begin{equation} \label{eq_diag_top}
\begin{gathered}
\xymatrix{
\boundary\sigma_1 * \cdots * \boundary \sigma_r \ar[d]  \ar[r]^-{\iso}  &
\Sigma^L( \boundary\tau_1  * \cdots * \boundary\tau_{r-1} )\ar[d]   \\
\boundary B * \cdots * \boundary B  \setminus \delta_r(B)    &
\Sigma^L ( \boundary \sigma_r * \cdots * \boundary \sigma_r  \setminus \delta_{r-1}(\sigma_r))  \ar@{_{(}->}[l]
}
\end{gathered}
\end{equation}
commutes. The map on the left is the obvious one, and the map on the right is the $(d(r-1) -s_r(r-2))$-suspension of the map 
\[
\boundary\tau_1  * \cdots * \boundary\tau_{r-1} \rightarrow  \boundary \sigma_r * \cdots * \boundary \sigma_r  \setminus \delta_{r-1}(\sigma_r).
\]
The horizontal maps are obtained as rearrangements using 
\[
B^d = \sigma_r * S^{d -s_r-1} \quad \text{and} \quad \sigma_i = \tau_i * S^{d -s_r-1}, \text{ for $i \neq r$.}
\]
More precisely, the top-horizontal homeomorphism is obtained in the following way
\begin{align*}
\boundary\sigma_1 * \cdots * \boundary \sigma_r  &=
\underbrace{\left(\boundary \tau_1 * S^{d-s_r-1}\right)}_{\boundary \sigma_1} * \cdots *
\underbrace{\left(\boundary \tau_{r-1} * S^{d-s_r-1}\right)}_{\boundary \sigma_r} * 
\boundary \sigma_r  \\
& \iso
\underbrace{(\boundary \sigma_r * S^{(r-1) (d-s_r)-1})}_{S^{(r-1)d -s_r(r-2)-1}} * \boundary \tau_1 * \cdots * \boundary \tau_{r-1},
\end{align*}
and this last expression is the suspension applied $((r-1)d -s_r(r-2))$-times on $\boundary \tau_1 * \cdots * \boundary \tau_{r-1}$. 

The bottom horizontal inclusion is derived, in a very similar fashion, as
\begin{multline} \label{eq_def_suspension_in_B}
S^{(r-1)d -s_r(r-2)-1}*  ( \boundary \sigma_r * \cdots * \boundary \sigma_r )  \\
\begin{aligned}
& \iso \left( \boundary \sigma_r * S^{d-s_r-1} \right) * \cdots * \left(\boundary \sigma_r * S^{d-s_r-1}\right) * (S^{s_r-1} * \emptyset )\\
&\iso \boundary B * \cdots * \boundary B * (\boundary \sigma_r * \emptyset) \\
& \subseteq  \boundary B * \cdots * \boundary B * \boundary B.
\end{aligned}
\end{multline}
\end{lemma}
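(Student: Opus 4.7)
The plan is to prove commutativity by tracing a generic point around both paths of the diagram and invoking the interpolation formula that characterises suspended maps. First I would parameterise a point of $\boundary\sigma_1 * \cdots * \boundary\sigma_r$ in the form $\sum_{i=1}^r t_i x_i$ with $\sum_i t_i = 1$ and $t_i \ge 0$; then for each $i<r$ further decompose $x_i = u_i y_i + (1-u_i) z_i$ with $y_i \in \boundary\tau_i$ and $z_i \in S^{d-s_r-1}$, using the splitting $\boundary\sigma_i = \boundary\tau_i * S^{d-s_r-1}$ built into the definition of a suspended map; for $i=r$ simply set $x_r = y_r \in \boundary\sigma_r$. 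The top horizontal homeomorphism is then explicit: grouping the sphere pieces $z_1,\ldots,z_{r-1}$ together with $y_r$ into the single factor $\boundary\sigma_r * S^{(r-1)(d-s_r)-1} \iso S^{L-1}$ produces a point in $S^{L-1} * \boundary\tau_1 * \cdots * \boundary\tau_{r-1} = \Sigma^L(\boundary\tau_1 * \cdots * \boundary\tau_{r-1})$.

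Next I would follow this point along the right vertical map, which applies $g$ to each $y_i \in \boundary\tau_i$ (landing in $\boundary\sigma_r$, since $g$ maps $\tau_i$ properly onto $g\sigma_r$) while leaving the sphere coordinates untouched, and then along the bottom horizontal inclusion \eqref{eq_def_suspension_in_B}. That inclusion redistributes the $r-1$ copies of $S^{d-s_r-1}$ back over the first $r-1$ join positions via $\boundary B = \boundary\sigma_r * S^{d-s_r-1}$ and sends the leftover $\boundary\sigma_r$ into the last $\boundary B$ factor. The resulting point equals $g(x_1) * \cdots * g(x_r)$ \emph{provided} that, for $i<r$, the equality $g(u_i y_i + (1-u_i) z_i) = u_i g(y_i) + (1-u_i) z_i$ holds in the join $g\sigma_r * S^{d-s_r-1} = B^d$, together with $g(x_r)=x_r$ under the same identification. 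But these are exactly the defining properties of a suspended map (Definition~\ref{def_suspended_map}): on each $\sigma_i$ with $i<r$, the map $g$ is constructed by interpolating linearly between $g|_{\tau_i} \colon \tau_i \to g\sigma_r$ and the fixed homeomorphism on $S^{d-s_r-1}$, and $g|_{\sigma_r}$ is the unknotted embedding with respect to which $B^d = g\sigma_r * S^{d-s_r-1}$ is defined. Therefore this path agrees point-for-point with the left vertical arrow $(x_1,\ldots,x_r) \mapsto g(x_1) * \cdots * g(x_r)$.

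The main obstacle is the bookkeeping of the various join rearrangements: one must verify that the single sphere factor $S^{L-1}$ appearing on the top-right is reassembled into $\boundary B * \cdots * \boundary B$ in exactly the way that $g$ distributes the $S^{d-s_r-1}$ pieces across the $r-1$ non-$\sigma_r$ coordinates, with the remaining $S^{s_r-1}$ matched to $\boundary\sigma_r$ in the last slot. Both horizontal arrows are built from the same canonical associativity and commutativity isomorphisms of the join, and $g$ is by construction the identity on each $S^{d-s_r-1} \subset \sigma_i$, so the two distributions coincide. In particular the diagram commutes on the nose rather than merely up to homotopy, and neither general position nor the metastable hypothesis is invoked; the lemma is a purely structural statement about the geometry of a suspended map.
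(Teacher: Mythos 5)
Your proposal is correct and matches the paper's treatment: the paper simply asserts that commutativity ``follows easily'' from the definition of a suspended map (deferring only the homotopy-equivalence of the bottom inclusion to the third square), and your explicit point-tracing through the join coordinates is precisely the verification being left to the reader. In particular your observations that $g$ is the join of $g|_{\tau_i}$ with the identity on $S^{d-s_r-1}$, that the horizontal arrows are weight-preserving rearrangements of iterated joins, and that the square therefore commutes on the nose without any appeal to general position or the metastable range, are all consistent with the paper.
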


\begin{proof}
It follows easily that \eqref{eq_diag_top} commutes. We show in the next step that the bottom horizontal inclusion is an homotopy equivalence, using Diagram~\eqref{eq_diag_top_to_simpl}.
\end{proof}

\begin{lemma}[Third square] \label{app_third_square}
Let $\sigma_1 \sqcup \, \cdots \, \sqcup \sigma_r \to B^d$ be a suspended map. Then the diagram
\begin{equation} \label{eq_diag_top_to_simpl}
\begin{gathered}
\xymatrix{
\boundary B * \cdots * \boundary B  \setminus \delta_r(B)    &
\Sigma^L ( \boundary \sigma_r * \cdots * \boundary \sigma_r  \setminus \delta_{r-1}(\sigma_r))  \ar@{_{(}->}[l] \\
\delprodthin{(\boundary B)} r   \ar@{_{(}->}^{\homeq}[u]&
\Sigma^L \delprodthin{ (\boundary \sigma_r) }{r-1}   \ar@{_{(}->}_{\homeq}[l] \ar@{_{(}->}^{\homeq}[u]
}
\end{gathered}
\end{equation}
commutes, and the three arrows with the symbol `$\homeq$' are homotopy equivalences.
\end{lemma}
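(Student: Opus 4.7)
My plan is to treat the lemma as two separate claims: commutativity of the square, and that each of the three arrows marked $\homeq$ is a homotopy equivalence.

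Commutativity is essentially tautological. The bottom horizontal arrow is defined by restricting the same join rearrangement used to build the top horizontal (cf.~\eqref{eq_def_suspension_in_B}), based on the splitting $\boundary B \iso \boundary \sigma_r * S^{d-s_r-1}$. The vertical arrows are the canonical inclusions of the simplicial deleted join into the topological deleted join. Since the rearrangement sends $r$-wise combinatorially disjoint tuples of faces of $\boundary \sigma_r$ to $r$-wise combinatorially disjoint tuples of faces of $\boundary B$, a simplex of $\Sigma^L \delprodthin{(\boundary \sigma_r)}{r-1}$ has the same image in $\boundary B * \cdots * \boundary B \setminus \delta_r(B)$ along either path around the square.

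The two vertical arrows are direct applications of Lemma~\ref{lem_top_retracts_simpl}. For the left vertical, I first note that $\boundary B * \cdots * \boundary B \setminus \delta_r(B)$ coincides with the topological deleted join $(\boundary B)^{*r} \setminus \delta_r^*(\boundary B)$, because any diagonal point lying in the $r$-fold join of $\boundary B$ must have its underlying base-point on $\boundary B$; then Lemma~\ref{lem_top_retracts_simpl} gives an $\sym_r$-equivariant deformation retraction onto $\delprodthin{(\boundary B)}{r}$. For the right vertical, apply Lemma~\ref{lem_top_retracts_simpl} with $\boundary \sigma_r$ and $r-1$ in place of $K$ and $r$, then suspend the resulting retraction $L$ times; suspension preserves (equivariant) homotopy equivalences.

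The bottom horizontal is the final claim. Once commutativity and the two vertical equivalences are in hand, the commutative square reduces the problem to showing that the top horizontal inclusion is a homotopy equivalence. Both its source and target are homotopy equivalent to $S^{d(r-1)-1}$: the target retracts equivariantly onto $S^{d(r-1)-1}$ by the standard deformation $(\R^d)^r \setminus \delta_r(\R^d) \homeq S^{d(r-1)-1}$ already used in Lemma~\ref{lem:delprod-necessary-meta}, while the source is the $L$-fold suspension of a space that the same retraction, applied to $\sigma_r$ with $r-1$ factors, identifies with $S^{s_r(r-2)-1}$, giving a sphere of total dimension $L + s_r(r-2) - 1 = d(r-1) - 1$. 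Under these two retractions the top horizontal inclusion becomes homotopic to the identity, so it has degree one and is therefore a homotopy equivalence between CW spheres. The main obstacle will be this last degree-one identification: one must carefully follow how the join rearrangement pairs the $L$ suspension coordinates of the source with the ``extra'' $S^{d-s_r-1}$ factors of the target to confirm that the inclusion is compatible (up to equivariant homotopy) with the two sphere retractions.
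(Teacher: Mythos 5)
Your treatment of commutativity and of the two vertical arrows matches the paper's: both verticals are instances of Lemma~\ref{lem_top_retracts_simpl}, and your observation that $\boundary B * \cdots * \boundary B \setminus \delta_r(B)$ really does coincide with the topological deleted join $(\boundary B)^{*r}\setminus\delta_r^*(\boundary B)$ (so that the lemma applies) is a point the paper leaves implicit. The problem is the bottom horizontal arrow, which is the actual content of the lemma. You reduce it, via two-out-of-three in the commuting square, to showing that the top horizontal inclusion $\Sigma^L(\boundary\sigma_r * \cdots * \boundary\sigma_r \setminus \delta_{r-1}(\sigma_r)) \hookrightarrow \boundary B * \cdots * \boundary B\setminus\delta_r(B)$ is a homotopy equivalence, and you then assert that after composing with the standard retractions of both sides onto $S^{d(r-1)-1}$ this inclusion ``becomes homotopic to the identity, so it has degree one.'' That assertion is precisely the statement to be proved, and you yourself flag it as the main obstacle without carrying it out. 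Nothing formal forces an inclusion of one $(d(r-1)-1)$-sphere-like subspace into another to have degree $\pm1$; a priori it could be null-homotopic. So as written the proof is incomplete at its crucial step.

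The paper avoids any degree computation by working entirely on the simplicial side. Writing $\sigma_r=I^{s_r}$ and $B=\sigma_r * S^{d-s_r-1}$, it uses two elementary facts: the simplicial deleted join distributes over joins, $\delprodthin{(L_1*\cdots*L_n)}{k}\iso\delprodthin{(L_1)}{k}*\cdots*\delprodthin{(L_n)}{k}$, and $\delprodthin{(\boundary I)}{k}$ collapses simplicially onto the subcomplex $\boundary I * \cdots*\emptyset*\cdots*\boundary I$ with one join factor replaced by $\emptyset$. Applying this factor by factor shows that $\delprodthin{(\boundary B)}{r}$ and $\Sigma^L\delprodthin{(\boundary\sigma_r)}{r-1}$ both collapse onto one and the same subsphere (the subspace exhibited in~\eqref{eq_def_suspension_in_B}), whence the bottom inclusion is a homotopy equivalence; that the top horizontal is an equivalence is then a consequence, not an input. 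If you want to keep your degree-theoretic route, you would have to trace the join rearrangement explicitly through the two retractions, which amounts to redoing this bookkeeping in a less structured form; I recommend adopting the collapsing argument instead.
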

\begin{proof}
Here,  $\delprodthin{(-)}k$ is the \emph{$k$-fold $k$-wise (simplicial) deleted join}. The definition is given in Section~\ref{app_deleted_joins}, where we also prove that both left and right vertical arrows are homotopy equivalences (Lemma~\ref{lem_top_retracts_simpl}).
Hence, we are left with the bottom-horizontal map.

We are going to use the two following facts that are easy to check: 
\begin{enumerate}
\item 
for any simplicial complexes $L_1, \ldots, L_n$,
$\delprodthin{(L_1 * \cdots * L_n)} k \iso \delprodthin {(L_1)}k * \cdots* \delprodthin {(L_n)}k$ , 
\item
$\delprodthin{(\boundary I)}k$ collapses simplicially onto $\boundary I * \cdots *\boundary I * \emptyset * \boundary I * \cdots * \boundary I$ (i.e., the $k$-join of $\boundary I$ where one of the factor is replaced by $\emptyset$).
\end{enumerate}

Therefore, if we represent $\sigma_r$ as $I^{s_r}$, we have
\begin{align*}
\delprodthin{(\boundary \sigma_r)}{r-1} &\iso \delprodthin{(\boundary I * \cdots * \boundary I)}{r-1} 
= \delprodthin{(\boundary I)}{r-1} * \cdots * \delprodthin{(\boundary I)}{r-1}.
\end{align*}
We collapse each of the $ \delprodthin{(\boundary I)}{r-1} $ to $ \boundary I * \cdots * \boundary I*\emptyset$ , hence $\delprodthin{(\boundary \sigma_r)}{r-1}$ collapses to
$
\boundary \sigma_r * \cdots * \boundary \sigma_r * \emptyset.
$
The suspension of this space in $\delprodthin{(\boundary B)} r $ is, by equation~\eqref{eq_def_suspension_in_B},
\[
\left( \boundary \sigma_r * S^{d-s_r-1} \right) * \cdots * \left(\boundary \sigma_r * S^{d-s_r-1}\right) *
 \left( \emptyset * S^{d-s_r-1} \right) 
* (S^{s_r-1} * \emptyset ),
\]
We can collapse $\delprodthin{(\boundary B)} r $ onto this last space. Indeed,
\begin{align*}
\delprodthin{(\boundary B)} r &= \delprodthin{(\boundary \sigma_r *S^{d-s_r-1} )} r
=  \delprodthin{(\boundary \sigma_r )}r * \delprodthin{(S^{d-s_r-1} )} r.
\end{align*}
The first term $\delprodthin{(\boundary \sigma_r )}r$ can be factors into terms $\delprodthin{(\boundary I)}r$, that we all collapse onto $\boundary I * \cdots * \boundary I * \emptyset * \boundary I$. 
For the second  term $ \delprodthin{(S^{d-s_r-1} )} r$, we collapse onto $\boundary I * \cdots * \boundary I * \emptyset $. 

Then, since both $\delprodthin{(\boundary B)} r $ and $\Sigma^L \delprodthin{(\boundary \sigma_r)}{r-1}$ can be collapsed onto the same sub-sphere, it follows that the bottom inclusion in \eqref{eq_diag_top_to_simpl} is an homotopy equivalence.
\end{proof}

\begin{proof}[Proof of Lemma~\ref{lem_commuting_square}]
Combining the all the previous Lemmas in this section, we get the following commuting diagram 
\begin{equation*}
\begin{gathered}
\xymatrix{
\boundary(\sigma_1 \times \cdots \times \sigma_r) \ar[d]  
 \ar[r]^-{\homeq}
 &
 \Sigma^L( \boundary \tau_1  * \cdots * \boundary\tau_{r-1} )\ar[d]  \\
\boundary(B \times \cdots \times B)  \setminus \delta_r(B)   \ar[r]^-{\homeq} &
 \Sigma^L ( \boundary \sigma_r * \cdots * \boundary \sigma_r  \setminus \delta_{r-1}(\sigma_r))
}
\end{gathered}
\end{equation*}
Reusing the first square~\eqref{eq_diag_I} on
\[
\boundary \tau_1 , \cdots, \boundary\tau_{r-1},
\]
we form
\begin{equation*}
\begin{gathered}
\xymatrix{
\Sigma^L( \boundary\tau_1  * \cdots * \boundary\tau_{r-1} )\ar[d]  \ar[r]^-{\homeq} 
&
\Sigma^L ( \boundary\tau_1 \times \cdots \times  \boundary\tau_{r-1}) \ar[d]
\\
\Sigma^L ( \boundary \sigma_r * \cdots * \boundary \sigma_r  \setminus \delta_{r-1}(\sigma_r)) \ar[r]^-{\homeq}
&
\Sigma^L ( \boundary(\sigma_r \times \cdots \times   \sigma_r  )  \setminus \delta_{r-1}(\sigma_r) )
}
\end{gathered}
\end{equation*}

Combining the last two diagrams, we get the diagram~\eqref{eq_diam_simple}, as wanted.
\end{proof}
\subsection{The complete proof of Lemma~\ref{lem_whitney_trick}}

\begin{proof}[Proof of Lemma~\ref{lem_whitney_trick}]

\noindent\textbf{First Part.} We apply Proposition~\ref{prop_surg} to make each of the $\sigma_i \cap \sigma_r$ 
$
(s -d(r-1))\text{-connected}.
$ 
Then there exists, by Theorem~\ref{thm_Zeeman}, for each $i=1,\ldots, r-1$, a collapsible subspace $C_i$ of $\sigma_i \cap \sigma_r$ of dimension at most $s -d(r-1) +1$ such that
$
C_i \supseteq \sigma_1 \cap \cdots \cap \sigma_r.
$ 

In $\sigma_r$ there exists a collapsible space $C$ of dimension at most $s -d(r-1) +2$ containing $C_1 , \ldots, C_r$. Furthermore, by general position\footnote{We must have
\begin{align*}
(s_1 + \cdots + s_r -d(r-1) + 2) + (s_1 + s_r-d) -s_r &<0\\
\intertext{\ie} s_1 + \cdots + s_r + s_1 + 2 &< rd,
\end{align*}
which is implied by $(r+1)m + 2 < rd$.
}, $C$ intersects $\sigma_i \cap \sigma_r$ only on $C_i$. We take a regular neighbourhood $N $ of $C$ in $B^d$. By construction, $N$ intersects $\sigma_i \cap \sigma_r$ in a regular neighbourhood of $C_i$, which must be a ball ($C_i$ is disjoint for the other $\sigma_j$ for $j \neq i,r$ by general position)\footnote{This follows, once again by the metastable range hypothesis. Indeed,
\begin{align*}
(s_1 + \cdots + s_r -d(r-1) + 1) + (s_i + s_j + s_r-2d) -(s_i + s_r-d) &<0\\
\intertext{\ie} s_1 + \cdots + s_r + s_j + 1 &< rd,
\end{align*}
which is implied by $(r+1)m + 1 < rd$.
}. Hence, ``retracting'' from $B^d$ to the ball $N$ (as we did in the proof of Theorem~\ref{thm_Hae_Web_an}, equation~\ref{eq_retraction_sigma}), we are reduced to the situation of Proposition~\ref{prop_case_balls}, which we can then directly apply.

\paragraph{Second Part.} 
For $r=2$, the result already appeared in Weber~\cite[Proposition~3 \& proof of Lemma~1]{Weber:Plongements-de-polyhedres-dans-le-domaine-metastable-1967}.  \Ie if $\sigma^s$ and $\tau^t$ are two balls properly contained in $B^d$ in the metastable range ($m\geq s,t$ with $d \geq \frac 32 m + 3$) and without intersection. Then  for every $\alpha \in \pi_{s+t}(S^{d-1})$ there exists a proper isotopy $J_t$ of $B$ such that $J_1 \sigma \cap \tau = \emptyset$, and the homotopy class defined by
\[
\boundary (I \times \sigma \times \tau) \xrightarrow{J_t  \incl_{\sigma} \times \incl_{\tau}} B^d \times B^d \setminus \delta_2 B^d
\]
represents $\alpha$ (after identifications).

Hence, we can work inductively: we assume that the part~2 of the Lemma is already true for $(r-1)$ balls, and we show how construct the isotopy $J_t : B^d \rightarrow B^d$ for $r$ balls.

Let $\sigma_1,..., \sigma_r$ be the $r$ balls properly contained in $B^d$ as in the hypothesis of part 2 of the Lemma. In particular,
\[
\sigma_1 \cap \cdots \cap \sigma_r = \emptyset \quad \mbox{and} \quad \sigma_2 \cap \cdots \cap \sigma_r \neq \emptyset,
\]
and we can assume that $\sigma_r$ is unknotted in $B^d$, i.e., $B^d = \sigma_r * S^{d-s_r-1}$.

\begin{claim}
We can assume that for $i=1,...,(r-1),$ $\sigma_i\cap \sigma_r$ are balls properly contained inside $\sigma_r$. Furthermore, we can assume that $\sigma_2 \cap \cdots \cap \sigma_r$ is also a ball properly contained in $\sigma_r$.
\end{claim}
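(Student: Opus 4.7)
The plan is to mirror the engulfing-plus-regular-neighborhood argument used in Lemma~\ref{lem_reduction} and in the first part of Lemma~\ref{lem_whitney_trick}, but carried out inside the ball $\sigma_r$; now $\sigma_2\cap\cdots\cap\sigma_r$ will play the role previously played by the $r$-fold intersection $\sigma_1\cap\cdots\cap\sigma_r$ (the latter being empty in the present setting).

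First, by Lemma~\ref{sublem_trivial_bundle_intersection}, I will assume each $\sigma_i$ meets $\sigma_r$ transversely. Setting $d_0:=\dim(\sigma_2\cap\cdots\cap\sigma_r)$, I will use a variant of Proposition~\ref{prop_surg}---obtained by reading Lemma~\ref{lem_surgey_step} with target connectivity raised from $\dim(\sigma_1\cap\cdots\cap\sigma_r)$ to $d_0$---to produce proper ambient isotopies of $B^d$ making each pairwise intersection $\sigma_i\cap\sigma_r$ be $d_0$-connected. The surgery dimension bound in Lemma~\ref{sublem_trivial_bundle_S_k} remains valid in the $r$-metastable range \eqref{eq_metastable}, by a direct numerical check. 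A parallel, milder surgery then arranges that $\sigma_2\cap\cdots\cap\sigma_r$ is itself $(d_0-1)$-connected, hence contained in a collapsible polyhedron of its own dimension.

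Next, applying Theorem~\ref{thm_Zeeman} successively yields: a collapsible $C^{*}\subseteq\sigma_2\cap\cdots\cap\sigma_r$ of dimension at most $d_0+1$ containing $\sigma_2\cap\cdots\cap\sigma_r$; for each $i=2,\ldots,r-1$, a collapsible $C_i\subseteq\sigma_i\cap\sigma_r$ of dimension at most $d_0+2$ containing $C^{*}$; a collapsible $C_1\subseteq\sigma_1\cap\sigma_r$ (a single point will suffice); and a collapsible $C\subseteq\sigma_r$ of dimension at most $d_0+3$ containing $C_1\cup\cdots\cup C_{r-1}$. By general position---whose required inequalities follow from \eqref{eq_metastable} exactly as in the proof of Lemma~\ref{lem_reduction}---$C$ meets each $\sigma_i\cap\sigma_r$ only along $C_i$ and meets $\sigma_2\cap\cdots\cap\sigma_r$ only along $C^{*}$.

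Finally, take a regular neighborhood $N$ of $C$ in $\sigma_r$. Since $C$ is collapsible, $N$ is a PL-ball; by the standard correspondence between regular neighborhoods and collapsible subpolyhedra, $N\cap(\sigma_i\cap\sigma_r)$ is a regular neighborhood of the collapsible $C_i$ in $\sigma_i\cap\sigma_r$ and hence a PL-ball, and similarly $N\cap(\sigma_2\cap\cdots\cap\sigma_r)$ is a PL-ball. The Combinatorial Annulus Theorem then produces a proper ambient isotopy of $\sigma_r$ throwing it onto $N$, which extends to a proper ambient isotopy of $B^d$ via the join decomposition $B^d=\sigma_r*S^{d-s_r-1}$. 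The resulting configuration satisfies both assertions of the claim. The main obstacle will be the bookkeeping of the various connectivity, surgery, and general-position dimension bounds, all of which need to fit inside \eqref{eq_metastable} simultaneously; this is a direct numerical verification in the spirit of Section~\ref{sec_increasing_the_connectivity}.
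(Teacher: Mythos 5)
Your proposal takes a genuinely different route from the paper, and it has a gap I do not see how to close. The problematic step is the engulfing of the $(r-1)$-fold intersection: you ask for a collapsible $C^{*}\subseteq\sigma_2\cap\cdots\cap\sigma_r$ of dimension at most $d_0+1$ \emph{containing} $\sigma_2\cap\cdots\cap\sigma_r$. A subpolyhedron of a set containing that set is the set itself, so you are in effect claiming that the $d_0$-dimensional manifold $\sigma_2\cap\cdots\cap\sigma_r$ is collapsible; Theorem~\ref{thm_Zeeman} cannot deliver this, since it requires the engulfed polyhedron $X$ to satisfy $\dim X\le k\le \dim M-3$ and to lie in the interior of $M$, so it never engulfs a full-dimensional subset. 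The same obstruction reappears at the next stage (for $r=3$ the set $\sigma_2\cap\sigma_3$ that $C_2$ must contain \emph{is} the manifold $\sigma_2\cap\sigma_r$ in which you would be engulfing), and making $\sigma_2\cap\cdots\cap\sigma_r$ highly connected by surgery does not make it collapsible. Moreover, the surgeries of Section~\ref{sec_increasing_the_connectivity} control $\pi_*(\sigma_i\cap\sigma_r)$, not the homotopy type of the $(r-1)$-fold intersection, and you would additionally have to check that they do not recreate points of $\sigma_1\cap\cdots\cap\sigma_r$, which must stay empty for Part~\ref{lem_whitney_part_II}.

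The underlying misstep is importing the machinery of Part~\ref{lem_whitney_part_I}, where the reduction ball must \emph{retain the entire} $r$-fold intersection (that is what has to be removed), which forces the surgery-plus-engulfing preparation. Here nothing has to be retained: $\sigma_1\cap\cdots\cap\sigma_r=\emptyset$ already, and one only needs \emph{some} spot where $\sigma_1\cap\sigma_r$ and $\sigma_2\cap\cdots\cap\sigma_r$ are both present in order to create the prescribed linking. Accordingly, the paper simply picks $x\in\sigma_1\cap\sigma_r$ and $y\in\sigma_2\cap\cdots\cap\sigma_r$, joins them by a general-position path $\lambda\subseteq\sigma_r$, and restricts to a regular neighborhood of $\lambda$ in $B^d$: by general position $\lambda$ meets $\sigma_1\cap\sigma_r$ only at $x$, and meets each $\sigma_i\cap\sigma_r$ ($i\ge2$) and $\sigma_2\cap\cdots\cap\sigma_r$ only at $y$, so the regular neighborhood meets each of these sets in a regular neighborhood of a point, hence in a ball, and the required emptiness/nonemptiness conditions survive the restriction. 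Your argument becomes correct (and coincides with the paper's) once you drop the containment requirement on $C^{*}$ and take $C^{*}=\{y\}$, $C_1=\{x\}$, and $C=\lambda$; no surgery or engulfing is needed.
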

\begin{proof}
Let us pick $x \in \sigma_1 \cap \sigma_r$ and $y \in \sigma_2\cap \cdots\cap \sigma_r$, that we join by a path $\lambda \subseteq \sigma_r$ in general position. We take a regular neighborhood of $\lambda$ in $B^d$, and restrict ourselves to this neighborood. 
\end{proof}

By the induction hypothesis applied on 
\[
\sigma_1 \cap \sigma_r, ..., \sigma_{r-1} \cap \sigma_r \subseteq \sigma_r,
\]
for every homotopy class $\alpha \in \pi_{s + (r-2) s_r - (r-1)d+1}(S^{(r-2) s_r -1})$, there exists an isotopy $J_t$ of $\sigma_r$ such that $J_t$ applied to the ball $\sigma_1 \cap \sigma_r \subseteq \sigma_r$ represents $\alpha$. 

The isotopy $J_t$ can be extended to $B^d$ (we still denote it by $J_t$), hence this isotopy applied to the ball $\sigma_1 \subseteq B^d$ represents an homotopy class $\beta \in \pi_{s} (S^{d(r-1)-1})$. We are done if we can show that $\beta$ is a suspension of $\alpha$ (we are in the stable range of the suspension isomorphism).

The problem is similar to Lemma~\ref{lem_commuting_square}. 
Indeed we have $r$ balls 
\[
J(\sigma_1\times [-1,1]), \sigma_2, ..., \sigma_r
\]
that are mapped into $B^d$, and we would like to form a diagram as in \eqref{eq_diam_simple} with the ball `$\sigma_1 \times [-1,1]$' instead of $\sigma_1$.

Hence, to conclude, we only need to prove a version of the Suspended Map Lemma~\ref{app_suspended_maps} for our present situation.

Note that $\sigma_1 \times [-1,1]$ is not embedded inside $B^d$, and is not even properly mapped (the boundary is not mapped to the boundary).

\begin{claim}
Let $\t \sigma_1$ be a $(s_1 +1)$-ball mapped into $B^d$ with its boundary mapped as follows: $\boundary \t \sigma_1$ is decomposed into two balls $B_1$ and $B_2$ such that $B_1$ is mapped onto $J_0 \sigma_1$ and $B_2$ is mapped onto $J_1 \sigma_1$ (and $\boundary B_1 = \boundary B_2$ mapped onto $J_0 \boundary \sigma_1 = J_1\boundary \sigma_1$), then 
\[
J(\sigma_1 \times[-1,1]) \times \sigma_2 \times \cdots \times \sigma_r \quad \mbox{and} \quad \t \sigma_1 \times \sigma_2 \times \cdots \times \sigma_r
\]
define the same element $\beta \in \pi_{s} (S^{d(r-1)-1})$.
\end{claim}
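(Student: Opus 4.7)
The plan is to interpret both $J(\sigma_1\times[-1,1])$ and $\tilde\sigma_1$ as PL maps of an abstract $(s_1{+}1)$-ball $D$ into $B^d$ which agree on $\partial D$, then build a homotopy between these two fillings that extends (after taking products with $\sigma_2,\ldots,\sigma_r$) to a homotopy of Gauss maps avoiding the thin diagonal. Any such homotopy shows the two boundary Gauss maps represent the same class in $\pi_s(S^{d(r-1)-1})$.

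First I would identify $D = \sigma_1\times[-1,1]$ with the abstract $(s_1{+}1)$-ball whose boundary is written as $B_1\cup B_2$ with $B_1\cap B_2=\partial B_1=\partial B_2$. Under $J(\sigma_1\times[-1,1])$, $B_1$ maps to $J_0\sigma_1$, $B_2$ maps to $J_1\sigma_1$, and $B_1\cap B_2$ maps to $\partial\sigma_1\subset\partial B$ (using that the ambient isotopy $J_t$ is proper, hence identity on $\partial B$). Under $\tilde\sigma_1$, by hypothesis, exactly the same boundary behaviour holds. Since $B^d$ is a PL ball and hence contractible, the two maps $D\to B^d$ are homotopic relative to $\partial D$; by standard PL general position one can choose the homotopy $f_t\colon D\to B^d$ so that its track $F\colon D\times I\to B^d$ is PL, with $F^{-1}(\partial B)$ equal to the "equator'' $(B_1\cap B_2)\times I$ (where the homotopy is already the constant map to $\partial\sigma_1$).

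Next I would consider the induced homotopy of Gauss maps
\[
\Phi_t\colon \boundary(D\times\sigma_2\times\cdots\times\sigma_r)\longrightarrow (B^d)^r,\qquad \Phi_t(x,y_2,\ldots,y_r)=(f_t(x),y_2,\ldots,y_r).
\]
On the face $\boundary D\times \sigma_2\times\cdots\times\sigma_r$, $\Phi_t$ does not depend on $t$ and lies outside $\delta_r(B^d)$ by the original hypothesis (both endpoints share this boundary). On $D\times\boundary(\sigma_2\times\cdots\times\sigma_r)$ some $y_i$ lies in $\partial\sigma_i\subset \partial B$, so a hit of $\delta_r(B^d)$ would force $f_t(x)\in\partial B$; by the arrangement of $F$ this only happens on $(B_1\cap B_2)\times I$, where $f_t(x)=x\in\partial\sigma_1$ is independent of $t$, reducing the question back to the boundary face already handled. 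The remaining concern is that for $x$ in the interior of $D$ and some $y_i$ in the interior of $\sigma_i$ one could still have $f_t(x)=y_j$ for all $j$; a dimension count shows the relevant intersection locus has codimension at least $1$ in $D\times I\times \boundary(\sigma_2\times\cdots\times\sigma_r)$ in the $r$-metastable range~\eqref{eq_metastable}, so a further generic perturbation of $F$ rel boundary removes it.

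With $\Phi_t$ landing in $(B^d)^r\setminus\delta_r(B^d)\homeq S^{d(r-1)-1}$ for all $t\in[0,1]$, the boundary Gauss maps at $t=0$ and $t=1$ represent the same element of $\pi_s(S^{d(r-1)-1})$, giving the claim. The main technical obstacle is the last dimension-counting step: one must verify that in the $r$-metastable range the track $F(D\times I)$ can be made transverse to $\sigma_2\cap\cdots\cap\sigma_r$ with empty intersection, so that pushing the homotopy slightly off any accidental coincidence is possible without disturbing the boundary conditions. This is analogous to, and no harder than, the general-position arguments already used in the proofs of Lemmas~\ref{lem_reduction} and~\ref{lem_whitney_trick}.
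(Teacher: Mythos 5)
Your core argument is the same as the paper's: the paper disposes of this claim in one line by taking the straight-line homotopy between the two fillings of the abstract $(s_1+1)$-ball $D$ (they agree on $\partial D$, and $B^d$ is convex), and your second and third paragraphs are a correct fleshing-out of why the induced homotopy of Gauss maps on $\partial(D\times\sigma_2\times\cdots\times\sigma_r)$ stays off the thin diagonal: on $\partial D\times\sigma_2\times\cdots\times\sigma_r$ the homotopy is constant in $t$, and on $D\times\partial(\sigma_2\times\cdots\times\sigma_r)$ a diagonal point forces all coordinates to equal some $y_j\in\partial\sigma_j\subset\partial B$, hence forces $f_t(x)\in\partial B$, which by your arrangement happens only on the equator $\partial B_1=\partial B_2$ where the homotopy is constant and avoidance holds by hypothesis. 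At that point you are done: a configuration with $x\in\mathrm{int}(D)$ and \emph{every} $y_i\in\mathrm{int}(\sigma_i)$ is not on $\partial(D\times\sigma_2\times\cdots\times\sigma_r)$ at all, so your ``remaining concern'' is vacuous.

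The problem is the resolution you offer for that non-issue, which is wrong as stated and should be deleted. First, ``codimension at least $1$'' is not enough to remove a locus by perturbation; you would need negative expected dimension. Second, and more seriously, the track $F(D\times I)$ is $(s_1+2)$-dimensional and $\sigma_2\cap\cdots\cap\sigma_r$ is $(s_2+\cdots+s_r-(r-2)d)$-dimensional, so their generic intersection has dimension $s+2-(r-1)d$, which is \emph{not} negative throughout the $r$-metastable range (e.g.\ $r=3$, $s_1=s_2=s_3=m=6$, $d=9$ satisfies \eqref{eq_metastable} and gives generic intersection dimension $2$). One cannot make the track miss $\sigma_2\cap\cdots\cap\sigma_r$, and one must not want to: the entire point of Part~\ref{lem_whitney_part_II} of Lemma~\ref{lem_whitney_trick} is that the track of $J_t(\sigma_1)$ sweeps through $\sigma_2\cap\cdots\cap\sigma_r$ so as to realize an arbitrary prescribed class $z$; if the track could always be emptied off $\sigma_2\cap\cdots\cap\sigma_r$, the class $\beta$ would always vanish and the construction would be pointless. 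So: keep your paragraph-three argument, which is exactly the paper's (implicit) justification of the straight-line homotopy, and discard the final general-position step.
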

\begin{proof}
This is immediate by using a straight line homotopy between $J(\sigma_1 \times [-1,1])$ and $\t \sigma_1$
\end{proof}
So we are reduced with working with a $(s_1+1)$-ball $\t \sigma_1$ instead of $J(\sigma_1\times [-1,1])$, and the way that we `fill' this ball does not matter (only the boundary decides of the homotopy class $\beta$).

We can decompose $\boundary \t \sigma_1$ as two balls $B_1$ and $B_2$ both homeomorphic with $(\sigma_1 \cap \sigma_r) * S^{d-s_r-1}$, and with $B_1 \cap B_2 \homeq \boundary (\sigma_1 \cap \sigma_r) * S^{d-s-r}$.

\begin{claim}
We can assume that $B_1$ is mapped to $(\sigma_1 \cap \sigma_r) *S^{d-s_r-1}$ and that $B_2$ is mapped to $(J_1 \sigma_1 \cap \sigma_r) * S^{d-s_r-1}$.
\end{claim}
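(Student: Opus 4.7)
The plan is to mirror the proof of the Suspended Maps Lemma~\ref{app_suspended_maps}, now applied to $B_1$ and $B_2$ in place of a single ball $\sigma_i$. The key ingredients are already in hand: by the previous claim, both $\sigma_1 \cap \sigma_r$ and $J_1 \sigma_1 \cap \sigma_r$ are balls properly contained in $\sigma_r$, and by hypothesis $\sigma_r$ is unknotted in $B^d$, giving $B^d = \sigma_r * S^{d-s_r-1}$. Zeeman's unknotting in the metastable range then lets us write $\sigma_r = (\sigma_1 \cap \sigma_r) * S^{d-s_1-1}$, so that
\[
B^d \;=\; (\sigma_1 \cap \sigma_r) * S^{d-s_1-1} * S^{d-s_r-1},
\]
and inside this triple join sits the standard suspended copy $(\sigma_1 \cap \sigma_r) * S^{d-s_r-1}$ of an $s_1$-ball.

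First I would handle $B_1$. By construction the restriction $B_1 \to \widetilde\sigma_1 \to B^d$ factors through the inclusion $J_0 \sigma_1 = \sigma_1 \hookrightarrow B^d$, which is an $s_1$-ball meeting $\sigma_r$ in $\sigma_1 \cap \sigma_r$ and whose boundary lies in $\partial B^d$. Exactly as in Lemma~\ref{app_suspended_maps}, the retraction
\[
B^d \setminus \bigl(\emptyset * S^{d-s_1-1} * \emptyset\bigr)\;\longrightarrow\; (\sigma_1 \cap \sigma_r) * \emptyset * S^{d-s_r-1}
\]
pushes $\sigma_1$ inside $(\sigma_1 \cap \sigma_r) * S^{d-s_r-1}$; the resulting straight-line homotopy is rel $\partial \sigma_1$ and rel $\sigma_1 \cap \sigma_r$, so precomposing with $B_1 \to \sigma_1$ achieves the desired form. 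Since $\partial \sigma_1$ is a generator of $\pi_{s_1-1}\bigl(\partial\bigl((\sigma_1 \cap \sigma_r) * S^{d-s_r-1}\bigr)\bigr) \cong \mathbb{Z}$ because $\sigma_1 \cap \sigma_r$ meets the standard co-core sphere transversely in a single point, there is no obstruction to carrying out this homotopy. An identical argument, with $J_1\sigma_1$ and $J_1\sigma_1 \cap \sigma_r$ in place of $\sigma_1$ and $\sigma_1 \cap \sigma_r$, takes care of $B_2$.

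The main obstacle is to ensure that these modifications do not alter the homotopy class $\beta \in \pi_s(S^{d(r-1)-1})$ represented by $\widetilde\sigma_1 \times \sigma_2 \times \cdots \times \sigma_r$. This amounts to verifying that the straight-line homotopies remain in $(B^d)^r \setminus \delta_r(B^d)$, i.e.\ do not create new global $r$-fold intersections with $\sigma_2, \ldots, \sigma_r$. Since each homotopy is performed within the join decomposition of $B^d$ determined by $\sigma_r$ and $\sigma_1 \cap \sigma_r$, a standard general-position argument applies provided the dimension estimate
\[
(s_1 + 1) + s_2 + \cdots + s_r < r d
\]
holds, which is implied by the $r$-metastable hypothesis~\eqref{eq_metastable}. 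Once this is checked, $\beta$ is unchanged by the replacement and the map $B_1 \cup B_2 \to B^d$ has the required suspended form, enabling the suspension diagram of Lemma~\ref{lem_commuting_square} to identify $\beta$ as $\Sigma^L \alpha$ and complete the inductive step.
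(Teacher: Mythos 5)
Your proposal is correct and takes essentially the same route as the paper: the paper's entire proof of this claim is the remark that it ``follows by an argument identical to that of Lemma~\ref{app_suspended_maps}, working with the two balls separately,'' which is precisely what you carry out for $B_1$ (over $J_0\sigma_1=\sigma_1$) and $B_2$ (over $J_1\sigma_1$). The extra details you supply --- the join decomposition $B^d=(\sigma_1\cap\sigma_r)*S^{d-s_1-1}*S^{d-s_r-1}$, the retraction, the generator argument, and the general-position check that $\beta$ is preserved --- are faithful to the proof of Lemma~\ref{app_suspended_maps} and fill in what the paper leaves implicit.
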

\begin{proof}
This follows by an argument identical to that of Lemma~\ref{app_suspended_maps} (we work with the two balls seperately).
\end{proof}

\begin{claim}
We can assume that $B_1 = (\sigma_1 \cap \sigma_r) * S^{d-s_r-1}$ and $B_2 = (J_1 \sigma_1 \cap \sigma_r) * S^{d-s_r-1}$ are mapped onto the boundary of $B^d$, and that $\t \sigma_1 = ( \t \sigma_1 \cap \sigma_r) * S^{d-s_r-1}$.
\end{claim}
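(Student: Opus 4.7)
The plan is to arrange the map on $\t\sigma_1$ so that it respects the join decomposition $B^d = \sigma_r * S^{d-s_r-1}$ throughout, not only on its boundary. Throughout, I write $S := S^{d-s_r-1}$ for the sphere that sits in $\partial B^d$ via $\partial B^d = \partial\sigma_r * S$.

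First I would observe that by the previous claim, each of $B_1, B_2$ is already of the form $(\sigma \cap \sigma_r) * S$, where the sphere factor coincides (up to homeomorphism) with the sphere $S$ of $B^d = \sigma_r * S$. In particular, under the map $\t\sigma_1 \to B^d$ the $S$-factor of each $B_i$ is already sent into $\partial B^d$. Using Zeeman's Unknotting of Spheres and the uniqueness of regular neighbourhoods, I can straighten these maps so that each $S$-factor goes by a PL-homeomorphism onto the ambient $S \subseteq \partial B^d$; this handles the first half of the claim.

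Next I would promote the join structure from $\partial\t\sigma_1 = B_1 \cup B_2$ to the whole of $\t\sigma_1$. Set $C := \t\sigma_1 \cap \sigma_r$. By general position $C$ is a properly embedded ball in $\sigma_r$ of dimension $s_1 + 1 + s_r - d$, with $\partial C = (\sigma_1 \cap \sigma_r) \cup (J_1\sigma_1 \cap \sigma_r)$. The ``standard'' suspended ball $C * S \subseteq \sigma_r * S = B^d$ is then a ball of dimension $s_1 + 1$ whose boundary is already exactly the image of $B_1 \cup B_2$ after the first step. I would then invoke Hudson's Unknotting Theorem Moving the Boundary (Theorem~\ref{thm_Hudson_unknotting_moving_boundary}) — or, equivalently, Zeeman's unknotting of balls relative to a fixed boundary — to produce a proper ambient PL-isotopy of $B^d$, fixed on a neighbourhood of $\partial\t\sigma_1$ and on $\partial B^d$, that carries $\t\sigma_1$ onto $C * S$. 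Existence of such an isotopy reduces to the fact that the two embeddings $\t\sigma_1 \hookrightarrow B^d$ and $C * S \hookrightarrow B^d$ are homotopic rel boundary (obtained by a straight-line homotopy in the join coordinate from $\sigma_r * S$, followed by small general-position adjustment), together with the codimension and connectivity hypotheses of Theorem~\ref{thm_Hudson_unknotting_moving_boundary}.

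The main obstacle will be verifying those connectivity hypotheses for the pair $(\t\sigma_1, \partial\t\sigma_1)$ inside $(B^d, \partial B^d)$ in our setting (where $\dim \t\sigma_1 = s_1 + 1$). This is a direct computation of exactly the same flavour as in the proof of Corollary~\ref{cor_hudson_unknotting_ribbons}: one writes down the long exact sequence of the pair, identifies the relevant relative groups, and checks that they vanish in the range $rd \ge (r+1)m + 3$, $d - m \ge 3$. The codimension $d - (s_1+1) \ge 2$ suffices for the $q - m \ge 3$ hypothesis after using $r \ge 3$, and the connectivity bounds reduce (as in Section~\ref{sec_increasing_the_connectivity}) to inequalities that follow from $\tfrac{r+1}{r}m < d$. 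Once this unknotting is obtained we have $\t\sigma_1 = C * S = (\t\sigma_1 \cap \sigma_r) * S^{d-s_r-1}$ as required, and the claim follows.
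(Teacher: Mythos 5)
Your proposal diverges from what the claim is actually asserting, and the route you take would not go through. The key content of the claim is that the \emph{whole} of $B_1\cup B_2=\partial\t\sigma_1$ gets pushed onto $\partial B^d$, so that $\t\sigma_1$ becomes a \emph{properly} mapped ball and the machinery of Lemma~\ref{lem_commuting_square} (which is stated for proper maps) becomes applicable; you only straighten the $S^{d-s_r-1}$-factors, which leaves the interiors of $B_1=J_0\sigma_1$ and $B_2=J_1\sigma_1$ in the interior of $B^d$ and does not achieve this. Moreover, your main tool, Hudson's Unknotting Theorem Moving the Boundary, is a statement about proper PL \emph{embeddings}, and it is inapplicable here on two counts: $\t\sigma_1$ is not properly mapped (that is exactly what we are trying to arrange), and it need not be an embedding at all --- the preceding claim deliberately replaced the isotopy trace $J(\sigma_1\times[-1,1])$ by an arbitrary singular $(s_1+1)$-ball, since only the homotopy class of the boundary Gauss map matters. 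For the same reason your assertion that $C=\t\sigma_1\cap\sigma_r$ is a properly embedded ball in $\sigma_r$ is unjustified.

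The paper's argument is much softer and makes no attempt to preserve embeddedness: pick a point $x$ in the interior of the ball $\sigma_2\cap\cdots\cap\sigma_r$ (which $B_1$ and $B_2$ avoid, since $\sigma_1\cap\cdots\cap\sigma_r=\emptyset$ and $J_1\sigma_1\cap\sigma_2\cap\cdots\cap\sigma_r=\emptyset$), take the radial retraction of $\sigma_r\setminus x$ onto $\partial\sigma_r$ adapted to the unknotted ball $\sigma_2\cap\cdots\cap\sigma_r$, and extend it to a retraction $B^d\setminus x\to\partial B^d$ using $B^d=\sigma_r*S^{d-s_r-1}$. Composing the map on $\t\sigma_1$ with this retraction simultaneously pushes $B_1,B_2$ into $\partial B^d$ and imposes the desired join structure. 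The price is that $B_1$ and $B_2$ cease to be embedded, but the paper notes explicitly that this is harmless because only the homotopy class $\beta$ of the boundary map is used afterwards. No unknotting theorem and no connectivity computation are needed for this step.
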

\begin{proof}
Figure~\ref{fig_retraction_end_induction} illustrate the construction inside $\sigma_r$.
\begin{figure}
\begin{center}
\includegraphics[scale=1]{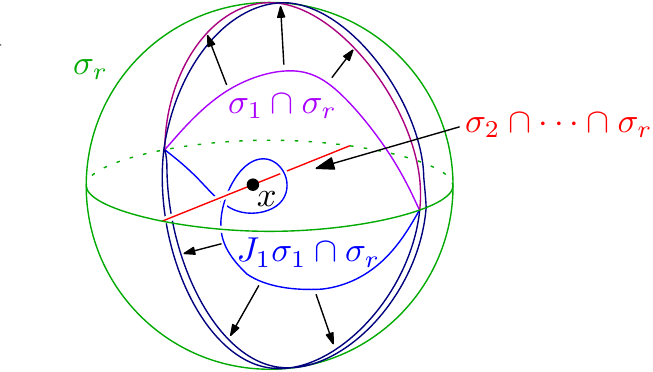}
\caption{Retraction of the sphere $(B_1 \cap \sigma_r) \cup (B_2\cap \sigma_r)$ on the boundary of $\sigma_r$.}
\label{fig_retraction_end_induction}
\end{center}
\end{figure}

We pick a point $x $ in the interior of the ball $ \sigma_2\cap \cdots\cap \sigma_r$. Since this ball unknots in $\sigma_r$, there exists a retraction $r_t$ of  $\sigma_r \setminus x$ to $\boundary \sigma_r$ such that $r_1^{-1}$ $\boundary(\sigma_2\cap \cdots\cap \sigma_r) = \sigma_2\cap \cdots\cap \sigma_r$.

Using that $B^d = \sigma_r * S^{d-s_r-1}$, we extend $r_t$ to $B_d$ (which now retract $B^d \setminus x$ to $\boundary B^d$). We can then use $r_t$ to conclude. (Note that $B_1$ and $B_2$ stop to be embedded, but this is not a problem for us). 
\end{proof}

We can now apply Lemma~\ref{lem_commuting_square} to the balls $\t\sigma_1, \sigma_2 ,...,\sigma_r$, and thus conclude. 
\end{proof}

\clearpage 
\appendix
\section{Block Bundles}
\label{chap_block_bundles}

In this Appendix, we review the notions of block bundles that are used in Section~\ref{sec_increasing_the_connectivity}. The theory of Block Bundles was developped by Rourke and Sanderson in a series of three papers \cite{BBI,BBII,BBIII} (see also \cite[p.~236]{Bryant:Piecewise-linear-topology-2002}, \cite{rourke1966block, 
buonchristiano1976geometric}, and Casson's chapter in \cite{ranicki1997hauptvermutung}). The goal of this theory is to \emph{establish for the piecewise-linear category a tool analogous to the vector bundle in the differential categroy}. \cite[p.~1]{BBI}.

\begin{definition}[PL cell complex]
A \define{complex} $K$ is a finite collection of cells which cover a polyhedron $X$ satisfying: For all $\sigma, \tau \in K$ distinct
\begin{enumerate}[(1)]
\item$\boundary \sigma$ and $\sigma\cap \tau$ are unions of cells of $K$,
\item $\interior \sigma\cap \interior \tau = \emptyset$.
\end{enumerate}
\end{definition}

\begin{remarks}
\begin{enumerate}[(a)]
\item We write $|K| := X$, when $K$ is a complex associate to a polyhedron $X$.
\item We often identified $\sigma\in K$ with the subcomplex of $K$ that it determines.
\end{enumerate}
\end{remarks}

\begin{definition}[Product of Complexes]
Let $K$ and $L$ be two complexes. The collection of cells 
\[
K \times L := \{ \sigma\times \tau \; | \; \sigma\in K \mbox{ and } \tau \in L\}
\]
is also a complex, whose geometric realization is homeomorphic to $|K| \times |L|$.
\end{definition}

\begin{definition}[Subdivision]
Let $K$ and $ K'$ be two cell complex structures of a polyhedron $X$. We say that $ K'$ is a \define{subdivision} of $K$ if 
\[
\mbox{for all }\sigma'\in K',\mbox{ there exists }\sigma\in K\mbox{ with }\sigma'\subseteq \sigma.
\]
\end{definition}

%
%

\begin{definition}[Block Bundle]
A \define{$q$-block bundle} $\xi^q$ is a pair $(E,K)$ of cell complexes such that 
\begin{enumerate}[(1)]
\item $|K| \subset |E|$
\item For each $n$-cell $\sigma\in K$, there exists an $(n+q)$-ball $\beta \in E$ such that
\[
(\beta,  \sigma ) \iso (I^{n+q},I^n).
\]
We call $\beta$ the \define{block} over $\sigma$.
\item $E$ is the union of the blocks $\beta$. 
\item The interiors of the blocks are disjoints.
\item Let $\beta,\gamma\in E$ be the block over $\sigma,\tau\in K$, respectively. Let $L= \sigma \cap \tau \subset K$, then $\beta \cap \gamma \subset E$ is the union of the blocks over cells of $L$.
\end{enumerate}
\end{definition}

\begin{remarks}
\begin{enumerate}[(a)]
\item For a $q$-block bundle $\xi^q = (E,K)$, we denote $E(\xi) := E$, and the whole bundle data is denoted by 
\[
\xi^q / K.
\]
Sometimes, we simply write $\xi / X$, where $X= |K|$.
\item The \define{trivial $q$-block bundle} over $K$ is denoted by $\epsilon^q /K$. For any $\sigma_i\in K$
\[
\beta_i := \sigma_i \times [-1,1]^q.
\]
\end{enumerate}
\end{remarks}

%

\begin{definition}[Isomorphism of Block Bundles]
Two block bundles $\xi,\eta /K$ are \define{isomorphic} (written $\xi \iso \eta$) if there exists a homeomorphism
\[
h : E(\xi) \to E(\eta),
\]
such that
\begin{enumerate}[(1)]
\item its restriction to $K$ is the identity ($h_K=1$), and
\item Let $\sigma \in K$ with $\beta \in E(\xi)$ and $\gamma\in E(\eta)$ the two blocks over $\sigma$, then
\[
h(\beta) = \gamma.
\]
\end{enumerate}
\end{definition}

\begin{definition}[Subdivision]
Let $\eta /K$ be a block bundle, and let $K'$ be a subdivision of $K$. A block bundle $\eta'/K'$ is a \define{subdivision} of $\eta/K$ if for all $\sigma \in K$ (with associated block $\beta$), such that 
\[
|\sigma| = |\interior \tau_1| \sqcup \cdots \sqcup |\interior \tau_n|
\]
where $\tau_1,\ldots, \tau_n \in K' $, then
\[
|\beta| = |\interior \mu_1| \sqcup \cdots \sqcup |\interior \mu_n|
\]
where $\mu_i$ is the block over $\tau_i$ in $\xi'/K'$. 
\end{definition}

\begin{definition}[Equivalence of Block Bundles]
Let $\xi/K$ and  $\eta/L$ be two block bundles such that $|K| = |L|$. We say that $\xi/K$ is \define{equivalent} to $\eta/L$ if there exists subdivisons $\xi'/K'$ and $\eta'/L'$ such that $\xi' \iso \eta'$.
\end{definition}

\begin{definition}
Let $K$ be a polyhedron.
\begin{enumerate}[(1)]
\item 
The set of isomorphism classes of $q$-block bundles over $K$ is denoted $I_q(K)$.
\item 
The set of equivalence classes of $q$-block bundles over $|K|$ is denoted $I_q(|K|)$.
\end{enumerate}
\end{definition}

\begin{theorem}[Thm~1.10 \cite{BBI}]
Let $K$ be a polyhedron. Then there is a bijection
\[
I_q(K) \to I_q(|K|),
\] 
defined by associating each block bundle over $K$ to its equivalence class.
\end{theorem}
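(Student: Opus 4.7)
The plan is to prove the stated bijection by establishing surjectivity and injectivity of the natural map $I_q(K) \to I_q(|K|)$ sending a block bundle over $K$ to its equivalence class over $|K|$.

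For injectivity, I would start with two block bundles $\xi/K$ and $\eta/K$ that become equivalent over $|K|$. Unpacking the definition, this gives subdivisions $\xi'/K'$ of $\xi$ and $\eta'/K'$ of $\eta$ over a common subdivision $K'$ of $K$, together with a block-bundle isomorphism $h\colon E(\xi') \to E(\eta')$. By the subdivision axiom, for each cell $\sigma \in K$ the block $\beta_\sigma^\xi$ equals the union of the $\xi'$-blocks over those $\tau \in K'$ with $|\tau| \subseteq |\sigma|$, and similarly on the $\eta$-side. Since $h$ fixes $|K'|=|K|$ pointwise and carries the $\xi'$-block over each such $\tau$ to the $\eta'$-block over the same $\tau$, it restricts to a homeomorphism $\beta_\sigma^\xi \to \beta_\sigma^\eta$ for every $\sigma \in K$. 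Hence $h$ itself is already an isomorphism $\xi \to \eta$ of block bundles over $K$.

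For surjectivity, given any $q$-block bundle $\zeta/L$ with $|L|=|K|$, I would choose a common subdivision $M$ of $K$ and $L$, subdivide $\zeta$ to $\zeta'/M$, and then ``coarsen'' $\zeta'$ back to a block bundle $\xi/K$. The natural candidate sets $E(\xi) := E(\zeta')$ and defines the block over each $\sigma \in K$ by
\[
\beta_\sigma := \bigcup_{\tau \in M,\; |\tau| \subseteq |\sigma|} \mu_\tau^{\zeta'}.
\]
Disjointness of interiors and the intersection axiom for $\xi/K$ would then follow routinely from the corresponding axioms for $\zeta'/M$ together with the compatibility of the face relations in $M$ with those in $K$. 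By construction, $\zeta'/M$ would then be a subdivision of $\xi/K$, witnessing the equivalence of $\xi/K$ with $\zeta/L$ over $|K|$.

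The technical crux, and the step I expect to be the main obstacle, is verifying the block axiom $(\beta_\sigma, \sigma) \iso (I^{n+q}, I^n)$ where $n = \dim \sigma$. The polyhedron $\beta_\sigma$ is naturally a regular neighborhood of $\sigma$ inside $E(\zeta')$, assembled from the mapping-cylinder-like neighborhoods supplied by the $\zeta'$-blocks meeting $\sigma$; since $\sigma$ is collapsible, the Regular Neighborhood Theorem then forces $\beta_\sigma$ to be an $(n+q)$-ball. Upgrading this to the \emph{standard pair} $(I^{n+q}, I^n)$ requires an inductive argument on $\dim \sigma$: assuming trivializations over $\partial \sigma$ have been assembled from the blocks over proper faces, one extends to $\beta_\sigma$ by coning, using uniqueness of regular neighborhoods of properly embedded PL balls (Zeeman's unknotting, applied relative to the boundary) to ensure the extension is standard. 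Once this inductive step is in place, $\xi/K$ is a genuine block bundle and the proof is complete.
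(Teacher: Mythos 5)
First, a point of reference: the paper does not prove this statement at all --- it is quoted verbatim from Rourke--Sanderson \cite{BBI} as background for Appendix~\ref{chap_block_bundles}, so there is no in-paper proof to compare against. Your outline does follow the structure of the original argument (injectivity by restricting an isomorphism of subdivisions to the amalgamated blocks; surjectivity by passing to a common subdivision and amalgamating), and the injectivity half is essentially complete: an isomorphism of subdivided bundles fixes $|K|$ and carries the union of the $\xi'$-blocks over cells of $K'$ inside $\sigma$ to the corresponding union on the $\eta$-side, which by the definition of subdivision is exactly $\beta_\sigma^\xi\to\beta_\sigma^\eta$.

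The surjectivity half, however, has two genuine gaps. First, you ``subdivide $\zeta$ to $\zeta'/M$'' as if this were automatic, but the existence (and uniqueness) of a subdivision of a block bundle over an arbitrary subdivision of its base is itself one of the main theorems of \cite{BBI}, comparable in difficulty to amalgamation; it cannot be assumed. Second, and more seriously, your verification of the block axiom for the amalgamated block leans on Zeeman unknotting and ``uniqueness of regular neighborhoods of properly embedded PL balls'' to upgrade the ball $\beta_\sigma$ to a standard pair $(I^{n+q},I^n)$. Unknotting of ball pairs requires codimension at least $3$, whereas block bundles are defined for all $q$ --- indeed their whole purpose is to avoid codimension restrictions --- and in codimension $2$ there exist knotted proper ball pairs, so ``$\beta_\sigma$ is a ball containing $\sigma$ properly'' does not imply the pair is standard. (Relatedly, $\beta_\sigma$ is not a regular neighborhood of $\sigma$ \emph{in} $E(\zeta')$, since near $\partial\sigma$ it omits the blocks over cells not contained in $\sigma$; the correct assertion, which itself needs an inductive argument using the standard-pair structure of the $\zeta'$-blocks, is that $\beta_\sigma$ collapses to $\sigma$ and is therefore an abstract regular neighborhood, hence a ball.) The actual proof in \cite{BBI} handles the standard-pair step by a dimension induction that exploits the already-standardized block structure over $\partial\sigma$ and cone/flattening lemmas specific to block bundles, rather than general ball-pair unknotting; without some substitute for that, your argument does not close.
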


We have the following analogue of the `tubular neighborhood Theorem' for block bundles:
\begin{theorem}[Sec.~4 of \mbox{\cite{BBI}}  or
Thm~3.27 in \mbox{\cite{Bryant:Piecewise-linear-topology-2002}}]
\label{thm_corr_regular_neighborhood_bb}
Suppose $(M,Q)$ is a $(m,q)$-manifold pair with $m-q \ge 3$. Every regular neighborhood of $Q$ in $M$ has the structure of an $(m-q)$-block bundle over $Q$.
\end{theorem}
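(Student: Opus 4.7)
The plan is to construct the block bundle structure directly from a triangulation of the pair $(M,Q)$ and then to transport it to an arbitrary regular neighborhood by the uniqueness of regular neighborhoods.

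First, I would fix a triangulation $T$ of $M$ in which $Q$ appears as a full subcomplex; this exists because $(M,Q)$ is a PL manifold pair. By the regular neighborhood uniqueness theorem, any two regular neighborhoods of $Q$ in $M$ differ by an ambient PL isotopy fixing $Q$, so it suffices to equip one specific regular neighborhood with a block bundle structure. A natural choice is the derived neighborhood $N := N(Q, T'')$ of $Q$ in the second barycentric subdivision $T''$.

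For each simplex $\sigma$ of the triangulation of $Q$ inherited from $T$, I would define the block $\beta(\sigma)$ to be the join $\sigma * D(\sigma)$, where $D(\sigma) \subset N$ is the standard dual cell of $\sigma$ in $T''$ transverse to $Q$; equivalently, $\beta(\sigma)$ is the union of those closed simplices of $T''$ whose carrier in $T$ has $\sigma$ as a face and which lie in $N$. The four conditions in the definition of a block bundle then need to be verified: (i) $(\beta(\sigma), \sigma)$ is PL homeomorphic to the standard pair $(I^m, I^{\dim\sigma})$; (ii) distinct blocks have disjoint interiors; (iii) if $\sigma\cap\tau$ is a face of both $\sigma$ and $\tau$, then $\beta(\sigma)\cap\beta(\tau)$ is the union of the blocks over the subfaces of $\sigma\cap\tau$; and (iv) the union of all blocks equals $N$. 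Items (ii)--(iv) are formal consequences of the combinatorics of $T''$ and the way derived neighborhoods are assembled.

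The main obstacle is (i). Because $(M,Q)$ is a PL manifold pair, the link $L_M := \mathrm{lk}(\sigma,T)$ is a PL sphere of dimension $m-\dim\sigma-1$ and contains $L_Q := \mathrm{lk}(\sigma,T|_Q)$ as a PL subsphere of dimension $q-\dim\sigma-1$. To conclude that the cone $\hat\sigma * L_M$ (whose subcone $\hat\sigma * L_Q$ is $\sigma$) is the standard ball pair $(I^m, I^{\dim\sigma})$, one must show that the PL sphere pair $(L_M, L_Q)$ is standard, i.e.\ PL homeomorphic to $(S^{m-\dim\sigma-1}, S^{q-\dim\sigma-1})$ with the canonical inclusion. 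This is exactly Zeeman's PL unknotting theorem for spheres, which is valid precisely when the codimension satisfies $(m-\dim\sigma-1) - (q-\dim\sigma-1) = m-q \ge 3$ --- and this is where the hypothesis of the theorem is used. Once (i) is established, the construction immediately yields a block bundle structure on $N$, which then transfers to any other regular neighborhood by the ambient isotopy connecting them.
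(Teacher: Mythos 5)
The paper does not actually prove Theorem~\ref{thm_corr_regular_neighborhood_bb}; it imports it from Rourke--Sanderson \cite{BBI} (existence of normal block bundles), so the comparison is with the standard argument there. Your high-level strategy agrees with it: reduce to a single derived neighbourhood via uniqueness of regular neighbourhoods, and trace the hypothesis $m-q\ge 3$ to Zeeman's unknotting of the sphere pair $\bigl(\mathrm{lk}(\sigma,T),\mathrm{lk}(\sigma,T|_Q)\bigr)$. That diagnosis of where the codimension assumption enters is exactly right.

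The genuine gap is the definition of the blocks, which does not satisfy the axioms of the paper's Definition of a block bundle. In an $(m-q)$-block bundle over $Q$, the block over an $n$-simplex $\sigma$ must be an $\bigl(n+(m-q)\bigr)$-ball; in particular the block over a vertex is an $(m-q)$-dimensional transverse slice. But ``the union of those closed simplices of $T''$ whose carrier in $T$ has $\sigma$ as a face and which lie in $N$'' is essentially $\overline{\mathrm{st}}(\sigma,T)\cap N$, which is $m$-dimensional for \emph{every} $\sigma$; worse, an $m$-simplex of $T''$ carried by a simplex of $T$ containing an edge $[v_0,v_1]$ of $Q$ lies in $\beta(v_0)$, $\beta(v_1)$ and $\beta([v_0,v_1])$ simultaneously, so the interiors of the blocks are not disjoint. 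Hence your conditions (ii)--(iv) are not ``formal consequences''---they fail for the decomposition as written---and the join description $\sigma * D(\sigma)$ is dimensionally inconsistent as well, since the dual cell of $\sigma$ has dimension $m-\dim\sigma$ rather than the $m-q-1$ that a join would require. Note also that your own cone $\hat\sigma * L_Q$ is not $\sigma$ but the dual cell of $\sigma$ in $Q$ (a $(q-\dim\sigma)$-ball), which points to the correct repair: build the bundle over the \emph{dual cell complex} of $Q$, putting the $(m-\dim\sigma)$-ball $\hat\sigma * L_M$ over the $(q-\dim\sigma)$-cell $\hat\sigma * L_Q$. These blocks do have disjoint interiors, their union is the derived neighbourhood, the dimensions match the definition of an $(m-q)$-block bundle, and the unknottedness of each pair follows from your link-pair argument by coning (using the ball-pair version of Zeeman unknotting). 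If one insists on having the original triangulation of $Q$ as base, one must then invoke the subdivision/amalgamation results of \cite{BBI}---which is precisely why block bundles over a polyhedron are defined only up to subdivision in Appendix~\ref{chap_block_bundles}.
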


\begin{definition}[Abstract Regular Neighborhood]
Suppose $M^m \subset Q^{q}$ is a proper submanifold of $Q$ (both compact). We say that $Q$ is an \define{abstract regular neighborhood} of $M$ is $Q$ collapses to $M$.
\end{definition}

\begin{theorem}[Cor.~4.6 in \cite{BBI}]
Let $M$ be a $m$-manifold, and $\mathfrak R_q(M)$ be the set of  homeomorphism ($\mathrm{mod}\; M$) classes of abstract regular neighborhoods of $M$ of dimension $m+q$.

There is a bijection
\[
I_q (M)  \to
\mathfrak R_q(M)
\] 
given by associating every block bundle to the corresponding abstract regular neighborhood, i.e., $\xi/M$ is sent to $E(\xi)$.
\end{theorem}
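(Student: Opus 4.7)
The plan is to construct the bijection explicitly by producing an inverse map and verifying both compositions using PL regular neighborhood theory.

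First, I would show that $\xi \mapsto E(\xi)$ is well-defined on isomorphism classes and lands in $\mathfrak R_q(M)$. Each block $\beta$ over an $n$-cell $\sigma$ satisfies $(\beta,\sigma) \iso (I^{n+q},I^n)$, so the standard collapse of the larger cube onto its face gives $\beta \collapse \sigma$. Ordering the cells of $M$ by decreasing dimension and collapsing the associated blocks successively --- the block intersection axiom (5) guarantees that partial collapses glue consistently along shared boundary faces --- yields a global collapse $E(\xi) \collapse M$, so $E(\xi)$ is an abstract regular neighborhood. Any block bundle isomorphism is in particular a homeomorphism rel $M$, so the assignment descends to the required equivalence classes.

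To construct the inverse, I would start with an abstract regular neighborhood $Q$ of $M$ and build a block bundle structure on $Q$ by induction over the skeleta of $M$, invoking the PL regular neighborhood theorem at each stage. Suppose blocks $\beta_\tau$ have been placed compatibly over all cells $\tau$ with $\dim \tau < n$. For an $n$-cell $\sigma$, the union of the $\beta_\tau$ over the proper faces of $\sigma$, together with $\sigma$ itself, is a compact collapsible subpolyhedron of $Q$; its regular neighborhood in $Q$ relative to the already-chosen boundary data is a PL $(n+q)$-ball meeting $M$ exactly in $\sigma$ and meeting each lower-dimensional block in the prescribed face. Setting $\beta_\sigma$ equal to this neighborhood and iterating (after a suitable subdivision of $Q$) exhausts $Q$ and produces the desired block bundle.

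Finally, I would verify that the two constructions are mutually inverse on equivalence classes by appealing to the uniqueness clause of the PL regular neighborhood theorem: rebuilding a block bundle from its total space yields an isomorphic bundle, and the inductive construction applied to $E(\xi)$ recovers $\xi$ by design. The main obstacle is the extension step in the surjectivity argument --- one must show that the already-chosen blocks on $\boundary \sigma$ extend to a single PL ball over $\sigma$ with the correct intersection properties with all the other blocks, coherently for every cell $\sigma$. This is the technical heart of Rourke--Sanderson block bundle theory and rests on the uniqueness of PL regular neighborhoods of collapsible subpolyhedra together with standard engulfing arguments.
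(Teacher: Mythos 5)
The paper does not prove this statement---it is quoted from Rourke--Sanderson \cite{BBI}---and your outline does follow the broad strategy of the original (collapse the blocks cell by cell to see $E(\xi)\collapse M$; build a block structure on a given neighbourhood by induction over the cells of $M$; invoke uniqueness of regular neighbourhoods). However, the inductive step of your surjectivity argument fails as written. You define $\beta_\sigma$ to be a regular neighbourhood in $Q$ of $\sigma\cup\bigcup_{\tau<\sigma}\beta_\tau$. A regular neighbourhood of that polyhedron contains the previously built blocks $\beta_\tau$ in its manifold interior, whereas the block bundle axioms force $\beta_\tau\subset\partial\beta_\sigma$ for $\tau$ a proper face of $\sigma$ (and force distinct blocks to have disjoint interiors); moreover two $n$-cells sharing a face $\tau$ would acquire blocks overlapping in a full $(n+q)$-dimensional neighbourhood of $\beta_\tau$ rather than in $E(\xi|\bar\tau)$. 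The correct construction is essentially the opposite of yours: one takes the closure of the complement of the already-built blocks inside a regular neighbourhood of $\sigma$, and the technical heart is then to show that this closure together with (the corresponding part of) $\sigma$ is an \emph{unknotted} ball pair $(I^{n+q},I^n)$ meeting the previous stage in $E(\xi|\partial\sigma)$. That unknotting statement is not supplied by ``uniqueness of regular neighbourhoods of collapsible subpolyhedra'' alone.

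The second gap is injectivity. To have a bijection you must show that if $E(\xi)$ and $E(\eta)$ are homeomorphic $\mathrm{mod}\; M$ then $\xi$ and $\eta$ are isomorphic (equivalently, equivalent after subdivision). Your closing remark that the inductive construction applied to $E(\xi)$ ``recovers $\xi$ by design'' only shows that $\xi$ is \emph{one} possible output of that construction; it does not show that every block bundle structure on a given total space is equivalent to every other. That is the uniqueness theorem for (normal) block bundles (Thm.~4.4 of \cite{BBI}), which is a separate induction in which block-by-block isotopies are amalgamated, and without it your map $I_q(M)\to\mathfrak R_q(M)$ is only shown to be well defined and surjective.
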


\begin{definition}[Restriction]
Let $L,K$ be two complexes such that $L \subset K$. Given a block bundle $\xi/K$, we define its \define{restriction} to $L$ (notation: $\xi|L$) which is the block bundle over $L$ obtained from $\xi/K$ by restricting to the block over cells of $L$.

I.e., given $\sigma\in L$, then the block over $\sigma$ in $\xi|L$ is the block over $\sigma$ in $\xi/K$.
\end{definition}
\begin{remark}
Let $u/X$ be an equivalence class of block bundle over a polyhedron $X$. Let $Y \subset X$ be a closed subpolyhedron of $X$. Then the restriction $u | Y$ is a well-defined equivalence  class of block bundles over $Y$ (see \cite[p.~8]{BBI}). 
\end{remark}

\begin{definition}[Cartesian Product]
Suppose $\xi/K$ and $\eta /L$ are two block bundles. Their \define{cartesian product} $(\xi \times \eta) / (K \times L)$ is the block bundle over $K\times L$ defined by
\begin{enumerate}[(1)]
\item 
$E(\xi \times \eta) := E(\xi) \times E(\eta)$.
\item 
Given $\sigma \in K$, $\tau \in L$ with $\beta \in E(K)$ the block over $\sigma$ and $\gamma \in E(L)$ the block over $\tau$. Then the block over $\sigma\times \tau \in K \times L$ is $\beta \times \gamma$.
\end{enumerate}
\end{definition}
\begin{remark}
As for the restriction, the cartesian product is also well-defined on \emph{equivalence classes} of block bundles.
\end{remark}

\begin{definition}[Whitney Sum]
Given two equivalences classes of block bundles $u/X$ and $v/X$. We define their \define{Whitney sum} 
\[
(u \oplus v)/X := (u \times v) | \Delta,
\]
where $\Delta = \{ (x,x) \in X \times X\}$ is identified with $X$ by the diagonal map.
\end{definition}

\begin{theorem}[Thm~2.7 \mbox{\cite{BBI}}]
For all $q \ge 0$, there exists a locally finite simplicial complex $B\t{PL}_q$ such that: For every polyhedron $X$ 
there is a bijection
\[
[X,B\t{PL}_q] \to I_q(X).
\]
\end{theorem}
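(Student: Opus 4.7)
\bigskip

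My plan is to follow the Milnor--Brown style construction adapted by Rourke--Sanderson to the block bundle setting. I would build $B\t{PL}_q$ as a semi-simplicial set whose set of $n$-simplices is the collection of (isomorphism classes of) $q$-block bundles $\xi/\Delta^n$ over the standard $n$-simplex, with face operators $\partial_i$ given by restricting to the $i$-th codimension-one face $\partial_i \Delta^n \subset \Delta^n$ (this is well-defined because restriction of block bundles is defined over subcomplexes), and degeneracies given by pulling back along the canonical projections $\Delta^n \to \Delta^{n-1}$. Taking geometric realization and, if needed, passing to a barycentric subdivision yields a locally finite simplicial complex: the set of isomorphism classes of $q$-block bundles over a fixed simplex is countable since each block is homeomorphic to a standard cube and the gluing data along faces is combinatorial.

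Next, I would define the \emph{universal} block bundle $\gamma_q / B\t{PL}_q$ tautologically: over each $n$-simplex $\sigma$ of $B\t{PL}_q$, one places the $q$-block bundle it represents; compatibility along faces is built into the semi-simplicial structure by construction. This produces a bona fide $q$-block bundle, since the combinatorial gluings prescribed by the face relations assemble to an honest cell complex $(E,B\t{PL}_q)$ satisfying the axioms of a block bundle. The two candidate maps are then clear: in one direction, a PL map $f \colon X \to B\t{PL}_q$ (simplicial after subdivision) pulls back $\gamma_q$ to a block bundle on $X$; in the other, given $\xi/K$ with $|K| = X$, one defines a simplicial classifying map $f_\xi \colon K \to B\t{PL}_q$ by sending each cell $\sigma \in K$ to the vertex of $B\t{PL}_q$ indexed by the equivalence class of $\xi|\sigma$ (using the canonical combinatorial trivialization of $\beta \cong \sigma \times [-1,1]^q$ on each block).

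The main technical obstacle is showing that these two assignments are inverses on the level of homotopy classes, and I expect this to have two essentially distinct sub-problems. First, one must verify that two equivalent block bundles $\xi_0, \xi_1$ over $X$ produce PL-homotopic classifying maps: here I would interpolate by constructing a block bundle over $X \times I$ restricting to $\xi_0$ and $\xi_1$ on the two ends (using the subdivision-plus-isomorphism definition of equivalence and gluing via the mapping cylinder of the common subdivision), and then classify that interpolating bundle. Second, and more subtle, one must show conversely that homotopic maps $X \to B\t{PL}_q$ induce equivalent pullbacks; this reduces to a PL bundle-homotopy lemma which in turn uses the uniqueness of regular neighbourhoods and Theorem~\ref{thm_corr_regular_neighborhood_bb} to identify a block bundle over $X \times I$ with its restriction to $X \times \{0\}$ up to block-bundle equivalence.

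Finally, for the surjectivity direction of the bijection (every equivalence class is realized), the definition of $f_\xi$ above, together with the tautological identification of $f_\xi^\ast \gamma_q$ with $\xi$ (block-by-block), provides the required section. Injectivity then follows from the homotopy invariance in the previous paragraph, since $f^\ast \gamma_q \sim g^\ast \gamma_q$ forces $f \simeq g$ by constructing the classifying map of an interpolating bundle on $X \times I$ whose restrictions agree with $f$ and $g$. This completes the reduction of the theorem to the two block-bundle-theoretic lemmas highlighted above, both of which are established in~\cite{BBI}.
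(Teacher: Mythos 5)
First, note that the paper does not prove this statement: it is quoted from Rourke and Sanderson \cite{BBI} as background material in Appendix~\ref{chap_block_bundles}, so there is no internal proof to compare against. Your outline is the standard Rourke--Sanderson strategy (a semi-simplicial object whose $n$-simplices are block bundles over $\Delta^n$, classified by assembly and by a homotopy-invariance lemma for bundles over $X\times I$), and the two reductions you isolate at the end are indeed the key lemmas of \cite{BBI}.

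There is, however, one genuine gap in how you set up the correspondence. You repeatedly invoke a ``pullback'' $f^{*}\gamma_q$ of a universal bundle along an arbitrary PL map $f\colon X\to B\t{PL}_q$. Block bundles do not admit induced bundles along arbitrary maps: a block $\beta$ over $\sigma$ carries no projection $\beta\to\sigma$, so there is no fibre product, and this failure is precisely why Rourke and Sanderson work with $\Delta$-sets (no degeneracies, no universal object to be pulled back in the naive sense). The correct mechanism is an \emph{assembly} map: after subdividing, make $f$ simplicial and glue, over each $n$-simplex of $X$, the block bundle over $\Delta^n$ indexed by its image simplex; well-definedness up to equivalence and independence of the subdivision must then be proved, and this is where the Kan condition on the $\Delta$-set (established via uniqueness of regular neighbourhoods and isotopy extension) enters --- an ingredient your sketch omits but which is needed even to make $[X,B\t{PL}_q]$ the right homotopy set. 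Relatedly, your classifying map ``sending each cell $\sigma$ to the \emph{vertex} indexed by $\xi|\sigma$'' cannot be a simplicial map; it should send each $n$-simplex to the $n$-\emph{simplex} of $B\t{PL}_q$ indexed by the isomorphism class of $\xi|\sigma$ under a chosen identification $\sigma\cong\Delta^n$, and the identification $\beta\cong\sigma\times[-1,1]^q$ is a structure rather than something canonical --- the face compatibilities are exactly what has to be checked. With these repairs your argument becomes the one in \cite{BBI}.
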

\begin{remark}
The above theorem is the PL analogue of the classification of vector bundles over a smooth manifold $M$: equivalences classes of such vector bundles are in bijection with $[M,G_n(\R^{\infty})]$, where $G_n(\R^{\infty})$ is the infinite $n$-Grassmanian.
\end{remark}

\begin{theorem}[Stability of Block Bundles, Cor.~5.2--5.4 \mbox{\cite{BBII}}]
\label{thm_stability_of_bb}
Let $\xi^q/K^k$ be a $q$-block bundle over a $k$-complex $K$.
\begin{enumerate}[(a)]
\item 
There exists $\eta^k / K^k$ such that
\[
\xi^q \iso \eta^k \oplus \epsilon^{q-k}.
\]
\item 
There exists $\eta^k/K^k$ such that 
\[
\xi^q \oplus \eta^k  \iso \epsilon^{q+k}.
\]
\item 
For $\xi^{k+1}/K^k$ and $\eta^{k+1}/K^k$,
\[
 \xi^{k+1}\oplus \epsilon^t   \iso \eta^{k+1}\oplus \epsilon^t \quad \Rightarrow \quad \xi^{k+1}\iso \eta^{k+1}.
 \]
\end{enumerate}
\end{theorem}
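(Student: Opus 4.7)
The overall strategy is to translate each part into a statement about the classifying spaces $B\t{PL}_q$ via the bijection $I_q(K)\iso [|K|,B\t{PL}_q]$ recalled just before the theorem, and then discharge the resulting question by obstruction theory. Write $s_q:B\t{PL}_q\to B\t{PL}_{q+1}$ for the stabilization map classifying $\xi\mapsto \xi\oplus \epsilon^1$; each of (a), (b), (c) will reduce to a connectivity statement about the $s_q$ in the regime $q\geq k$.

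For part (a), iterating the dimension reduction one block at a time reduces matters to showing that, whenever $q-1\geq k$, every class in $[|K^k|, B\t{PL}_q]$ lifts through $s_{q-1}$. The cellular obstructions on an $n$-cell ($n\leq k$) lie in $\pi_{n-1}$ of the homotopy fiber of $s_{q-1}$, which is the PL analogue of $SO(q)/SO(q-1)\simeq S^{q-1}$. One checks by a regular-neighborhood argument that this fiber is at least $(q-2)$-connected, so all the obstruction groups vanish in the range $n\leq k \leq q-1$. Iterating until the fiber dimension drops to $k$ produces the desired $\eta^k$.

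For part (b), first apply (a) to write $\xi^q\iso \eta_1^k\oplus \epsilon^{q-k}$, reducing the task to inverting $\eta_1^k$. Realize $|K|\embeds E(\eta_1^k)$ as the zero section, then embed $E(\eta_1^k)\embeds \R^N$ for $N$ large; the resulting composite PL embedding $|K|\embeds \R^N$ has as regular neighborhood a PL ball. By Theorem~\ref{thm_corr_regular_neighborhood_bb} the corresponding $(N-k)$-block bundle $\nu/K$ satisfies $\eta_1^k\oplus \nu'\iso \nu$, where $\nu'$ is the block normal bundle of $E(\eta_1^k)$ in $\R^N$ restricted to $K$, while $\nu\iso \epsilon^N|K$ because the regular neighborhood is a ball. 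Applying (a) to $\nu'$ and absorbing trivial factors delivers a $k$-block bundle $\eta^k$ with $\xi^q\oplus \eta^k\iso \epsilon^{q+k}$.

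The main obstacle is part (c), the cancellation statement: we need $s_{k+1}^t$ to be \emph{injective} on $[|K^k|,-]$, not merely surjective as in (a). This is controlled by the relative homotopy groups $\pi_n(B\t{PL}_{k+1+t},B\t{PL}_{k+1})$ for $n\leq k+1$, and the long exact sequence of the pair again localizes the question on each fiber $\t{PL}_{k+1+j}/\t{PL}_{k+j}$ for $1\leq j\leq t$. Verifying this geometric connectivity is the delicate step: unlike in the smooth category, where one can lean on transversality in a normal disc bundle, here one must produce explicit PL isotopies between block bundles whose stabilizations agree by hand, using the engulfing theorem (Theorem~\ref{thm_Zeeman}) together with uniqueness of regular neighborhoods to absorb the extra trivial $\epsilon^t$ factor.
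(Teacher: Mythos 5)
The paper does not actually prove this statement: it is quoted verbatim from Rourke--Sanderson \cite{BBII} (their Corollaries 5.2--5.4), so there is no internal proof to compare against. Your outline does follow the same strategy as the original source (classifying spaces $B\t{PL}_q$ plus stability of the maps $s_q$), but as a proof it has two genuine gaps. First, the entire mathematical content of (a) and (c) is the connectivity assertion $\pi_j(\t{PL}_{q+1},\t{PL}_q)=0$ for $j\le q$; you assert it in (a) via an unspecified ``regular-neighborhood argument'' and explicitly defer it in (c). In \cite{BBII} this is Theorem~5.1, and its proof is the substance of that section: it rests on the block-bundle transversality theorem (the main theorem of that paper), not on engulfing or uniqueness of regular neighborhoods. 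A proof that leaves this as a black box has not proved the theorem; it has restated it in the language of classifying spaces.

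Second, your argument for (b) contains a false step. A regular neighborhood of $|K|\embeds\R^N$ is a compact manifold that collapses to $|K|$; it is a PL ball only when $|K|$ is collapsible (for $K=S^1\subset\R^3$ it is a solid torus), so the identification $\nu\iso\epsilon^N|K$ does not follow. Moreover, for a general $k$-complex $K$ the total space $E(\eta_1^k)$ need not be a manifold, so ``the block normal bundle of $E(\eta_1^k)$ in $\R^N$'' is not supplied by Theorem~\ref{thm_corr_regular_neighborhood_bb}, which applies to manifold pairs of codimension at least $3$. The standard route to (b) is instead through the classifying space: once (a) and the stability of the $s_q$ are in hand, the stable classes $[|K|,B\t{PL}]$ form a group (a homotopy-associative $H$-space with $\pi_0$ a group has homotopy inverses), so a stable inverse of $\xi$ exists; destabilize it to a $k$-bundle $\eta^k$ by (a), and then $\xi^q\oplus\eta^k$ is a stably trivial $(q+k)$-bundle over a $k$-complex, hence trivial by (c). As written, your (b) would need to be replaced by an argument of this kind.
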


\begin{definition}[Transversality]
\label{def_transversality}
Let $M,N\subset Q$ be compact proper submanifolds, and $\xi/M \subset Q$ a block bundle over $M$ in $Q$. (In particular, $E(\xi)$ is a regular neighborhood of $M\subset Q$.)
\begin{enumerate}[(a)]
\item 
We say that $N$ is \define{transverse to $M$ with respect to $\xi$} if there exists a subdivision $\xi'$ of $\xi$ such that 
\[
N \cap E(\xi) = E(\xi' | N \cap M). 
\]
\item
We say that $N$ is \define{locally} transverse to $M$ with respect to $\xi$ if this is true near $M$, i.e., there exists a subdivision $\xi'$ of $\xi$ and a regular neighborhood $U$ of $M$ with 
\[
N \cap \xi \cap U = E(\xi' | N\cap M) \cap U.
\]
\end{enumerate} 
\end{definition}

\begin{theorem}[Thm~1.1 in \mbox{\cite{BBII}}]
\label{thm_transversality}
Suppose $N,M \subset Q$ are proper submanifolds, and $\xi$ is a block bundle on $M \subset Q$. 
\begin{enumerate}[(a)]
\item 
There is an $\epsilon$-isotopy of $Q$ carrying $N$ locally transverse to $M$ with respect to $\xi$.
\item 
If $\boundary N$ is already locally transvered to $\boundary M$ with respect to $\xi|\boundary M$, then the isotopy may be taken $\mathrm{mod}\; \boundary Q$.
\end{enumerate}
\end{theorem}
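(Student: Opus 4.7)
The plan is to reduce to a local problem block by block: put $N$ into PL general position with respect to $M$ by a small ambient isotopy, then work cell by cell over the base complex of $\xi$, adjusting $N$ inside each block so that $N$ meets that block as a sub-block. Since the block structure is local and blocks can be made arbitrarily small by subdivision, the concatenated adjustments form an $\epsilon$-isotopy. First I would invoke standard PL general position (e.g.\ \cite[Thm.~5.4]{Rourke:Introduction-to-piecewise-linear-topology-1982}): by an arbitrarily small ambient PL isotopy of $Q$, move $N$ so that $L := N \cap M$ becomes a PL submanifold of both $N$ and $M$ of the expected codimension $\dim Q - \dim N$ in $M$. By compactness, this isotopy may be chosen with support in any preassigned neighbourhood of $M$, in particular away from $\boundary Q$ when needed for part~(b).

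Next I would choose a common subdivision $K'$ of the base complex $K$ (with $|K|=M$) fine enough that $L$ is a full subcomplex and each block $\beta(\sigma) \cong \sigma \times I^q$ of the subdivided bundle $\xi'$ has diameter less than $\epsilon$. The target is then: modify $N$ so that for every $\sigma \in K'$ contained in $L$ one has $N \cap \beta(\sigma) = \sigma \times I^q$, and $N \cap \beta(\sigma) = \emptyset$ whenever $\sigma \cap L = \emptyset$. This is exactly the condition $N \cap E(\xi) = E(\xi'|L)$ demanded by Definition~\ref{def_transversality}. I would proceed by induction on $\dim \sigma$: at the inductive step, over a cell $\sigma \subseteq L$ whose boundary blocks have already been placed in product form, the intersection $N \cap \beta(\sigma)$ already coincides with the desired sub-block on $\boundary \beta(\sigma)$. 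I would then apply the uniqueness of abstract regular neighbourhoods rel boundary (the content of Theorem~\ref{thm_corr_regular_neighborhood_bb}, applied to $N \cap \sigma$ inside the ball $\beta(\sigma)$) to find a PL isotopy of $\beta(\sigma)$, fixed on $\boundary \beta(\sigma)$, carrying $N \cap \beta(\sigma)$ to the product form. Extending via the PL isotopy extension theorem yields an isotopy of $Q$ supported in $\beta(\sigma)$; the concatenation over all cells of $K'$ is a global $\epsilon$-isotopy. The case when $\sigma$ is not contained in $L$ but $\sigma \cap L$ is a proper face is handled analogously, by making the sub-block equal to $(N \cap \sigma)\times I^q$.

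For part~(b), I would start with the already-given transversality on $\boundary M$ and carry out the entire inductive construction inside $Q \setminus (\text{a small collar of }\boundary Q)$, so that the resulting isotopy is automatically mod $\boundary Q$; the transversality over $\boundary M$ serves as the base case of the induction. The main obstacle is the relative extension inside each block in Step~3: moving $N \cap \beta(\sigma)$ into product form \emph{while keeping the already-prepared boundary fixed} is a non-trivial relative PL result about proper submanifolds of a PL ball, and weaving it consistently through the induction so that nothing slips out of place on already-processed faces requires care. In the block-bundle framework this relative step is precisely the uniqueness of abstract regular neighbourhoods rel boundary, which is the main technical ingredient of \cite{BBI}; completing the full proof is what Rourke–Sanderson carry out as \cite[Thm.~1.1]{BBII}.
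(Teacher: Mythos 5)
The paper does not prove this statement: it is quoted verbatim from Rourke--Sanderson \cite{BBII} (their Theorem~1.1) and used as a black box, so there is no internal proof to compare yours against. Judged on its own terms, your outline reproduces the right general shape of their argument (subdivide the base, induct over cells, straighten $N$ inside each block rel the already-processed faces), but it contains a genuine gap at the very first step. You claim that standard PL general position \cite[Thm.~5.4]{Rourke:Introduction-to-piecewise-linear-topology-1982} lets you move $N$ so that $L=N\cap M$ ``becomes a PL submanifold of both $N$ and $M$ of the expected codimension.'' PL general position only bounds the dimension of $N\cap M$ by $n+m-q$; it does not make the intersection a manifold, and in codimension $\geq 3$ no small isotopy need achieve this in the naive sense. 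Making $N\cap M$ into a manifold with a normal (block) bundle structure is essentially \emph{equivalent} to the transversality statement being proven --- the failure of naive PL transversality is precisely why Rourke and Sanderson introduced block transversality in the first place. So your Step~1 assumes a form of the conclusion.

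This gap propagates into the inductive step: to apply uniqueness of (abstract) regular neighbourhoods inside a block $\beta(\sigma)$, you must already know that $N\cap\beta(\sigma)$ \emph{is} a regular neighbourhood of $N\cap\sigma$ in $N$ (i.e.\ that it collapses onto $N\cap\sigma$, and that $N\cap\sigma$ is itself a reasonable polyhedron meeting $\partial\beta(\sigma)$ correctly), none of which follows from general position alone. In Rourke--Sanderson's actual proof the local straightening over a single cell is a separate, substantial lemma, and the induction is arranged so that the needed regular-neighbourhood structure is produced, not presupposed. You do flag the relative straightening as the ``main technical ingredient'' and defer it to \cite{BBII}, which is fair, but the appeal to general position in Step~1 is an error rather than a deferral, and as written the argument would not go through. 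Since the paper itself treats the theorem as an external citation, the cleanest fix is simply to do the same.
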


\begin{definition}[Tangent Bundles]
Given a manifold $M$, the regular neighborhood of $\Delta_M \subset M \times M$ defines a block bundle. Its equivalence class, denoted $t(M)$, is called the \define{tangent bundle} of $M$.
\end{definition}

We have the following analogue of the decomposition of the tangent vector bundle over a submanifold in the smooth category (it  decomposes as the tangent bundle $\oplus$ the normal bundle):
\begin{theorem}[Prop.~5.5 and Cor.~5.6 in \mbox{\cite{BBII}}]
\label{thm_tangent_normal}
\begin{enumerate}[(a)]
\item 
Let $M$ be a manifold wiht $\xi /M$ a block bundle over $M$. (In particular, $E(\xi)$ is an abstract regular neighborhood of $M$.)  Then
\[
t(E(\xi)) | M \iso ( \xi \oplus t(M) ) / M. 
\]
\item 
Let $M \subset Q$ be a compact submanifold, and $u$ the class of any normal block bundle on $M$ in $Q$, then
\[
( t(M) \oplus u ) / M \iso t(Q) |M.
\]
\end{enumerate}
\end{theorem}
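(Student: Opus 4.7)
The plan is to prove part (a) by a block-by-block construction using the product structure of $\xi$, and to deduce (b) from (a) using locality of the tangent bundle. For (b), assume (a) and let $u/M$ be a normal block bundle to the compact submanifold $M\subset Q$. By Theorem~\ref{thm_corr_regular_neighborhood_bb}, the total space $E(u)$ is a regular neighborhood of $M$ in $Q$, so $E(u)$ and $Q$ agree on a neighborhood of $M$; hence regular neighborhoods of $\Delta_M$ in $Q\times Q$ and in $E(u)\times E(u)$ are PL-equivalent, giving $t(Q)|M \iso t(E(u))|M$. Applying (a) to $u/M$ then yields $t(E(u))|M \iso u\oplus t(M)$, as required.

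For (a), the statement is the PL analogue of the classical smooth identity $T(E)|_M \cong TM\oplus E$ for a vector bundle $E\to M$. Both $t(E(\xi))|M$ and $\xi\oplus t(M)$ are $(m+q)$-block bundles over $M$ (with $m=\dim M$ and $\xi=\xi^q$), so the task is to exhibit an isomorphism. I build it block-by-block over cells $\sigma\in K$ (where $\xi/K$), using the PL product structure $\beta_\sigma\iso\sigma\times I^q$ of each block of $\xi$. Near $(\sigma,\sigma)\subset\Delta_M$, the ambient $E(\xi)\times E(\xi)$ factors locally as $(M\times M)\times(I^q\times I^q)$, with $\Delta_{E(\xi)}$ identified with $\Delta_M\times\Delta_{I^q}$ and $\Delta_\sigma\subset\Delta_M$ identified with $\Delta_\sigma\times\{(0,0)\}$. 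I then choose the block of $t(E(\xi))|M$ over $\sigma$ in the split form $B_\sigma := T_\sigma\times N$, where $T_\sigma\subset M\times M$ is a block of $t(M)$ over $\sigma$ (a regular neighborhood of $\Delta_\sigma$ in $M\times M$, of dimension $n+m$) and $N\subset I^q\times I^q$ is a $q$-dimensional disk, a fiber of $\xi$ embedded transverse to $\Delta_{I^q}$. The $T_\sigma$-factor contributes a block of $t(M)$ over $\sigma$; the $N$-factor, being a normal slice to the diagonal $\Delta_{I^q}$, reproduces the $I^q$-fiber of $\xi$ over $\sigma$. Thus on each cell, $B_\sigma$ splits canonically, matching the Whitney sum $\xi\oplus t(M) = (\xi\times t(M))|\Delta$.

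The main obstacle is \emph{globalization}: the product decompositions $B_\sigma = T_\sigma\times N$ must be chosen coherently across all cells of $K$ to assemble into a single block-bundle isomorphism $t(E(\xi))|M\iso\xi\oplus t(M)$, respecting the nontrivial twisting of $\xi$ along face-incidences. I handle this by induction on the skeleta of $K$: given a compatible decomposition on the $k$-skeleton, the extension across a new $(k+1)$-cell uses uniqueness of regular neighborhoods up to ambient PL isotopy (fixing the boundary) to align an a priori choice with the previously constructed pieces on the cell's boundary. This face-inductive gluing is precisely what the Rourke--Sanderson block-bundle formalism \cite{BBI,BBII} was built to handle, and is the method they use to establish analogous Whitney-sum identities for tangent block bundles throughout BBI--BBIII.
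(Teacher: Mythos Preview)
The paper does not give its own proof of this theorem: it is stated in Appendix~\ref{chap_block_bundles} purely as a citation of Prop.~5.5 and Cor.~5.6 from Rourke--Sanderson \cite{BBII}, with no accompanying argument, and is then used as a black box in the proofs of Propositions~\ref{prop_s_k_stab_trivial} and~\ref{prop_s_k_in_M_pi_manifold}. So there is no in-paper proof to compare your proposal against.

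On the substance of your argument: the deduction of (b) from (a) via locality of the tangent block bundle is correct and is indeed how Cor.~5.6 is obtained from Prop.~5.5 in \cite{BBII}. For (a), the overall strategy---split the block over each cell into a tangential and a fiber factor, then glue by induction on skeleta using uniqueness of regular neighborhoods---has the right shape, but there is a slip in your local picture. You write that near $(\sigma,\sigma)$ the ambient $E(\xi)\times E(\xi)$ ``factors locally as $(M\times M)\times(I^q\times I^q)$''. A block bundle only gives a product structure $(\beta_\sigma,\sigma)\iso(I^{n+q},I^n)$ over each individual cell $\sigma$; it does \emph{not} furnish a local trivialization of $E(\xi)$ as $M\times I^q$ over a neighborhood of $\sigma$ in $M$. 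The block of $t(E(\xi))|M$ over $\sigma$ is a regular neighborhood of $\Delta_\sigma$ inside $E(\xi)\times E(\xi)$, and that neighborhood sees cells of $K$ adjacent to $\sigma$, not just $\beta_\sigma\times\beta_\sigma$. Consequently your proposed block $B_\sigma=T_\sigma\times N$ does not a priori sit inside $E(\xi)\times E(\xi)$ in the way you describe, and the claimed splitting is not yet justified. Rourke and Sanderson handle this in \cite{BBII} via their machinery for normal block bundles of a composite inclusion (here $\Delta_M\subset\Delta_{E(\xi)}\subset E(\xi)\times E(\xi)$); your skeletal induction would need to be made considerably more precise---in particular, to say exactly which regular neighborhoods you are taking and why they restrict compatibly on faces---before it substitutes for that machinery.
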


\begin{proposition}[Tangent bundle of a sphere is stably trivial]
\label{prop_s_k_stab_trivial}
Let $S^k$ be a $k$-sphere, then
\[
t(S^k) \oplus \epsilon^1 \iso \epsilon^{k+1}
\]
\end{proposition}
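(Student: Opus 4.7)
The plan is to realize $S^k$ as a codimension-$1$ submanifold of a contractible $(k+1)$-manifold whose tangent bundle is manifestly trivial, then apply the tangent-plus-normal decomposition of Theorem~\ref{thm_tangent_normal}(b). Specifically, I would embed $S^k \subset \R^{k+1}$ as the unit sphere and take $Q := D^{k+1}_{2} = \{x \in \R^{k+1}: |x|\leq 2\}$ as the ambient compact $(k+1)$-ball, in whose interior $S^k$ is a properly embedded submanifold.

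First I would exhibit a trivial normal block bundle $u \cong \epsilon^1/S^k$ of $S^k$ in $Q$. For this, note that the annular neighborhood $N := \{x \in \R^{k+1} : 1-\epsilon \leq |x| \leq 1+\epsilon\}$ is a regular neighborhood of $S^k$ in $Q$, and the polar-coordinate homeomorphism $x \mapsto (x/|x|,\,|x|-1)$ identifies $N$ with $S^k \times [-\epsilon,\epsilon]$. Under the correspondence between regular neighborhoods and normal block bundles (Theorem~\ref{thm_corr_regular_neighborhood_bb}), this exhibits $u$ as the trivial $1$-block bundle $\epsilon^1/S^k$.

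Next, I would invoke Theorem~\ref{thm_tangent_normal}(b) for the pair $S^k \subset Q$ to obtain
\[
t(S^k) \oplus \epsilon^1 \;\cong\; t(S^k) \oplus u \;\cong\; t(Q)\,|\,S^k.
\]
So it remains to verify that $t(Q)$ is the trivial $(k{+}1)$-block bundle on $Q$. This is where I expect the only mild subtlety to lie: since $Q = D^{k+1}_2$ is a ball and hence contractible, the classification theorem giving the bijection $I_{k+1}(Q) \leftrightarrow [Q, B\widetilde{PL}_{k+1}]$ implies that every $(k{+}1)$-block bundle on $Q$ is equivalent to the trivial one. In particular $t(Q)\cong \epsilon^{k+1}/Q$, and restricting to $S^k$ gives $t(Q)\,|\,S^k \cong \epsilon^{k+1}/S^k$.

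Combining the two isomorphisms yields $t(S^k) \oplus \epsilon^1 \cong \epsilon^{k+1}/S^k$, as required. The main conceptual point is the PL analogue of the familiar smooth argument that $TS^k \oplus \nu_{S^k/\R^{k+1}} = T\R^{k+1}|_{S^k}$ is trivial, with the role of the contractible ambient space played by any compact ball containing $S^k$ in its interior; the block-bundle technology of \cite{BBI,BBII} (in particular the tangent-plus-normal formula and the contractibility-based classification of block bundles over a ball) packages this cleanly.
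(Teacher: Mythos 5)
Your argument is essentially the paper's own proof: embed $S^k$ in Euclidean $(k{+}1)$-space, observe that the product (bicollar) neighborhood gives a trivial normal block bundle, and apply Theorem~\ref{thm_tangent_normal}(b) together with the triviality of the ambient tangent bundle; your use of a compact ball $D^{k+1}_2$ in place of $\R^{k+1}$ and the contractibility argument for $t(Q)\cong\epsilon^{k+1}$ are just slightly more careful versions of the same steps. One small caveat: Theorem~\ref{thm_corr_regular_neighborhood_bb} as stated requires codimension at least $3$, so it does not literally apply here, but this is harmless since your explicit identification $N\cong S^k\times[-\epsilon,\epsilon]$ already exhibits the trivial $1$-block bundle structure directly.
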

\begin{proof}
Let us consider $S^k \subset \R^{k+1}$. By the existence of bicollars, any normal bundle of $S^k \subset  \R^{k+1}$ is isomorphic to $\epsilon^1 /S^k$.

Then, by Theorem~\ref{thm_tangent_normal},
\[
\epsilon^1 \oplus t(S^k ) = t(\R^{k+1}) | S^k = \epsilon^{k+1}/S^k. 
\qedhere
\]
\end{proof}

\begin{proposition}[PL analogue of Thm~2 in \cite{Milnor}]
\label{prop_s_k_in_M_pi_manifold}
Let $S^k$ be a $k$-sphere embedded in an $m$-manifold $M^m$, with 
\[
2k + 1 \leq m.
\]
Assume that the tangent bundle of $M$ is \emph{stably trivial}, i.e., there exists $l\ge 0$ such that
\[
t(M) \oplus \epsilon^l \iso \epsilon^{m+l}.
\]
Then any normal bundle over $S^k$ in $M$ is trivial.
\end{proposition}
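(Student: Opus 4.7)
The plan is to reduce the proposition to the stable-triviality-implies-triviality statement in Theorem~\ref{thm_stability_of_bb}(c), in the same spirit as the classical smooth proof where one invokes that the stabilisation map $BO(q) \to BO$ is an isomorphism on $\pi_k$ for $q \ge k+1$. First, let $\nu = \nu^{m-k}/S^k$ denote any normal block bundle of $S^k$ in $M$. By Theorem~\ref{thm_tangent_normal}(b), $t(M)|S^k \iso t(S^k) \oplus \nu$. Restricting the stable trivialisation $t(M)\oplus \epsilon^l \iso \epsilon^{m+l}$ to $S^k$ therefore yields
\[
t(S^k) \oplus \nu \oplus \epsilon^l \iso \epsilon^{m+l}/S^k.
\]
Combining this with Proposition~\ref{prop_s_k_stab_trivial} (the tangent bundle of a sphere is stably trivial), i.e.\ $t(S^k)\oplus \epsilon^1 \iso \epsilon^{k+1}$, one obtains after adding $\epsilon^1$ to both sides the stable triviality of $\nu$:
\[
\nu \oplus \epsilon^{l+k+1} \iso \epsilon^{(m-k) + (l+k+1)}/S^k.
\]

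Next, I want to peel off an $\epsilon$-summand from $\nu$ so as to land in the range where Theorem~\ref{thm_stability_of_bb}(c) applies. Since $S^k$ is a $k$-complex and the hypothesis $2k+1\le m$ gives $m-k \ge k+1$, Theorem~\ref{thm_stability_of_bb}(a) allows me to write $\nu \iso \eta^k \oplus \epsilon^{m-2k}$ for some $k$-block bundle $\eta^k/S^k$. Setting $\xi^{k+1} := \eta^k \oplus \epsilon^1$, this can be reorganised as
\[
\nu \iso \xi^{k+1} \oplus \epsilon^{m-2k-1},
\]
where the exponent $m-2k-1$ is nonnegative precisely because of the hypothesis $2k+1\le m$. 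The stable triviality of $\nu$ established above then pushes through to $\xi^{k+1}$: there exists $t$ with $\xi^{k+1} \oplus \epsilon^t \iso \epsilon^{k+1+t}/S^k$.

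Finally, $\xi^{k+1}$ is a $(k+1)$-block bundle over the $k$-complex $S^k$, so Theorem~\ref{thm_stability_of_bb}(c) (applied with $\eta^{k+1}= \epsilon^{k+1}$) upgrades stable equivalence to equivalence: $\xi^{k+1} \iso \epsilon^{k+1}/S^k$. Plugging back in, $\nu \iso \epsilon^{k+1}\oplus \epsilon^{m-2k-1} \iso \epsilon^{m-k}/S^k$, so $\nu$ is trivial. The main subtlety is arranging the splitting so that both summands have compatible dimensions; the role of the codimension hypothesis $2k+1 \le m$ is exactly to ensure that, after extracting a trivial summand of dimension $m-2k-1$, the remaining factor has fibre dimension $k+1$, landing precisely in the case of Theorem~\ref{thm_stability_of_bb}(c).
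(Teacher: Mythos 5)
Your proposal is correct and follows essentially the same route as the paper's proof: restrict the stable trivialisation of $t(M)$ to $S^k$, use Theorem~\ref{thm_tangent_normal} and Proposition~\ref{prop_s_k_stab_trivial} to deduce stable triviality of the normal bundle, then destabilise via Theorem~\ref{thm_stability_of_bb}(a) and (c), with the hypothesis $2k+1\le m$ entering exactly where you say it does. The only (cosmetic) difference is that you add an explicit $\epsilon^1$ before invoking the stable triviality of $t(S^k)$, whereas the paper simply assumes $l\ge 1$.
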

\begin{proof}
Let $\xi^{m-k}/S^k$ be a normal bundle of $S^k \subset M$. 

Let us form $M \times [-1,1]^l$ (i.e., $\epsilon^l / M$, note that we can always assume $l\ge 1$).

A normal bundle of $S^k \subset M \times [-1,1]^l$ is
\[
\xi^{m-k} \oplus \epsilon^{l}.
\]
By Theorem~\ref{thm_tangent_normal}  (for the first and second equalities),
\begin{align*}
( t(S^k) \oplus \xi^{m-k} \oplus \epsilon^{l} ) / S^k 
&=
t(M\times [-1,1]^l) | S^k
\\
&=
( t(M) \oplus \epsilon^l ) | S^k
\\
&= \epsilon^{m+l} | S^k.
\end{align*}

By Theorem~\ref{prop_s_k_stab_trivial} 
\[
( \xi^{m-k} \oplus t(S^k) \oplus \epsilon^l  ) /S^k 
= 
( \xi^{m-k} \oplus \epsilon^{k+l} ) / S^k. 
\]
Combining the last two equations (we do not write ``$/S^k$'' anymore)
\[
 \xi^{m-k} \oplus \epsilon^{k+l}  
=
\epsilon^{m+l} .
\]
By Theorem~\ref{thm_stability_of_bb}, there exists $\eta^k/S^k$ such that
\[
\xi^{m-k} = \eta^{k} \oplus \epsilon^{m-2k}
\]
So, by the last two equations,
\[
 \eta^{k} \oplus \epsilon^{m-2k + k+ l}  = \epsilon^{m+l}.
\]
Again by Theorem~\ref{thm_stability_of_bb}, this implies
\[
\eta^k \oplus \epsilon^1 = \epsilon^{k+1}
\]
So
\[
\xi^{m-k} = \eta^k \oplus \epsilon^{m-2k} = \epsilon^{m-k},
\]
since $m-2k\ge 1$.
\end{proof}

\bibliographystyle{alpha}
\bibliography{EliminatingMultiplePoints}

\end{document}